\newcommand{\Z}{\mathbb{Z}}
\newcommand{\C}{\mathbb{C}}
\newcommand{\s}{\sigma}
\newcommand{\D}{\partial}
\newcommand{\op}{\oplus}
\newcommand{\bu}{\bm{u}}
\newcommand{\bv}{\bm{v}}
\newcommand{\bw}{\bm{w}}
\newcommand{\T}{\mathcal{T}}
\renewcommand{\atop}[2]{\genfrac{}{}{0pt}{}{#1}{#2}}
\newtheorem{theorem}{Theorem}[section] 
\newtheorem{lemma}[theorem]{Lemma}     
\newtheorem{definition}[theorem]{Definition}
\begin{document}

\title{Abelian functions associated with genus three algebraic curves}

\author{J.C.~Eilbeck, M.~England and Y.~\^Onishi}

\date{July 2011}

\maketitle

\begin{abstract}
We develop the theory of Abelian functions associated with algebraic curves.  The growth in computer power and an advancement of efficient symbolic computation techniques has allowed for recent progress in this area.  In this paper we focus on the genus three cases, comparing the two canonical classes of hyperelliptic and trigonal curves.  We present new addition formulae, derive bases for the spaces of Abelian functions and discuss the differential equations such functions satisfy.
\end{abstract}

\section{Introduction} \label{SEC_Intro}

In this paper we present addition formulae and differential equations
satisfied by the Abelian functions with poles along the standard theta
divisor, associated with the hyperelliptic and trigonal curves of
genus three.  Alongside the presentation of the new results, this
paper offers a comparison of the formulae for the two canonical types
of genus three curves.

The most important new results are the addition formulae presented in
Theorems \ref{thm:34_3t3v} and \ref{thm:34_4t2v}.  The first of these
was obtained by following the ideas in \cite{eemop07} while the second
requires the explicit derivation of a basis for the vector space of
Abelian functions having poles of order at most four along the
standard theta divisor.  They both follow from the recent work in
\cite{emo11} which introduced new addition formulae for the
Weierstrass $\wp$ and $\sigma$-functions along with generalisations to
genus two.  We also present and discuss the sets of differential
equations satisfied by the functions.  These results can be seen as a
continuation of the work started in \cite{eemop07} and \cite{bel97}
using new efficient computational techniques first introduced in
\cite{MEe09}.

Curves of genus three can be categorised as either hyperelliptic or
trigonal.  We study two canonical examples, the $(2,7)$ and the
$(3,4)$-curves which are hyperelliptic and trigonal respectively.  We
define these terms and the curves formally in the next section.  We
develop the results of \cite{eemop07} for the $(3,4)$-curve,
completing the key sets of differential equations and deriving new
addition formulae.  We compare these results with the corresponding
genus three hyperelliptic results for the $(2,7)$-curve, furthering
the results in \cite{bel97} for this case.

Many of the results we present can be viewed as generalisations of
classic results for elliptic functions.  We will conclude the
introduction below by reminding the reader of the relevant results
from the theory of Weierstrass functions.  Then in Section
\ref{SEC_Construct} we give the definitions of the Abelian functions,
discussing the general properties they satisfy.  We proceed in Section
\ref{SEC_Bases} to consider the problem of determining bases for the
vector spaces of such functions, presenting explicit constructions of
these for the genus three curves.  In Section \ref{SEC_DE} we derive
sets of differential equations satisfied by the functions and compare
the two canonical genus three cases.  Finally is Section \ref{SEC_AF}
we present the new addition formulae.

The classical results for Weierstrass elliptic function form a template for our theory.  Let $\wp(u)$ be the \textit{Weierstrass $\wp$-function}, which as an elliptic function has two complex periods $\omega_1,\omega_2$:
\begin{equation} \label{eq:Intro_period}
\wp(u + \omega_1) = \wp(u + \omega_2) = \wp(u), \qquad \text{for all } \, u \in \C.
\end{equation}
The $\wp$-function has the simplest possible pole structure for an elliptic function and satisfies a number of interesting properties.  For example, it can be used to parametrise an elliptic curve,
\begin{equation} \label{eq:Intro_ec}
y^2 = 4x^3 - g_2x - g_3,
\end{equation}
where $g_2$ and $g_3$ are constants.  It also satisfies the following well-known differential equations,
\begin{align}
\big(\wp'(u)\big)^2 &= 4\wp(u)^3 - g_2\wp(u) - g_3,            \label{eq:Intro_elliptic_diff1}     \\
           \wp''(u) &= 6\wp(u)^2 - \textstyle \frac{1}{2}g_2.  \label{eq:Intro_elliptic_diff2}
\end{align}
Weierstrass introduced an auxiliary function, $\sigma(u)$, in his theory which satisfies
\begin{align}
\wp(u) &= - \frac{d^2}{d u^2} \log \big[ \sigma(u) \big].  \label{eq:Intro_elliptic_ps}
\end{align}
The $\sigma$-function plays a crucial role in the generalisation and in applications of the theory.  It satisfies the following two term addition formula,
\begin{eqnarray}
- \frac{\sigma(u+v)\sigma(u-v)}{\sigma(u)^2\sigma(v)^2} = \wp(u) - \wp(v). \label{eq:Intro_elliptic_add}
\end{eqnarray}
In this document we present generalisations of equations
(\ref{eq:Intro_period})$-$(\ref{eq:Intro_elliptic_add}) for higher genus functions.

Elliptic functions have been the subject of much study since their
discovery and have been extensively used to enumerate solutions of
non-linear wave equations.  Recent times have seen a revival of
interest in the theory of Abelian functions, which have multiple
independent periods, and so generalise the elliptic functions.  The
periodicity property is usually defined in association with an
underlying algebraic curve.  These functions have been shown to solve
differential equations arising in mathematical physics
and have been used in a variety of applications.

\section{Constructing Abelian functions} \label{SEC_Construct}

In this paper we study in detail the case of functions associated with genus three curves.  However, much of the theory is applicable to a wider space of curves and so we will include some general definitions.

\begin{definition} \label{def:HG_general_curves}
For two coprime integers $(n,s)$ with $s>n$ we define an \textbf{$\bm{(n,s)}$-curve} as a non-singular algebraic curve defined by $f(x,y)=0$, where
\begin{equation} \label{eq:general_curve}
f(x,y) = y^n + p_1(x)y^{n-1} + p_{2}(x)y^{n-2} + \cdots + p_{n-1}(x)y - p_{n}(x).
\end{equation}
Here $x$, $y$ are complex variables and $p_j(x)$ are polynomials in $x$ of degree (at most)
$\lfloor{ js } / n \rfloor$.
We define a simple subclass of the curves by setting $p_j(x)=0$ for $0\leq j \leq n-1$.  Such curves are then defined by
\begin{equation} \label{eq:ct_curve}
f(x,y) = y^n - (x^s +\lambda_{s-1}x^{s-1}+\dots+\lambda_{1}x+\lambda_0)
\end{equation}
and are called \textbf{cyclic $\bm{(n,s)}$-curves}.  We follow tradition and denote the curve constants by $\lambda_j$ for the cyclic $(n,s)$-curves and by $\mu_j$ for the general $(n,s)$-curves.
Note that in the literature the word \lq\lq cyclic" is sometimes replaced by \emph{\lq\lq strictly"} or
\emph{\lq\lq purely (\(n\)-gonal)"}.  Also, some authors use a different normalisation with the coefficient of $x^s$ chosen to be $4$ instead of $1$.  It is simple to move between the different normalisations.
\end{definition}
We denote the surface defined by such a curve as $C$.  The genus of $C$ is given by
\[
g=\tfrac{1}{2}(n-1)(s-1)
\]
and the associated functions to be defined shortly will be multivariate with $g$ variables, $\bm{u} = (u_1, \dots, u_g)$.  For example, the elliptic curve in equation (\ref{eq:Intro_ec}) is a (2,3)-curve and the Weierstrass $\sigma$ and $\wp$-functions depend upon a single variable $u$.

Given this equation for the genus we see that it can only be three when $(n,s)=(2,7)$ or $(3,4)$.  Hence the two curves we will investigate in this paper are as follows:
\begin{align}
&y^2+(\mu_1x^3 + \mu_3x^2+\mu_5x+\mu_7)y = x^7+\mu_2x^6+\mu_4x^5+\mu_6x^4+\mu_8x^3+\mu_{10}x^2+\mu_{12}x+\mu_{14},
\label{eq:27}\\
&y^3+(\mu_1x+\mu_4)y^2+(\mu_2x^2+\mu_5x+\mu_8)y = x^4+\mu_3x^3+\mu_6x^2+\mu_9x+\mu_{12}.
\label{eq:34}
\end{align}
Their cyclic restrictions are given by:
\begin{align}
y^2 &= x^7 + \lambda_6x^6 + \lambda_5x^5 + \lambda_4x^4 + \lambda_3x^3 + \lambda_2x^2 + \lambda_1x + \lambda_0,
\label{eq:c27}\\
y^3 &= x^4 + \lambda_3x^3 + \lambda_2x^2 + \lambda_1x + \lambda_0.
\label{eq:c34}
\end{align}
We define a weight for the theory, denoted by $\mathrm{wt}$ and called the {\it Sato weight}.  We start with
\begin{equation}  \label{eq:Sato_weight}
\mathrm{wt}(x)=-n, \ \ \mathrm{wt}(y)=-s
\end{equation}
and then choose the weights of the curve parameters to be such that the
curve equation is homogeneous.  We see that for cyclic curves this
imposes $\mathrm{wt}( \lambda_j )=-n(s-j)$ while for the non-cyclic
curves (\ref{eq:27}) and (\ref{eq:34}) the constants $\mu_j$ have been
labeled with the absolute value of their weights.  The weights of all
other objects may then be derived uniquely and will be commented on
throughout the paper.  Note that all the objects in this paper have a
definite Sato weight and all the equalities are homogeneous with
respect to this weight.  A more detailed discussion of the weight properties may be found for example in \cite{MEe09}.

The $(n,s)$-curves with $n=2$ are generally defined to be \emph{hyperelliptic curves}.  Klein developed an approach to generalise the Weierstrass $\wp$-function to Abelian functions associated with hyperelliptic curves as described in Baker's classic texts \cite{ba97} and \cite{ba07}.   This approach has motivated the general definitions in \cite{bel97} and \cite{eel00} of what we now call \emph{Kleinian $\wp$-functions}.  It is the properties of these and the generalised $\sigma$-function that are our objects of study.

In the last few years a good deal of progress has been made on the theory of Abelian functions associated to those $(n,s)$ curve with $n=3$, which we label \textit{trigonal curves}.  In \cite{bel00}, the authors of \cite{bel97} furthered their methods to the trigonal cases, obtaining realisations of the Jacobian variety and some key differential equations between the functions.  More recently the two canonical trigonal cases of the (3,4) and (3,5)-curves have been examined in \cite{eemop07} and \cite{bego08} respectively.   Both papers explicitly construct the differentials on the curve, solve the Jacobi inversion problem and obtain differential equations between the $\wp$-functions.  Some of the properties of higher genus trigonal curves have been explored in \cite{MEhgt10}.  The class of $(n,s)$-curves with $n=4$ are defined as \textit{tetragonal curves} and have been recently considered for the first time in \cite{MEe09} and \cite{MEg09}.  The lowest genus tetragonal curve has genus six.

We now discuss how to construct these functions, starting by choosing bases for the space of differential forms of the first kind.  A standard basis may be constructed from the Weierstrass gap sequence associated to the curve (see for example \cite{bg06}).  For any $(2,7)$-curve the basis is
\begin{equation} \label{eq:du27}
  \bm{du}=\Big(\frac{dx}{f_y(x,y)}, \frac{xdx}{f_y(x,y)}, \frac{x^2dx}{f_y(x,y)}\Big),
\end{equation}
while for any \((3,4)\)-curve it is
\begin{equation} \label{eq:du34}
  \bm{du}=\Big(\frac{dx}{f_y(x,y)}, \frac{xdx}{f_y(x,y)}, \frac{ydx}{f_y(x,y)}\Big).
\end{equation}
We choose a symplectic basis in \(H_1(C,\mathbb{Z})\) of cycles (closed paths) upon the compact Riemann surface defined by $C$.  We denote these by
$\{\alpha_1,\ \alpha_2,\ \alpha_3,\ \beta_1,\ \beta_2,\ \beta_3\}$
and ensure they have intersection numbers
\begin{eqnarray*}
\alpha_i \cdot \alpha_j = 0, \qquad \beta_i \cdot \beta_j = 0, \qquad
\alpha_i \cdot \beta_j 
= \begin{cases}
1 \ & \text{if}\ \ i = j, \\
0 \ & \text{if}\ \ i \neq j.
\end{cases}
\end{eqnarray*}
We introduce $\bm{dr}$ as a basis of meromorphic differentials which have their only pole at $\infty$.  We will not explicitly use these in this paper and so for an explicit construction we refer the reader to Section 2.2.2 in \cite{bel97} for the $(2,7)$-case and to equation  (2.22) in \cite{eemop07} for the $(3,4)$-case.
These bases are derived alongside a fundamental differential of the second kind which plays an important role in the theory of $\wp$-functions.  Again, we will not explicitly use the differential in this paper and so refer the reader to
\cite{bg06} for an example of its construction and role.

We can now define the standard period matrices associated to the curve as follows.
\begin{eqnarray*}
\begin{array}{cc}
      \omega'  = \left( \oint_{\alpha_k} du_\ell \right)_{k,\ell = 1,2,3} &
\qquad\omega'' = \left( \oint_{ \beta_k} du_\ell \right)_{k,\ell = 1,2,3}  \\
        \eta'  = \left( \oint_{\alpha_k} dr_{\ell} \right)_{k,\ell = 1,2,3} &
\qquad  \eta'' = \left( \oint_{ \beta_k} dr_{\ell} \right)_{k,\ell = 1,2,3}
\end{array}.
\end{eqnarray*}
We define the period lattice $\Lambda$ formed from $\omega'$, $\omega''$ by
\[
\Lambda = \big\{ \bm{m}\omega' + \bm{n}\omega'', \quad \bm{m},\bm{n} \in \Z^3 \big\}.
\]
The functions we treat in this paper are defined upon \(\mathbb{C}^3\) with coordinates usually given as
\begin{equation}
\bm{u}=(u_1,\ u_2,\ u_3).
\end{equation}
Note that any point $\bu \in \C^3$ can be given by
\[
\bu = \sum_{i=1}^3 \int_{\infty}^{P_i} \bm{du},
\]
where the $P_i$ are points upon $C$.  The period lattice $\Lambda$ is a lattice in the space $\C^3$.  Then the Jacobian variety of $C$ is presented by $\mathbb{C}^3/\Lambda$, and is denoted by $J$.
We define \(\kappa\) as the modulo \(\Lambda\) map,
\begin{equation}
\kappa \ : \ \mathbb{C}^3 \to J.
\end{equation}
For $k=1$, $2$, $\dots$ define $\mathfrak{A}_k$,
the \emph{Abel map} from the $k$-th symmetric product \(\mathrm{Sym}^k(C)\) to $J$.
\begin{align}
\mathfrak{A}_k: \text{Sym}^k(C) &\to     J \nonumber \\
(P_1,\dots,P_k)   &\mapsto
\left( \int_{\infty}^{P_1} \bm{du} + \dots + \int_{\infty}^{P_k} \bm{du} \right) \pmod{\Lambda},
\label{eq:Abel}
\end{align}
where the $P_i$ are again points upon $C$.
Denote the image of the $k$-th Abel map by $W^{[k]}$ and define the \emph{$k$-th standard theta subset}
(often referred to as the $k$-th strata) by
\begin{eqnarray*}
\Theta^{[k]} = W^{[k]} \cup [-1]W^{[k]},
\end{eqnarray*}
where \([-1]\) means that
\begin{eqnarray*}
[-1](u_1, u_2 ,u_3) = (-u_1, -u_2 ,-u_3).
\end{eqnarray*}
When $k=1$ the Abel map gives an embedding of the curve $C$ upon which we define $\xi= x^{-\frac{1}{n}}$ as
the local parameter at the origin, $\mathfrak{A}_1(\infty)$.
We can then express $y$ and the basis of differentials using $\xi$ and integrate to give series expansions for $\bu$.
We can check the weights of $\bu$ from these and see that they are prescribed by the Weierstrass gap sequence.
In particular, $\bu=(u_1,u_2,u_3)$ has weights $(5,3,1)$ in the $(2,7)$-case and weights $(5,2,1$) in the $(3,4)$-case.

We now consider functions that are periodic with respect to the lattice $\Lambda$.

\begin{definition} \label{def:HG_Abelian}
Let $\mathfrak{M}(\bu)$ be a meromorphic function of $\bu \in \C^g$.
Then $\mathfrak{M}$ is a \textbf{standard Abelian function associated with $\bm{C}$} if it has poles only along \(\kappa^{-1}(\Theta^{[g-1]})\), and satisfies
\begin{equation} \label{eq:HG_Abelian}
\mathfrak{M}(\bu + \bm{\ell}) = \mathfrak{M}(\bu) \qquad \text{for all } \bm{\ell}\in \Lambda.
\end{equation}
\end{definition}
\noindent Note the comparison with equation (\ref{eq:Intro_period}) and that the period matrices play the role of the scalar periods in the elliptic case.  We will define generalisations of the Weierstrass $\wp$-function which satisfy
equation (\ref{eq:HG_Abelian}), defined using the quasi-periodic function defined below.

First, let \(\bm{\delta} =
\bm{\delta'}\omega'+\bm{\delta''}\omega''\) be the Riemann constant
with base point \(\infty\).  Then
\(\Big[{\atop{\bm{\delta'}}{\bm{\delta''}}}\Big]\) is the theta characteristic
presenting the Riemann constant for the curve C with respect to the
base point $\infty$ and generators $\{\alpha_j,\ \beta_j\}$ of
$H_1(C,\mathbb{Z})$.  For any $((2,7)$-curve, we have a classical
choice of \(\{\alpha_j,\beta_j\}\) and (see \cite{mu83} or
\cite{christoffel1901}) we have
\begin{equation}
\bm{\delta}'=[\tfrac32\ 1\ \tfrac12]^T, \quad  \bm{\delta}''=[\tfrac12\ \tfrac12\ \tfrac12]^T.
\end{equation}
For any \((3,4)\)-curve, Shiga computed in \cite{shiga88} that with his choice of  \(\{\alpha_j,\beta_j\}\) we have
\begin{equation}
\bm{\delta}'=[0\ \tfrac12\ 0]^T, \quad  \bm{\delta}''=[0\ \tfrac12\ 0]^T.
\end{equation}

\begin{definition} \label{def:HG_sigma}
The \textbf{Kleinian $\bm{\s}$-function} associated to a genus three $(n,s)$-curve may be defined
using a multivariate $\theta$-function with characteristic \(\bm{\delta}\) as
\begin{align*}
\s(\bu) &= c \exp \big( \textstyle \frac{1}{2} \bm{u} \eta' (\omega')^{-1} \bm{u}^T \big)
\cdot \theta[\bm{\delta}]\big((\omega')^{-1}\bm{u}^T \hspace*{0.05in} \big| \hspace*{0.05in} (\omega')^{-1} \omega''\big).  \\
&= c \exp \big( \textstyle \frac{1}{2} \bm{u} \eta' (\omega')^{-1} \bm{u}^T \big) 
\\ & \qquad \times \displaystyle \sum_{\bm{m} \in \Z^3} \exp \bigg[ 2\pi i \big\{                    \label{eq:HG_sigma}
 \textstyle \frac{1}{2} (\bm{m} + \bm{\delta'})^T (\omega')^{-1} \omega''(\bm{m} + \bm{\delta'})
+ (\bm{m} + \bm{\delta'})^T ((\omega')^{-1} \bm{u}^T + \bm{\delta''} )\big\} \bigg].  \nonumber
\end{align*}
The constant $c$ is dependent upon the curve parameters and the basis of cycles and is fixed later,
following Lemma {\rm \ref{lem:sigexp}}.
\end{definition}
We now summarise the key properties of the $\sigma$-function.  See \cite{bel97} or \cite{N10} for the construction of the $\sigma$-function to satisfy these properties.

For any point $\bu \in \mathbb{C}^3$ we denote by $\bu'$ and $\bu''$ the vectors in $\mathbb{R}^3$ such that
\begin{equation}
\bu=\bm{u}'\omega'+\bm{u}''\omega''.
\end{equation}
Therefore a point  $\bm{\ell}\in\Lambda$ is written as
\begin{equation} \label{eq:HG_ell}
\bm{\ell} = \bm{\ell'}\omega' + \bm{\ell''}\omega'' \in \Lambda, \qquad \bm{\ell'},\bm{\ell''} \in \Z^3.
\end{equation}
For $\bu, \bv \in \C^3$ and $\bm{\ell} \in \Lambda$, define $L(\bu,\bv)$ and $\chi(\bm{\ell})$ as follows:
\begin{align*}
L(\bu,\bv) &= \bu\big( \bm{v'}\eta' + \bm{v''}\eta'' \big)^T, \\
\chi(\bm{\ell}) &= \exp \big[ 2 \pi \text{i} \big\{ \bm{\ell'}(\delta'')^T - \bm{\ell''}(\delta')^T
+ \tfrac{1}{2}\bm{\ell'}(\bm{\ell''})^T \big\}\big].
\end{align*}

\begin{lemma}
The $\sigma$-function 
has the following properties.
\begin{itemize}

\item It is an entire function on $\C^3$.

\item It has zeros of order one along the set  $\kappa^{-1}(\Theta^{[2]})$.
Further, $\sigma(\bu) \neq 0$ outside this set.

\item For all $\bu \in \C^3, \bm{\ell} \in \Lambda$ the function $\sigma(\bu)$ has the following
quasi-periodicity property:
\begin{eqnarray} \label{eq:HG_quas}
\sigma(\bu + \bm{\ell}) = \chi(\bm{\ell})
\exp \left[ L \left( \bu + \frac{\bm{\ell}}{2}, \bm{\ell} \right) \right] \sigma(\bu).
\end{eqnarray}

\item It has definite parity given by
$
\sigma([-1]\bu) = \pm\sigma(\bu)
$
in the $(2,7)$ and $(3,4)$-cases respectively.
\end{itemize}

\end{lemma}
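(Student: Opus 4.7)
The plan is to verify each of the four bulleted properties directly from the explicit formula for $\sigma$ in Definition \ref{def:HG_sigma}, using the fact that the prefactor $E(\bu) = \exp\!\big(\tfrac12 \bu\,\eta'(\omega')^{-1}\bu^T\big)$ is nowhere vanishing and holomorphic, so all of the analytic behaviour of $\sigma$ is inherited from the theta-function factor $\theta[\bm\delta]\big((\omega')^{-1}\bu^T \,\big|\, (\omega')^{-1}\omega''\big)$ up to a known entire twist. First, $\sigma$ is entire because $E(\bu)$ is, and the theta series converges absolutely and uniformly on compacta since the Riemann matrix $\tau=(\omega')^{-1}\omega''$ has positive-definite imaginary part; this handles the first bullet.

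For the second bullet, the strategy is to invoke Riemann's vanishing theorem: with the Riemann constant $\bm\delta$ used as the characteristic and the base point $\infty$, the zero locus of $\theta[\bm\delta]\big((\omega')^{-1}\bu^T\big|\tau\big)$ inside $\mathbb C^3$ is exactly $\kappa^{-1}(W^{[g-1]}) = \kappa^{-1}(W^{[2]})$, with simple zeros, and under $\bu\mapsto -\bu$ one picks up the complementary piece $\kappa^{-1}([-1]W^{[2]})$, giving the full $\kappa^{-1}(\Theta^{[2]})$. Since $E(\bu)\ne 0$, these are the zeros of $\sigma$ and they remain simple.

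For the third bullet, I would use the classical quasi-periodicity of the theta function with characteristic: writing $\bm\ell = \bm\ell'\omega' + \bm\ell''\omega''$, translation of $\bu$ by $\bm\ell$ corresponds to translation of $(\omega')^{-1}\bu^T$ by $\bm\ell' + \tau\bm\ell''$, and the standard identity
\[
\theta[\bm a,\bm b](z+\bm m+\tau\bm n;\tau) = e^{2\pi i(\bm a^T\bm m - \bm b^T\bm n)}\,
e^{-\pi i\,\bm n^T\tau\bm n \,-\, 2\pi i\,\bm n^T z}\,\theta[\bm a,\bm b](z;\tau)
\]
produces the root-of-unity factor $\chi(\bm\ell)$. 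The remaining exponential from the theta transform combines with the change in $E$ under $\bu\mapsto \bu+\bm\ell$; using the Legendre relation $\eta'\omega''{}^T - \omega'\eta''{}^T = \tfrac{\pi i}{2}\,I$ one identifies the aggregate exponent with $L\!\left(\bu+\tfrac{\bm\ell}{2},\,\bm\ell\right)$, as claimed in equation \eqref{eq:HG_quas}.

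Finally, the parity: $E(\bu)$ is manifestly even in $\bu$, so parity of $\sigma$ equals parity of the theta factor, and for half-integer characteristics one has $\theta[\bm\delta'{},\bm\delta''](-z;\tau) = (-1)^{4\bm\delta'{}^T\bm\delta''}\theta[\bm\delta'{},\bm\delta''](z;\tau)$. In the $(2,7)$-case $4\bm\delta'{}^T\bm\delta'' = 4(\tfrac34+\tfrac12+\tfrac14)=6$ is even, giving $\sigma(-\bu)=+\sigma(\bu)$; in the $(3,4)$-case $4\bm\delta'{}^T\bm\delta''=4\cdot\tfrac14=1$ is odd, giving $\sigma(-\bu)=-\sigma(\bu)$. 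The main obstacle is the second bullet, which genuinely depends on Riemann's vanishing theorem and the identification of the $\theta[\bm\delta]$-divisor with the Abel image $W^{[g-1]}$; the other three bullets are essentially bookkeeping built on top of the theta-function toolkit and the Legendre relation.
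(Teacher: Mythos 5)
Your proposal is correct, and for the first three bullets it is essentially the route the paper takes: the paper declares entirety clear from the definition and defers quasi-periodicity and the zero divisor to the classical theory of the multivariate theta function (Baker), which is exactly what you make explicit via absolute convergence of the theta series, Riemann's vanishing theorem, and the theta transformation law combined with the generalised Legendre relation and the nonvanishing even prefactor $\exp\big(\tfrac12\bu\eta'(\omega')^{-1}\bu^T\big)$. Where you genuinely differ is the parity: the paper cites Nakayashiki's general result $\sigma(-\bu)=(-1)^{\frac{1}{24}(n^2-1)(s^2-1)}\sigma(\bu)$, valid for every $(n,s)$-curve, whereas you read the parity off the explicit half-integer characteristics via $\theta[\bm{\delta}](-z;\tau)=(-1)^{4\bm{\delta}'\cdot\bm{\delta}''}\theta[\bm{\delta}](z;\tau)$, getting $6$ (even) in the $(2,7)$-case and $1$ (odd) in the $(3,4)$-case; this agrees with the general formula (exponents $6$ and $5$) and buys a self-contained, elementary check, at the cost of depending on the particular choices of homology basis and characteristic quoted in the paper rather than giving a statement uniform in $(n,s)$. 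Two small cautions: your phrase about ``picking up the complementary piece under $\bu\mapsto-\bu$'' is not quite right --- with base point $\infty$, where the canonical class is $(2g-2)\infty$, Riemann's theorem already yields a $[-1]$-symmetric zero locus $\kappa^{-1}(W^{[2]})$, so here $\Theta^{[2]}$ coincides with $W^{[2]}$ and no extra component is being added; and the precise constant in the generalised Legendre relation depends on the normalisation of $\bm{dr}$, so the identification of the aggregate exponent with $L\big(\bu+\tfrac{\bm{\ell}}{2},\bm{\ell}\big)$ should be checked against the paper's conventions rather than the factor $\tfrac{\pi i}{2}I$ you quote.
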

\begin{proof}
The function is clearly entire from the definition, while the quasi-periodicity and zeros of the function are classical results, (see \cite{ba97}), that are fundamental to the definition of the function.  They both follow from the properties of the multivariate $\theta$-function.  The parity property follows from Proposition 4(iv) in \cite{N10} which states that for a general $(n,s)$-curve,
\begin{equation} \label{eq:sigparity}
\sigma(-\bu) = (-1)^{\frac{1}{24}(n^2-1)(s^2-1)}\sigma(\bu).
\end{equation}
\end{proof}
We next define $\wp$-functions using an analogy of equation (\ref{eq:Intro_elliptic_ps}).
Since there is more than one variable we need to be clear which we differentiate with respect to.
We actually define several multivariate $\wp$-functions and introduce the following notation.

\begin{definition} \label{def:nip}
Define \textbf{$\bm{m}$-index Kleinian $\bm{\wp}$-functions}, (where $m\geq2$) as
\begin{eqnarray*}
\wp_{i_1,i_2,\dots,i_m}(\bu) = - \frac{\D}{\D u_{i_1}} \frac{\D}{\D u_{i_2}}\dots \frac{\D}{\D u_{i_m}} \log \big[ \sigma(\bu) \big],
\quad i_1 \leq \dots \leq i_m \in \{1,\dots,g\}.
\end{eqnarray*}
\end{definition}
The $m$-index $\wp$-functions are meromorphic with poles of order $m$ when $\sigma(\bu)=0$.  We can check that they satisfy equation (\ref{eq:HG_Abelian}) and hence they are Abelian.   The $m$-index $\wp$-functions have definite parity with respect to the change of variables $\bu \to [-1]\bu$ and are odd if $m$ is odd and even if $m$ is even.  Note that the ordering of the indices is irrelevant and so for simplicity we always order in ascending values.

When the $(n,s)$-curve is chosen to be the classic elliptic curve then the Kleinian $\sigma$-function coincides with the classic $\sigma$-function and the sole 2-index $\wp$-function coincides with the Weierstrass $\wp$-function.  The only difference is the notation with
\begin{align*}
\wp_{11}(\bu) \equiv \wp(u), \quad \wp_{111}(\bu) \equiv \wp'(u), \quad \wp_{1111}(\bu) \equiv \wp''(u).
\end{align*}
Clearly, as the genus of the curve increases so do the number of associated $\wp$-functions.  For the curves we study we set $g=3$ to leave six 2-index $\wp$-functions, ten 3-index $\wp$-functions etc.  By considering Definition \ref{def:nip} we see that the weight of the $\wp$-functions is the negative of the sum of the weights of the variables indicated by the indices.  So although the two curves have the same number of associated $\wp$-functions, they are of different weights arising from the different basis of holomorphic differentials.  We will find in the following sections that this leads to variations in the theory of the two sets of functions.

We now introduce a final result detailing how the functions can be expressed using series expansions.
\begin{lemma} \label{lem:sigexp}
The Taylor series expansion of $\sigma(\bu)$ about the origin may be written as follows\,{\rm :}
For each type of cyclic genus three curve,
there are constants $K_{2,7}$ and $K_{3,4}$ depending only on the $\lambda_j$ and $\{\alpha_j,\,\beta_j\}$ such that
\begin{equation} \label{eq:HG_SW}
\sigma(\bu)=
\begin{cases}\displaystyle
\displaystyle K_{2,7} \cdot \Big(SW_{2,7}(\bu) + \sum_{k=1}^{\infty} C_{6+2k}(\bu,\bm{\lambda})\Big) & \text{for a cyclic } (2,7)\text{-curve},\\
\displaystyle K_{3,4} \cdot \Big(SW_{3,4}(\bu) + \sum_{k=1}^{\infty} C_{5+3k}(\bu,\bm{\lambda})\Big) & \text{for a cyclic } (3,4)\text{-curve}.\\
\end{cases}
\end{equation}
Here $SW_{n,s}$ is the Schur-Weierstrass polynomial generated by $(n,s)$ and each $C_m$ is a polynomial composed of products of monomials in $\bm{u}$ of weight $m$ multiplied by monomials in the curve parameters of weight $(6-m)$ and $(5-m)$ in the $(2,7)$ and $(3,4)$-cases respectively.
\end{lemma}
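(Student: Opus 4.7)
The plan is to combine three inputs:
(i) $\sigma$ is entire on $\C^3$ by the previous lemma, so it admits a globally convergent Taylor expansion about $\bu = \bm{0}$;
(ii) the construction of $\sigma$ in Definition \ref{def:HG_sigma} is compatible with the Sato weight, so $\sigma$ is homogeneous of a definite Sato weight $W_{n,s}$, namely $W_{2,7} = 6$ and $W_{3,4} = 5$;
(iii) the leading, $\blambda$-independent homogeneous piece of $\sigma$ is the Schur-Weierstrass polynomial $SW_{n,s}(\bu)$ generated by the $(n,s)$ Weierstrass gap sequence.

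Assuming these, I would proceed as follows. First, expand
\[
\sigma(\bu) = \sum_{\alpha \in \Z_{\geq 0}^3} c_\alpha(\blambda)\, \bu^\alpha,
\]
regarding each coefficient $c_\alpha$ as a (holomorphic) function of the curve parameters $\blambda$. Sato weight homogeneity forces $c_\alpha(\blambda)$ to be itself homogeneous of weight $W_{n,s} - \mathrm{wt}(\bu^\alpha)$. Since every $\lambda_j$ has strictly negative Sato weight, each non-zero $c_\alpha$ must be a polynomial in the $\lambda_j$, and only finitely many $\bu$-monomials of a given weight can contribute. Grouping by total $\bu$-weight $m$ and extracting the overall normalising factor $K_{n,s}$ then gives
\[
\sigma(\bu) = K_{n,s}\Big( SW_{n,s}(\bu) + \sum_{m > m_0} C_m(\bu,\blambda) \Big),
\]
where $m_0 = W_{n,s}$ is the weight of $SW_{n,s}$, and each $C_m$ is a polynomial of $\bu$-weight $m$ whose coefficients are $\blambda$-monomials of weight $m_0 - m$.

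The step size in the summation follows from two constraints. First, the parity property $\sigma([-1]\bu) = \pm\sigma(\bu)$ restricts the admissible total $\bu$-degrees. Second, the fact that for a cyclic $(n,s)$-curve every curve-parameter weight lies in $n\Z_{<0}$, explicitly $\mathrm{wt}(\lambda_j) = -n(s-j)$, forces $m - m_0$ to be a positive multiple of $n$. In the $(2,7)$-case this yields $m = 6 + 2k$ with $k \geq 1$, and in the $(3,4)$-case $m = 5 + 3k$ with $k \geq 1$, as claimed.

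The principal obstacle, which I would import rather than re-derive, is the identification in (iii) of the $\blambda$-independent leading piece of $\sigma$ with $SW_{n,s}(\bu)$. This is a non-trivial fact established in \cite{N10} (with antecedents in the work of Buchstaber, Enolskii and Leykin), obtained by realising $\sigma$ as a $\tau$-function on the Sato Grassmannian whose $\blambda \to 0$ specialisation reduces to the Schur function indexed by the Weierstrass gap sequence of the curve. Once this input is granted, the constant $K_{n,s}$ is fixed by demanding that the coefficient of $SW_{n,s}$ in the expansion be unity, and the stated expansion follows.
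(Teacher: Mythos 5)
Your proposal is correct and follows essentially the same route as the paper: the hard content (the $\blambda$-independent leading term of $\sigma$ being the Schur--Weierstrass polynomial $SW_{n,s}$) is imported from \cite{N10}, and the remainder is organised by Sato-weight homogeneity, with finiteness of each $C_m$ coming from the finitely many $\bu$-monomials of a given weight paired with $\blambda$-monomials of the complementary weight. The only difference is that you spell out the step sizes $6+2k$ and $5+3k$ explicitly via $\mathrm{wt}(\lambda_j)=-n(s-j)\in n\Z_{<0}$, a point the paper leaves implicit in the statement.
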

\noindent A similar result may be stated for the non-cyclic curves using $\mu_j$ instead of $\lambda_j$, but the calculations involved in deriving such an expansion are computationally much more difficult.
\begin{proof}
We refer the reader to \cite{N10} for a proof of the relationship between the $\sigma$-function and the Schur-Weierstrass polynomials and note that this was first discussed in \cite{bel99}.  We see that the remainder of the expansion must depend on the curve parameters and split it up into the $C_k$ using the weight properties. Each $C_k$ must be finite since the number of possible terms with the prescribed weight properties is finite. \\
\end{proof}

Expansions of this type were first introduced in \cite{bg06} in relation to the study of the Benney equations.
Since then they have been an integral tool in the investigation of Abelian functions.  Recently, computational techniques based on the weight properties have been used to derive much larger expansions and we refer the reader to \cite{MEe09} and \cite{MEhgt10} for details of how to construct and use these expansions.

We fix $c$ in Definition \ref{def:HG_sigma} to be the value that makes the constant one in the above lemma.  Some other authors working in this area may use a different constant and in general these choices are not equivalent.  However, the constant can be seen to cancel in the definition of the $\wp$-functions, leaving most results independent of $c$.  Note that this choice of $c$ ensures that the Kleinian $\sigma$-function matches the Weierstrass $\sigma$-function when the $(n,s)$-curve is chosen to be the classic elliptic curve.

The connection with the Schur-Weierstrass polynomials also allows us to determine the weight of the $\sigma$-function as $(1/24)(n^2-1)(s^2-1)$.  In the $(2,7)$-case this gives $\sigma(\bu)$ weight $6$ while in the $(3,4)$-case it has weight $5$.  The respective Schur-Weierstrass polynomials are,
\begin{align}
SW_{2,7} &= \textstyle \frac{1}{45}u_{3}^6 - \frac{1}{3}u_{3}^3u_{2} - u_{2}^2 + u_{3}u_{1}, \label{eq:27SW} \\
SW_{3,4} &= \textstyle \frac{1}{20}u_{3}^5 - u_{3}u_{2}^2 + u_{1}.                           \label{eq:34SW}
\end{align}
The $\sigma$-function expansion has been calculated up to and including the polynomial $C_{38}$ in the $(2,7)$-case and $C_{35}$ in the $(3,4)$-case.  These expansions are available from the authors and should also be available from the journal in the \emph{Supplementary Materials} to the paper.

\section{Bases for the vector spaces of Abelian functions} \label{SEC_Bases}

\subsection{General theory} \label{SEC_Bases_gen}

We classify the Abelian functions according to their pole structure.  We denote by
$\Gamma \big( J, \mathcal{O}(m \Theta^{[k]} ) \big)$ the vector space of Abelian functions defined upon $J$ which have poles of order at most $m$, occurring only on the $k$th strata, $\Theta^{[k]}$.  The case where $k=g-1$ is of interest since all the Abelian functions we deal with have poles occurring here, on the $\theta$-divisor.  A key problem in the theory of Abelian functions is the generation of bases for these vector spaces.

Note that the dimension of the space $\Gamma \big( J, \mathcal{O}(m \Theta^{[g-1]} ) \big)$ is $m^g$ by the Riemann-Roch theorem for Abelian varieties. (See for example \cite{la82} page 99.)  Recall that the $m$-index $\wp$-functions all have poles of order $m$.  We see that the number of $m$-index $\wp$-functions associated with a genus $g$ curve is
\[
( g + m - 1)!/[m!(g - 1)!],
\]
which will not grow as fast as the dimension of the space.  We hence need to identify a wider class of Abelian functions than the $\wp$-functions in order to construct such bases.

Recall that an entire Abelian function must be a constant and that there is no Abelian function with a single pole of order one.  Hence those Abelian functions with poles of order two are the simplest and so are often referred to as \textit{fundamental Abelian functions}.  The basis problem has been solved in general for such functions, through the inclusion of the following extra class of Abelian functions.

\begin{definition} \label{def:Qdef}
Define the operator $\mathcal{D}_i$ as below.  This is now known as \textit{Hirota's bilinear operator}, although it was used much earlier by Baker in \cite{ba07}.
\[
\mathcal{D}_i = \frac{\D}{\D u_i} - \frac{\D}{\D v_i}.
\]
Then an alternative, equivalent definition of the 2-index $\wp$-functions is given by
\begin{equation} \label{eq:2ip_Hirota}
\wp_{ij}(\bm{u}) = - \dfrac{1}{2\sigma(\bm{u})^2} \mathcal{D}_i\mathcal{D}_j \sigma(\bm{u}) \sigma(\bm{v}) \hspace*{0.1in} \Big|_{\bv=\bu}
\qquad i \leq j \in \{1,\dots,g\}.
\end{equation}
We extend this approach to define the \textbf{$\bm{m}$-index $\bm{Q}$-functions}, for $m$ even, by
\begin{equation} \label{eq:Qdef}
Q_{i_1, i_2,\dots,i_m}(\bm{u}) =  \frac{(-1)}{2\sigma(\bm{u})^2} \mathcal{D}_{i_1}\mathcal{D}_{i_2}\dots \mathcal{D}_{i_m} \sigma(\bm{u}) \sigma(\bm{v}) \hspace*{0.08in} \Big|_{\bv=\bu} \quad
\end{equation}
where $i_1 \leq \dots  \leq i_m \in \{1,\dots,g\}$.
\end{definition}

The $m$-index $Q$-functions are Abelian functions with poles of order two when $\sigma(\bu)=0$.  Note that if you were to apply the definition with $m$ odd then the functions would be identically zero.  A 4-index $Q$-function was originally used by Baker with the generic 4-index $Q$-functions introduced when research first started on the trigonal curves.  (In the literature they are just defined as \textit{$Q$-functions}).  The definition for $m$-index $Q$-functions above was developed in \cite{MEe09} as increasing classes are required to deal with cases of higher genus, (see also \cite{MEhgt10}).  In this paper we only need to use 4-index $Q$-functions, which in \cite{eemop07} were shown to satisfy,
\begin{eqnarray} \label{eq:4iQ}
Q_{ijk\ell} = \wp_{ijk\ell} - 2 \wp_{ij}\wp_{k\ell}-2\wp_{ik}\wp_{j\ell} -2\wp_{i\ell}\wp_{jk}.
\end{eqnarray}
Similar expressions have been found for the higher index $Q$-functions, (see \cite{MEe09}).

When considering the vector spaces for Abelian functions with poles of
higher order, a natural place to look for extra functions is in the
derivatives of the functions with lower order poles.  Note that while
the derivatives of $m$-index $\wp$-functions are $(m+1)$-index
$\wp$-functions, the same is not true for the $Q$-functions.  For
brevity we adopt the notation
\[
\partial_{i} Q_{ijkl}(\bu) = \frac{\partial}{\partial u_i} Q_{ijkl}(\bu)
\]
and similarly for other functions.  As discussed in \cite{N11}, the derivatives of existing basis functions will not be sufficient to find successive bases in the case where the theta divisor has singular points.  In \cite{eemop07} the authors introduced the following new class of functions to overcome this problem for the three pole basis in the $(3,4)$-case.

\begin{definition} \label{def_DM}
Consider the matrix,
\[
\big[ \wp_{ij} \big]_{3 \times 3} =
\left[ \begin{array}{ccc}
\wp_{11} & \wp_{12} & \wp_{13} \\
\wp_{21} & \wp_{22} & \wp_{23} \\
\wp_{31} & \wp_{32} & \wp_{33} \\
\end{array} \right]
=
\left[ \begin{array}{ccc}
\wp_{11} & \wp_{12} & \wp_{13} \\
\wp_{12} & \wp_{22} & \wp_{23} \\
\wp_{13} & \wp_{23} & \wp_{33} \\
\end{array} \right].
\]
Then the function $\wp^{[ij]}$ is defined to be the $(i,j)$ minor of
$[ \wp_{ij} ]_{3 \times 3}$.
\end{definition}
\noindent For example,
\[
\wp^{[12]} =
\left| \begin{array}{cc}
\wp_{12} &  \wp_{23} \\
\wp_{13} &  \wp_{33} \\
\end{array} \right| = \wp_{12}\wp_{33} - \wp_{23}\wp_{13}.
\]
So each of these functions will be a sum of two pairs of products of 2-index $\wp$-functions.  Although each product will have poles of order four, we can check by substituting with Definition \ref{def:nip} that these cancel so that the function has poles of order three.  Although this class solved the three pole basis problem in the $(3,4)$-case, it was not sufficient to complete the corresponding basis in the $(2,7)$-case.  We explicitly address this and similar problems for the four pole vector spaces below.  A general method to generate new Abelian functions is currently being developed with some recent new results in \cite{ME11}.

\subsection{The general $(2,7)$-curve}

In this section we treat the most general $(2,7)$-curve, namely the curve defined by equation (\ref{eq:27}).
A well known basis for $\Gamma \big( J, \mathcal{O}(2 \Theta^{[2]} ) \big)$ in the $(2,7)$-case is
\begin{equation} \label{eq:27_2pole}
\C1 \,\op\, \C\wp_{11} \,\op\, \C \wp_{12} \,\op\, \C\wp_{13}
\,\op\, \C\wp_{22} \,\op\, \C\wp_{23} \,\op\, \C\wp_{33} \,\op\, \C \Delta,
\end{equation}
where
\begin{equation} \label{eq:27Delta}
\Delta=\wp_{11}\wp_{33}-\wp_{12}\wp_{23}-\wp_{13}^2+\wp_{13}\wp_{22}.
\end{equation}
The function $\Delta$ has poles of order three in general, (we can this
check using Definition {\ref{def:nip}}).  However, these cancel to order two
in the $(2,7)$-case only, (which we can check using the $\sigma$-expansion).
The $\Delta$-function was introduced by Baker in \cite{ba03} although he did not use it explicitly to form such bases.
More recently the $\Delta$-function has been used in \cite{CA2008} in
a covariant analogue to the theory of hyperelliptic $\wp$-functions.
It is possible to rewrite the theory to match the general approach
suggested above, by replacing $\Delta$ with a $Q$-function of weight
$-12$.  However, the use of $\Delta$ is advantageous since it allows
the theory to be completely realised in terms of 2 and 3-index
$\wp$-functions, with all higher index $\wp$-functions given
recursively in these.

The construction of $\Gamma \big( J, \mathcal{O}(3 \Theta^{[2]} ) \big)$ has recently been studied by Nakayashiki in \cite{N11}.  The author was able to enumerate in detail all the terms with the exception of the final one, which was labeled $F_3$, (see \cite{N11} page 27 onwards).   It was shown that
\[
F_3 = \frac{1 + \text{(lower degree terms)}}{\sigma^3}
\]
and that it satisfies certain differential equations involving power
series of the $u_i$.  In Theorem \ref{thm:27_3pole} below we give a
new explicit form for this term, which we rename as $T$, in terms of
$\wp$-functions.

\begin{theorem} \label{thm:27_3pole}
The basis for $\Gamma \big( J, \mathcal{O}(3 \Theta^{[2]} ) \big)$ is given by
\begin{align} \label{eq:27_3pole}
\begin{array}{ccccccccccccccc}
(\ref{eq:27_2pole})  &\op& \C\wp_{111} &\op& \C\wp_{112} &\op& \C\wp_{113} &\op& \C\wp_{122} &\op& \C\wp_{123}         &\op& \C\wp_{133}  \\
                     &\op& \C\wp_{222} &\op& \C\wp_{223} &\op& \C\wp_{233} &\op& \C\wp_{333} &\op& \C \partial_1\Delta &\op& \C \partial_2\Delta  \\
                     &\op& \C \partial_3\Delta           &\op& \C\wp^{[11]}&\op& \C\wp^{[12]}&\op& \C\wp^{[13]}        &\op& \C\wp^{[23]}
                     &\op& \C\wp^{[33]} &\op& \C T,\\
\end{array}
\end{align}
where
\begin{equation} \label{eq:T}
T = \wp_{222}^2 - 4\wp_{22}^3 - Q_{2222}\wp_{22} \,= \,2\wp_{22}^3 + \wp_{222}^2 - \wp_{22}\wp_{2222}.
\end{equation}
\end{theorem}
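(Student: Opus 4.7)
The plan is to apply Riemann-Roch for Abelian varieties to conclude that $\dim \Gamma(J,\mathcal{O}(3\Theta^{[2]}))=27$, then verify that each displayed function belongs to this space and that the 27 functions are linearly independent over $\C$. A direct count of the list in (\ref{eq:27_3pole}) gives $8+10+3+5+1=27$: the eight elements of (\ref{eq:27_2pole}), the ten three-index $\wp$-functions, the three partials $\partial_i\Delta$, the five minors $\wp^{[ij]}$, and the single new function $T$.

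For membership in $\Gamma(J,\mathcal{O}(3\Theta^{[2]}))$, the three-index $\wp$-functions have poles of order three by Definition \ref{def:nip}; each $\partial_i\Delta$ inherits a pole of order three from the order-two pole of $\Delta$; and the minors $\wp^{[ij]}$ have poles of order three by the cancellation argument recalled immediately after Definition \ref{def_DM}. The nontrivial case is $T$. The equivalence of the two expressions in (\ref{eq:T}) is the specialisation $i=j=k=\ell=2$ of (\ref{eq:4iQ}), which gives $Q_{2222}=\wp_{2222}-6\wp_{22}^2$. In either form each summand of $T$ has a pole of maximal order six along $\Theta^{[2]}$, so I would verify the claimed order-three pole by multiplying through by $\sigma^6$ and substituting the Taylor series of Lemma \ref{lem:sigexp} (already computed through $C_{38}$ in the $(2,7)$ case), then checking that the resulting entire expression vanishes to order three along $\{\sigma=0\}$.

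Linear independence is handled through the Sato-weight grading. Each displayed function is weight-homogeneous, with weights running from $0$ down to $-18$, and any linear relation must hold separately at each weight. Most weight classes contain a single element. Classes containing two elements with different pole orders, such as $\{\wp_{12},\wp^{[11]}\}$ at weight $-8$, $\{\wp_{11},\wp^{[12]}\}$ at weight $-10$, or $\{\Delta,\wp^{[13]}\}$ at weight $-12$, are handled automatically. The remaining multi-element classes consist of pairs of the same pole order, namely $\{\wp_{13},\wp_{22}\}$, $\{\wp_{133},\wp_{223}\}$, $\{\wp_{123},\wp_{222}\}$, $\{\wp_{113},\wp_{122}\}$, $\{\wp_{112},\partial_3\Delta\}$, and $\{\wp_{111},\partial_2\Delta\}$, and are settled by comparing leading Laurent coefficients computed from the $\sigma$-expansion. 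Since $T$ is the unique weight $-18$ element, its independence from the rest reduces to nonvanishing of the lowest-order term of its series.

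The main obstacle is the order-three pole assertion for $T$: the cancellation of three individually order-six summands down to order three is delicate and constitutes the essential content of identifying this new function, pinning down the term left implicit as $F_3$ in Nakayashiki's \cite{N11}. Once a sufficiently long truncation of the $\sigma$-expansion is in hand, however, this reduces to a finite mechanical check made feasible by the algorithms of \cite{MEe09}.
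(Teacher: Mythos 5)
Your overall strategy coincides with the paper's: the dimension $3^3=27$ from Riemann--Roch, membership of each listed function, and linear independence settled with the $\sigma$-expansion (the weight grading you use to cut the problem into small classes is exactly what makes the paper's Maple check feasible, and your enumeration of the multi-element weight classes is complete and correct). Two small remarks on that part: independence of pairs of ``different pole order'' is not automatic unless you also know the higher-order pole does not degenerate (e.g.\ that $\wp^{[11]}$ really has order-three poles here), but this is subsumed by the leading-coefficient comparison you propose anyway; and since the theorem is stated for the general $(2,7)$-curve while the expansions are for the cyclic curve, you should add the paper's one-line specialisation remark that a dependence for general $\mu_j$ would specialise to a dependence for the cyclic $\lambda_j$, so cyclic independence suffices.

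The genuine divergence, and the weak point, is your treatment of the pole order of $T$. You propose to multiply by $\sigma^6$ and verify order-three vanishing along $\{\sigma=0\}$ from the truncated expansion of Lemma \ref{lem:sigexp}. This is not what the paper does, and it is not quite enough: the truncated series only controls finitely many weight levels (equivalently finitely many monomials in the curve parameters), so divisibility by $\sigma^3$ of an entire function cannot be concluded from a finite truncation alone; moreover the expansion you would use is the cyclic one, whereas the theorem concerns the general curve. The paper instead exhibits $T$ as $\mathcal{T}_{222222}$, a member of the general class (\ref{eq:Tgen}), and proves the order-$\leq 3$ property by substituting Definition \ref{def:nip}: writing every $\wp$- and $Q$-function as a differential polynomial in $\sigma$ over a power of $\sigma$, one checks that all terms with denominators $\sigma^4,\sigma^5,\sigma^6$ cancel identically (for instance, in $T=2\wp_{22}^3+\wp_{222}^2-\wp_{22}\wp_{2222}$ the $\sigma_2^6/\sigma^6$ coefficients are $2+4-6=0$, and similarly for the lower orders). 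This is a finite, exact computation valid for any $(n,s)$-curve, so it covers the general $(2,7)$-curve at no extra cost. Replacing your expansion check by this substitution argument closes the gap; the rest of your proposal then matches the paper's proof.
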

\begin{proof}

The dimension of the space is $3^g=3^3=27$ by the Riemann-Roch theorem for Abelian varieties.  All the selected elements belong to the space and we can easily check their linear independence explicitly in Maple using the $\sigma$-expansion.

To actually construct the basis we started by including the 8
functions from basis (\ref{eq:27_2pole}) for the functions with poles
of order at most two.  We then know that the remaining entries must
have poles of order three.  We start by looking for entries from the
set of derivatives of the basis (\ref{eq:27_2pole}).  We test at
decreasing weight levels and look to see whether these functions can
be written as a linear combination upon substitution of the series
expansions.  (A more detailed discussion of the algorithm used for
this is available in \cite{MEe09}.)

Note that while this theorem holds for the general $(2,7)$-curve in equation (\ref{eq:27}), we need only use the series expansions associated to the cyclic $(2,7)$-curve in equation (\ref{eq:c27}).  This is because if an element cannot be expressed using the basis with the restriction on the parameters, then neither will it be expressible with the wider set of parameters.  Further, we only need to use sufficient expansion to give non-zero evaluations of the functions considered in order to check whether they are linearly independent.

After examining all these functions we find that 21 basis elements have been identified.
We identify a further 5 by considering the class given in Definition \ref{def_DM}.  Note that unlike the $(3,4)$-case, here
\[
\wp^{[13]} = \wp_{12}\wp_{23} - \wp_{22}\wp_{13} \qquad \text{and} \qquad \wp^{[22]} = \wp_{11}\wp_{33} - \wp_{13}^2
\]
have the same weight, $(-12)$.  Further, they are linearly dependent and so only one can be included in the basis, (either may be chosen).

To find the final function the following new class of functions is considered.
\begin{align}
&\mathcal{T}_{ijklmn} = \textstyle \wp_{ijk}\wp_{lmn}
- \frac{2}{3}\wp_{ij}\wp_{kl}\wp_{mn} - \frac{2}{3}\wp_{ij}\wp_{km}\wp_{ln}
- \frac{2}{3}\wp_{ij}\wp_{kn}\wp_{lm} - \frac{2}{3}\wp_{ik}\wp_{jl}\wp_{mn}   \label{eq:Tgen} \\
&\quad \textstyle - \frac{2}{3}\wp_{ik}\wp_{jm}\wp_{ln}
- \frac{2}{3}\wp_{ik}\wp_{jn}\wp_{lm}
- \frac{2}{3}\wp_{il}\wp_{jk}\wp_{mn} + \frac{1}{3}\wp_{il}\wp_{jm}\wp_{kn}
+ \frac{1}{3}\wp_{il}\wp_{jn}\wp_{km}   \nonumber \\
&\quad \textstyle  - \frac{2}{3}\wp_{im}\wp_{jk}\wp_{ln}
+ \frac{1}{3}\wp_{im}\wp_{jl}\wp_{kn}
+ \frac{1}{3}\wp_{im}\wp_{jn}\wp_{kl}
- \frac{2}{3}\wp_{in}\wp_{jk}\wp_{lm} + \frac{1}{3}\wp_{in}\wp_{jl}\wp_{km}
+ \frac{1}{3}\wp_{in}\wp_{jm}\wp_{kl} \nonumber \\
&\quad \textstyle - \frac{2}{3}Q_{ijkl}\wp_{mn} - \frac{2}{3}Q_{ijkm}\wp_{ln}
- \frac{2}{3}Q_{ijkn}\wp_{lm} + \frac{1}{3}Q_{ijlm}\wp_{kn}
+ \frac{1}{3}Q_{ijln}\wp_{km}
+ \frac{1}{3}Q_{ijmn}\wp_{kl}   \nonumber \\
&\quad \textstyle + \frac{1}{3}Q_{iklm}\wp_{jn}
+ \frac{1}{3}Q_{ikln}\wp_{jm}
+ \frac{1}{3}Q_{ikmn}\wp_{jl}
- \frac{2}{3}Q_{ilmn}\wp_{jk}
+ \frac{1}{3}Q_{jklm}\wp_{in} + \frac{1}{3}Q_{jkln}\wp_{im} \nonumber \\
&\quad \textstyle + \frac{1}{3}Q_{jkmn}\wp_{il} - \frac{2}{3}Q_{jlmn}\wp_{ik}
- \frac{2}{3}Q_{klmn}\wp_{ij}.  \nonumber
\end{align}
Substituting with Definition \ref{def:nip} shows these functions to
have poles of order at most three.  We stress that this is the case
for the $\mathcal{T}$-functions associated to {\em any} curve, and not
just the genus three case we consider here. The functions were derived
through an attempt to match in general, the poles of a quadratic term
in the 3-index $\wp$-functions with a polynomial in 2-index
$\wp$-functions.  Although this is not possible, we can match by
including Q-functions as well.  A general polynomial of this type was
constructed and the coefficients then determined as above to ensure
the poles of order greater than three vanish.  A method to generate
new basis functions is currently being developed with some recent results published in \cite{ME11}.

The new class of functions is examined at decreasing weight levels.  We  find that all can expressed as a linear combination of existing basis entries except for those at weight $-18$.  We can choose any $\mathcal{T}$-function at weight $-18$ to complete the basis and use the simplest of these,
$\mathcal{T}_{222222} = T$ as given in equation (\ref{eq:T}). \\
\end{proof}

\begin{theorem} \label{thm:27_4pole}
The basis for $\Gamma \big( J, \mathcal{O}(4 \Theta^{[2]} ) \big)$ is given by
\begin{align*} 
\begin{array}{ccccccccccccc}
(\ref{eq:27_3pole})
&\op& \C\wp_{1111} &\op& \C\wp_{1112} &\op& \C\wp_{1113} &\op& \C\wp_{1122} &\op& \C\wp_{1123} \\
&\op& \C\wp_{1133} &\op& \C\wp_{1222} &\op& \C\wp_{1223} &\op& \C\wp_{1233} &\op& \C\wp_{1333} \\
&\op& \C\wp_{2222} &\op& \C\wp_{2223} &\op& \C\wp_{2233} &\op& \C\wp_{2333} &\op& \C\wp_{3333} \\
&\op& \C \partial_{13}\Delta          &\op& \C \partial_{23}\Delta          &\op& \C \partial_{33}\Delta
&\op& \C \partial_{12}\Delta          &\op& \C \partial_{22}\Delta          \\
&\op& \C \partial_{11}\Delta          &\op& \C \partial_{1}\wp^{[11]}       &\op& \C \partial_{2}\wp^{[11]}
&\op& \C \partial_{3}\wp^{[11]}       &\op& \C \partial_{1}\wp^{[12]}       \\
&\op& \C \partial_{3}\wp^{[12]}       &\op& \C \partial_{1}\wp^{[13]}       &\op& \C \partial_{2}\wp^{[13]}
&\op& \C \partial_{3}\wp^{[13]}       &\op& \C \partial_{1}\wp^{[23]}       \\
&\op& \C \partial_{2}\wp^{[23]}       &\op& \C \partial_{1}\wp^{[33]}       &\op& \C \partial_{2}\wp^{[33]}
&\op& \C \partial_{1}T                &\op& \C \partial_{2}T                \\
&\op& \C \partial_{3}T                &\op& \C G,
\end{array}
\end{align*}
where $G = \wp_{22222222} - 140\wp_{2222}^2$.
\end{theorem}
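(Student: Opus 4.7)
The plan is to mirror the strategy of Theorem \ref{thm:27_3pole}. By the Riemann--Roch theorem for Abelian varieties, $\dim \Gamma\bigl(J, \mathcal{O}(4\Theta^{[2]})\bigr) = 4^3 = 64$, while a direct count of the listed generators gives exactly $27 + 15 + 6 + 12 + 3 + 1 = 64$. It therefore suffices to verify that each listed function belongs to the space and that the whole collection is linearly independent. Linear independence can be established, as before, by substituting sufficiently many terms of the Taylor expansion from Lemma \ref{lem:sigexp} and confirming in Maple that the resulting matrix of coefficients has rank $64$. As in Theorem \ref{thm:27_3pole}, it suffices to work with the cyclic $(2,7)$ expansion throughout, since any linear dependence over $\mathbb{Q}(\mu_j)$ would specialise to one over $\mathbb{Q}(\lambda_j)$.

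The first $63$ basis elements I would construct by a systematic downward sweep in Sato weight through derivatives of the basis (\ref{eq:27_3pole}) for $\Gamma\bigl(J, \mathcal{O}(3\Theta^{[2]})\bigr)$. By Definition \ref{def:nip} each four-index $\wp$-function has pole order exactly four, contributing $15$ new elements. Since $\Delta$ has poles of order two, its six second derivatives $\partial_{ij}\Delta$ supply another six. The first derivatives of $T$ and of the five independent $\wp^{[ij]}$ all have pole order at most four, but the $\sigma$-expansion test of \cite{MEe09} at each weight level reveals that exactly three of the fifteen candidates $\partial_k \wp^{[ij]}$ — namely $\partial_2 \wp^{[12]}$, $\partial_3 \wp^{[23]}$ and $\partial_3 \wp^{[33]}$ — are already expressible as linear combinations of elements selected at their weights and so must be omitted. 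This leaves $12$ independent derivatives of the $\wp^{[ij]}$ and $3$ derivatives of $T$, exhausting the pool of derivatives of (\ref{eq:27_3pole}) and yielding a running total of $63$ linearly independent basis vectors.

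The main obstacle, exactly as in the three-pole theorem where the non-derivative function $T$ had to be introduced, is producing the remaining $64$-th element $G$. Following the pattern of the $\mathcal{T}$-class construction in (\ref{eq:Tgen}), I would search for a linear combination of two weight-matched functions whose individual pole orders exceed four but whose leading singular parts cancel. At weight $-24$ the natural pair is $\wp_{22222222}$ and $\wp_{2222}^2$, both of which have poles of order eight (by Definition \ref{def:nip} and by direct multiplication, respectively). The coefficient $c=-140$ is then the unique scalar, up to an overall rescaling, for which the $\sigma$-expansion of $\wp_{22222222} + c \, \wp_{2222}^2$ has no contributions of orders five through eight. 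It is recognisable from the classical elliptic identity $\wp^{(6)} = 140 (\wp'')^2 + \text{(lower pole order)}$ obtained by iteratively differentiating $(\wp')^2 = 4\wp^3 - g_2 \wp - g_3$; indeed the leading singular coefficients match as $7! = 140 \cdot (3!)^2$, since along a generic line transverse to $\Theta^{[2]}$ the function $\wp_{22}$ locally plays the role of a Weierstrass $\wp$ in the variable $u_2$. The expected delicate point is the automatic vanishing of the orders $5$, $6$, $7$ contributions in the higher-genus $\sigma$-expansion (one scalar is being asked to kill four leading terms), which must be checked directly against the expansion up to $C_{38}$. Once $G$ is verified to lie in $\Gamma\bigl(J, \mathcal{O}(4\Theta^{[2]})\bigr)$, a final Maple rank check confirms its linear independence from the $63$ previously identified elements, completing the basis.
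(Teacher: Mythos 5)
Your proposal is correct and follows the paper's strategy almost step for step: dimension $4^3=64$ by Riemann--Roch, $63$ elements obtained from the basis (\ref{eq:27_3pole}) and its derivatives (your bookkeeping, including the omission of exactly $\partial_2\wp^{[12]}$, $\partial_3\wp^{[23]}$, $\partial_3\wp^{[33]}$, matches the stated basis), with membership and independence checked against the cyclic $\sigma$-expansion in Maple. The one place you genuinely diverge is the production of $G$. The paper first constructs the general class $\mathcal{G}_{ijklmnop}=\wp_{ijklmnop}-4\sum(\text{the }35\text{ splittings of the eight indices into two }4\text{-index }\wp\text{-products})$, shown by substituting Definition \ref{def:nip} to have poles of order at most four for \emph{any} curve and any indices; $G$ is then simply the diagonal specialization $\mathcal{G}_{22222222}$, where all $35$ products coincide and the coefficient $4\cdot 35=140$ falls out automatically. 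You instead fix the single coefficient $-140$ by matching the leading $\sigma^{-8}$ singularity (equivalently the elliptic identity $7!=140\,(3!)^2$) and leave the simultaneous vanishing of the order $5$, $6$, $7$ contributions — the ``delicate point'' you flag — to a direct check against the expansion. That check does succeed, so your argument closes; but the paper's general-class construction buys this cancellation structurally (the universal coefficients were determined precisely so that every pole above order four cancels identically in the $\log\sigma$ calculus), explains where $140$ comes from, and yields a reusable tool (it is the same device used again for the $(2,9)$-curve bases in the appendix), whereas your version is leaner but confined to the diagonal case and rests on the computational verification.
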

\begin{proof}
We follow the proof of Theorem \ref{thm:27_3pole}.  This time the dimension is $4^g=4^3=64$ and we find that we can identify 63 functions using the basis (\ref{eq:27_3pole}) and its derivatives.

To find the final basis function the following new class of Abelian functions was examined.
\begin{align*}
&\mathcal{G}_{ijklmnop} = \wp_{ijklmnop} - 4\big(
\wp_{ijno}\wp_{klmp} + \wp_{ijnp}\wp_{klmo} + \wp_{ijlp}\wp_{kmno}
+ \wp_{ijmn}\wp_{klop} + \wp_{ijmo}\wp_{klnp} \\
&\quad
+ \wp_{ijmp}\wp_{klno} + \wp_{ijkm}\wp_{lnop} + \wp_{ijkn}\wp_{lmop}
+ \wp_{ijko}\wp_{lmnp} + \wp_{ijkp}\wp_{lmno} + \wp_{ijlm}\wp_{knop} \\
&\quad + \wp_{ijln}\wp_{kmop} + \wp_{ijlo}\wp_{kmnp} + \wp_{ijkl}\wp_{mnop}
+ \wp_{ijop}\wp_{klmn} + \wp_{iklm}\wp_{jnop} + \wp_{ikln}\wp_{jmop} \\
&\quad + \wp_{iklo}\wp_{jmnp} + \wp_{iklp}\wp_{jmno} + \wp_{ikmn}\wp_{jlop}
+ \wp_{ikmo}\wp_{jlnp} + \wp_{ikmp}\wp_{jlno} + \wp_{ikno}\wp_{jlmp} \\
&\quad + \wp_{iknp}\wp_{jlmo} + \wp_{ikop}\wp_{jlmn} + \wp_{ilmn}\wp_{jkop}
+ \wp_{ilmo}\wp_{jknp} + \wp_{ilmp}\wp_{jkno} + \wp_{ilno}\wp_{jkmp} \\
&\quad + \wp_{ilnp}\wp_{jkmo}  + \wp_{ilop}\wp_{jkmn} + \wp_{imno}\wp_{jklp}
+ \wp_{imnp}\wp_{jklo} + \wp_{imop}\wp_{jkln} + \wp_{inop}\wp_{jklm} \big).
\end{align*}
The $\mathcal{G}$-functions associated with any curve have poles of order at most four (using Definition \ref{def:nip}) and were derived by matching the higher order poles in an 8-index $\wp$-function using arbitrary sum of quadratic terms in the 4-index $\wp$-functions.  Examining at decreasing weight levels we see that we need to include a function at weight $-24$ and choose $\mathcal{G}_{22222222} = G$ as given in the theorem to complete the basis.  \\
\end{proof}

We note that these bases are not unique.  As discussed at the start of the section, we could replace the $\Delta$-function with a Q-function if desired.  Similarly, in the theorems above we could have used any $\mathcal{T}$-function at weight $-18$ and any $\mathcal{G}$-function at weight $-24$ to complete the 3 and 4-pole bases respectively.  However, we can conclude that the weight structure of each basis is unique.
\begin{lemma}
Consider a basis for $\Gamma \big( J, \mathcal{O}(k \Theta^{[g-1]} ) \big)$, associated to any $(n,s)$-curve.  Scale the functions so that they do not depend on any curve constants.  Then the set of weights of the functions is unique.
\end{lemma}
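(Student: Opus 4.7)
The plan is to show that the Sato weight endows $\Gamma \big( J, \mathcal{O}(k \Theta^{[g-1]} ) \big)$ with a natural grading, and then deduce uniqueness of the multiset of weights from the dimensions of the graded pieces. The output will use the same $\mathbb{C}^\ast$-scaling that underlies the weight conventions already established in equation (\ref{eq:Sato_weight}) and Lemma \ref{lem:sigexp}.

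First I would invoke the $\mathbb{C}^\ast$-symmetry implicit in the Sato weight: scaling $u_i \mapsto t^{a_i} u_i$ with the weights $a_i$ prescribed by the Weierstrass gap sequence, and simultaneously $\lambda_j \mapsto t^{-n(s-j)} \lambda_j$ (and analogously for the $\mu_j$), takes the family of $(n,s)$-curves into itself and maps $\Gamma$ for one member of the family isomorphically onto $\Gamma$ for the rescaled member. Because $\sigma(\bu)$ is built from weight-homogeneous ingredients, and because the periodicity and pole-order conditions defining $\Gamma$ are preserved by this action, every $f \in \Gamma$ decomposes as a finite sum $f = \sum_w f_w$ of Sato-weight-homogeneous components $f_w$, each of which again lies in $\Gamma$. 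Concretely, one reads the $f_w$ off the $\sigma$-expansion of Lemma \ref{lem:sigexp} by selecting those Taylor coefficients in $\bu$ that pair with polynomials in $\bm\lambda$ of the complementary weight.

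Next I would argue that the normalisation "scaled so as not to depend on any curve constants" forces each basis element $f_i$ to be weight-homogeneous. If some $f_i$ had two nonzero homogeneous pieces $f_{i,w}$ and $f_{i,w'}$ with $w\neq w'$, then applying the $\mathbb{C}^\ast$-action and re-expressing the result in the chosen basis would require the transition coefficients to carry nontrivial powers of $t$; under the simultaneous scaling of the $\lambda_j$, these powers of $t$ reincarnate as monomials in the curve constants, contradicting the hypothesis. Thus every $f_i$ has a definite Sato weight $w_i$. The multiset $\{w_i\}$ then equals the multiset $\bigsqcup_w \{\,w, w, \ldots, w\,\}$ with each weight $w$ appearing $\dim V_w$ times, where $V_w \subset \Gamma$ is the weight-$w$ eigenspace of the $\mathbb{C}^\ast$-action; since $\dim V_w$ is intrinsic to the space, the multiset is basis-independent.

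The main obstacle is the rigorous passage in the middle step from "independent of curve constants" to "weight-homogeneous". The cleanest formalisation is to view the $\sigma$-expansion as living in a bigraded ring — polynomial in the $\lambda_j$ tensored with power series in $\bu$ — in which the Sato weight is a single linear functional and homogeneity literally means concentration in one graded piece; the curve-constant-free normalisation then prohibits any nontrivial monomial in $\bm\lambda$ from appearing as a relative prefactor between two differently-weighted components of a single basis element, and uniqueness follows.
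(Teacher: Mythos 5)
Your overall strategy (grade the space by Sato weight, then read off the weight multiset from the dimensions of the graded pieces) is different from the paper's, but as written it has two genuine gaps. First, the middle step --- that the normalisation ``does not depend on any curve constants'' forces each basis entry to be weight-homogeneous --- is false. For example, in the $(2,7)$ two-pole space (\ref{eq:27_2pole}) the function $\wp_{33}+\wp_{23}$ contains no curve constants and may perfectly well replace $\wp_{33}$ as a basis entry, yet it has no definite Sato weight; your $\mathbb{C}^\ast$-argument produces only powers of $t$ as transition coefficients between the rescaled and original bases, which contradicts nothing, since the hypothesis constrains the basis functions themselves and not these coefficients. The lemma implicitly presupposes that the entries have definite weights (otherwise ``the set of weights'' is not defined), so this step is both unprovable and unnecessary.

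Second, the counting step identifying the multiplicity of a weight $w$ with $\dim V_w$ does not work in the only setting where the grading exists, namely over the family with the $\lambda_j$ carrying their own weights. In that setting the weight-$w$ piece also contains $\lambda$-monomials times functions of higher weight: in the $(2,7)$ two-pole space the weight $-4$ piece contains not only $\wp_{23}$ but also $\lambda_6\wp_{33}$ and $\lambda_5\cdot 1$, so $\dim_{\C}V_w$ overcounts the multiplicity; if instead you restrict $V_w$ to its curve-constant-free part, the basis-independence of that subspace is essentially the assertion being proved, and the argument becomes circular unless you carry out the graded-module (Nakayama-type, $\bm{\lambda}\to 0$) formalisation you only gesture at in the final paragraph. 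The paper's own proof is much more elementary and avoids the grading altogether: if a basis entry could be exchanged for a constant-free entry of different weight, the replacement, expanded in the original basis, must involve the discarded entry nontrivially, i.e.\ be (modulo the remaining entries) a constant multiple of it, and equating functions of distinct Sato weights yields an identity inhomogeneous in the weights, a contradiction. Your route could be repaired, but it needs the module-theoretic argument made precise, whereas the replacement argument needs only the homogeneity of all identities in the theory.
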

For example, the basis (\ref{eq:27_2pole}) has weights $[-12, -10, -8, -6, -6, -4, -2, 0]$.  While the functions for this basis may be changed, this set of weights can not.
\begin{proof}
Suppose that we could replace a basis entry with another of different weight.  The replacement function would have to be linearly independent of the other basis entries, 
and hence a constant multiple of the original function.
This would lead to equations that are inhomogeneous in the Sato weights. \\ 
\end{proof}

We can also determine a minimal bound of these weights, as detailed in the following lemma.
\begin{lemma} \label{lem:minwt}
The basis for the  space $\Gamma \big( J, \mathcal{O}(k \Theta^{[g-1]} ) \big)$ associated to
an $(n,s)$-curve will contain functions with weights no lower than $-k\mathrm{wt}(\sigma)$.
\end{lemma}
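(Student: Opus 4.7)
The plan is to exploit the factorisation of any Abelian function through the $\sigma$-function together with the fact that the variables $u_i$ have positive Sato weight. Let $f$ be any basis element; by the preceding lemma we may assume $f$ is homogeneous of Sato weight $w$. Since $\sigma$ has simple zeros exactly on $\kappa^{-1}(\Theta^{[g-1]})$ and is non-vanishing elsewhere, the product $N(\bu) := f(\bu)\sigma(\bu)^k$ is holomorphic on all of $\C^g$ and hence entire. Moreover, because both $f$ and $\sigma^k$ are homogeneous of Sato weight, so is $N$, with
\[
\mathrm{wt}(N) \;=\; w + k\cdot\mathrm{wt}(\sigma).
\]

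The key observation is that each $u_i$ carries strictly positive Sato weight, as these weights are prescribed by the Weierstrass gap sequence for the $(n,s)$-curve (so $(5,3,1)$ and $(5,2,1)$ in the two cases of interest, and positive integers in general). Consequently, every monomial $\bu^{\alpha}$ in the Taylor expansion of $N$ about the origin has weight $\sum_i \alpha_i\,\mathrm{wt}(u_i) \geq 0$, with equality only for the constant monomial. Since $N$ is entire and homogeneous, its Taylor series must consist entirely of monomials of weight $\mathrm{wt}(N)$; for $N\not\equiv 0$ this forces $\mathrm{wt}(N) \geq 0$.

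Combining these two facts gives $w + k\cdot\mathrm{wt}(\sigma) \geq 0$, that is, $w \geq -k\cdot\mathrm{wt}(\sigma)$, which is the claimed bound. The argument is essentially a one-line weight accounting once the factorisation $f = N/\sigma^k$ has been set up, so there is no substantive obstacle; the only point requiring care is to note that the homogeneity assumption on $f$ is precisely the scaling adopted in the previous lemma and that $N\not\equiv 0$ since $f\not\equiv 0$.
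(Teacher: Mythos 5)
Your argument is correct and is essentially the paper's own proof: both write a basis element as an entire numerator over $\sigma(\bu)^k$ and observe that, since the $u_i$ carry positive Sato weight, the numerator has weight at least zero (a constant being the extreme case), giving the bound $-k\,\mathrm{wt}(\sigma)$. Your version merely makes explicit the intermediate steps (entirety of $f\sigma^k$ from the simple zeros of $\sigma$ along $\kappa^{-1}(\Theta^{[g-1]})$, and the homogeneity normalisation from the preceding lemma) that the paper leaves implicit.
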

\begin{proof}
Such basis functions may be written as a rational function in $\sigma(\bu)$ with overall denominator $\sigma(\bu)^k$.  The simplest possible numerator is a constant, which gives a function with weight $-k\text{wt}(\sigma)$.  Any other numerator would depend on $\bu$ and so have a higher weight.    \\
\end{proof}
Lemma \ref{lem:minwt} stops us from testing those functions at a lower weight thus drastically reducing the amount of computation required to find such bases.  These lower weight functions must be expressible as a linear combination of the basis functions in which every term depends on the curve parameters.

We conjecture that every such basis may be evaluated using polynomials of $\wp$-functions.  This is known to be true for the curves considered here (see \cite{bel97} and \cite{bel00}) and should follow similarly for all $(n,s)$-curves.  All such polynomials have negative weight except the constant function which will take the maximal weight of zero.  Together with Lemma \ref{lem:minwt} this would restrict basis entries to the weight range $-k\text{wt}(\sigma), \dots, 0.$

We note that in the $(2,7)$-case the weight of $\sigma(\bu)$ is $+6$ and the functions $\Delta, T$ and $G$ have weight $12, 18$ and 24 respectively.  They are hence the minimal weight entries in their respective bases.
By considering the rational limit of these functions, where $\sigma(\bu)=SW_{2,7}(\bu)$, we can check that these functions each have leading term of a constant over $\sigma(\bu)^k$ with $k=2,3,4$ respectively.

It is clear that functions of the form $(\text{const}+\mathcal{O}(u_i))/\sigma^k$ play a special role in completing the construction of
$\Gamma \big( J, \mathcal{O}(k \Theta^{[g-1]} ) \big)$.  Such functions are straightforward to construct using the rational limit and the method of undetermined coefficients.  For example, the following function evaluates to a constant over $\sigma^5$ and we postulate that such a function will be essential for the 5-pole basis.
\[
12 \wp_{22}^5 - 8\wp_{22}^3\wp_{2222} + 6\wp\wp_{22}^2\wp_{222}^2 + \wp_{22}\wp_{2222}^2 - \wp_{2222}\wp_{222}^2.
\]

\subsection{The general $(3,4)$-curve}

Here we treat the most general \((3,4)\)-curve, namely the curve defined by equation (\ref{eq:34}).
Lemma 8.1 in \cite{eemop07} identified the basis for $\Gamma \big( J, \mathcal{O}(2 \Theta^{[2]} ) \big)$ in the $(3,4)$-case as
\begin{equation} \label{eq:34_2pole}
\C1 \,\op\, \C\wp_{11} \,\op\, \C\wp_{12} \,\op\, \C\wp_{13}
    \,\op\, \C\wp_{22} \,\op\, \C\wp_{23} \,\op\, \C\wp_{33} \,\op\, \C Q_{1333},
\end{equation}
and the basis for $\Gamma \big( J, \mathcal{O}(3 \Theta^{[2]} ) \big)$ as
\begin{align} \label{eq:34_3pole}
\begin{array}{ccccccccccccccccccc}
(\ref{eq:34_2pole})  &\op& \C\wp_{111} &\op& \C\wp_{112} &\op& \C\wp_{113} &\op& \C\wp_{122} &\op& \C\wp_{123} \\
                  &\op& \C\wp_{133} &\op& \C\wp_{222} &\op& \C\wp_{223} &\op& \C\wp_{233} &\op& \C\wp_{333} \\
                  &\op& \C \partial_1 Q_{1333}    &\op& \C \partial_2 Q_{1333}    &\op& \C \partial_3 Q_{1333}
                  &\op& \C\wp^{[11]}&\op& \C\wp^{[12]}\\
                  &\op& \C\wp^{[13]}&\op& \C\wp^{[22]}&\op& \C\wp^{[23]}  &\op& \C\wp^{[33]}.
\end{array}
\end{align}
Note that here the functions $\wp^{[13]}$ and $\wp^{[22]}$ have weight $-10$ and $-12$ respectively.  They were hence independent meaning this class was sufficient to complete the basis and no $\mathcal{T}$-functions were required.  However, we find that upon proceeding to the four pole basis we will have to define an extra class.
\begin{theorem} \label{thm:34_4pole}
The basis for $\Gamma \big( J, \mathcal{O}(4 \Theta^{[2]} ) \big)$ is given by
\begin{align} \label{eq:34_4pole}
\begin{array}{ccccccccccc}
(\ref{eq:34_3pole}) &\op& \C\wp_{1111} &\op& \C\wp_{1112} &\op& \C\wp_{1113} &\op& \C\wp_{1122} \\
                 &\op& \C\wp_{1123}    &\op& \C\wp_{1133} &\op& \C\wp_{1222} &\op& \C\wp_{1223} \\
                 &\op& \C\wp_{1233}    &\op& \C\wp_{1333} &\op& \C\wp_{2222} &\op& \C\wp_{2223} \\
                 &\op& \C\wp_{2233}    &\op& \C\wp_{2333} &\op& \C\wp_{3333} &\op& \C\partial_1\partial_1 Q_{1333} \\
                 &\op& \C\partial_1 \partial_2 Q_{1333}   &\op& \C\partial_1 \partial_3 Q_{1333}
                 &\op& \C\partial_2 \partial_2 Q_{1333}   &\op& \C\partial_2 \partial_3 Q_{1333} \\
                 &\op& \C\partial_3 \partial_3 Q_{1333}   &\op& \C\partial_1 \wp^{[11]}
                 &\op& \C\partial_1 \wp^{[12]}            &\op& \C\partial_1 \wp^{[13]} \\
                 &\op& \C\partial_1 \wp^{[22]}            &\op& \C\partial_1 \wp^{[23]}
                 &\op& \C\partial_1 \wp^{[33]}            &\op& \C\partial_2 \wp^{[11]} \\
                 &\op& \C\partial_2 \wp^{[12]}            &\op& \C\partial_2 \wp^{[13]}
                 &\op& \C\partial_2 \wp^{[22]}            &\op& \C\partial_2 \wp^{[23]} \\
                 &\op& \C\partial_2 \wp^{[33]}            &\op& \C\partial_3 \wp^{[11]}
                 &\op& \C\partial_3 \wp^{[12]}            &\op& \C\partial_3 \wp^{[22]} \\ &\op& \C F,
\end{array}
\end{align}
where
\begin{equation} \label{eq:F}
F = \wp_{11}\wp_{22}\wp_{33}-\wp_{11}\wp_{23}^2-\wp_{12}^2\wp_{33}+2\wp_{12}\wp_{13}\wp_{23}-\wp_{13}^2\wp_{22}.
\end{equation}
\end{theorem}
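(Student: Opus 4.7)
The plan is to mirror the strategy of Theorem \ref{thm:27_4pole}. By the Riemann--Roch theorem, $\dim \Gamma(J, \mathcal{O}(4\Theta^{[2]})) = 4^3 = 64$. Counting the listed entries: the $27$ functions of basis (\ref{eq:34_3pole}), the $15 = \binom{6}{4}$ four-index $\wp$-functions, the $6$ second derivatives $\partial_i \partial_j Q_{1333}$, the $15$ first derivatives of the $\wp^{[jk]}$ functions, and $F$ sum to exactly $64$. Note that only $15$ of the $18$ possible $\partial_i \wp^{[jk]}$ appear: the derivatives $\partial_3 \wp^{[13]}, \partial_3 \wp^{[23]}, \partial_3 \wp^{[33]}$ are omitted because they turn out to be linearly dependent on earlier basis entries.

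First I would verify that each listed function lies in $\Gamma(J, \mathcal{O}(4\Theta^{[2]}))$. For the $4$-index $\wp$-functions this is immediate from Definition \ref{def:nip}; for the various derivatives of $3$-pole basis elements it follows because differentiating a function with poles of order at most $3$ yields a function with poles of order at most $4$ along the same divisor. The delicate case is $F = \det[\wp_{ij}]_{3 \times 3}$: each of its five terms is a product of three $2$-index $\wp$-functions and generically has poles of order $6$, but the determinantal combination with the coefficients of (\ref{eq:F}) is claimed to force the poles of orders $6$ and $5$ to cancel. This cancellation would be checked by substituting the $\sigma$-expansion of Lemma \ref{lem:sigexp} and inspecting the leading Laurent coefficients, exactly as in the verification that $\Delta$ has poles of order $2$ in the $(2,7)$-case. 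To produce the rest of the list I would start from the $27$ elements of (\ref{eq:34_3pole}) together with the $15$ four-index $\wp$-functions, and working from the top weight downwards, test derivatives of the $3$-pole basis for linear independence against what has already been included, again using the $\sigma$-expansion. This identifies the $6$ second derivatives of $Q_{1333}$ and the $15$ first derivatives of the $\wp^{[jk]}$'s as new, and exposes the three dependencies among the $\partial_3 \wp^{[jk]}$'s. The final linear independence of all $64$ functions is then confirmed by checking that the matrix of leading $\sigma$-coefficients has rank $64$, following \cite{MEe09}.

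The main obstacle is the construction of $F$. Unlike the $(2,7)$-case, where the missing basis function sat at the minimum weight $-4\,\mathrm{wt}(\sigma) = -24$ and could be produced by a constant-over-$\sigma^4$ ansatz, the shortfall here occurs at the intermediate weight $-16$, so that template does not apply. One is led instead to search among weight-$(-16)$ polynomials in $2$-index $\wp$-functions (possibly augmented by $Q$-functions), imposing by undetermined coefficients that the order-$6$ and order-$5$ pole terms in the $\sigma$-expansion vanish. The determinant of $[\wp_{ij}]_{3\times 3}$ is the natural ansatz in view of Definition \ref{def_DM}, and solving the coefficient conditions produces the coefficients $(1, -1, -1, 2, -1)$ of (\ref{eq:F}). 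Verifying that this combination indeed yields a function with poles of order at most $4$ amounts to a special identity for the $(3,4)$-curve that must be established explicitly via the $\sigma$-expansion; this is the computational heart of the proof.
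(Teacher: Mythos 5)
Your overall route is the paper's: Riemann--Roch gives dimension $4^3=64$, the first $63$ entries come from the basis (\ref{eq:34_3pole}) and its derivatives (with linear independence checked against the $\sigma$-expansion as in \cite{MEe09}), and the last entry is found at weight $-16$ by an undetermined-coefficients search among cubics in the $2$-index $\wp$-functions whose order-$5$ and order-$6$ poles cancel; your identification of $F$ as $\det[\wp_{ij}]_{3\times 3}$ and your count $27+15+6+15+1=64$ match the theorem exactly.

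The one step you mischaracterise is the nature of the pole cancellation in $F$. You describe it as ``a special identity for the $(3,4)$-curve'' to be established via the $\sigma$-expansion, in analogy with $\Delta$ in the $(2,7)$-case. That analogy is the wrong one: $\Delta$ genuinely has order-$3$ poles in general and reduces only for the $(2,7)$-curve, whereas $F$ is the member $\mathcal{F}_{112233}$ of a general class $\mathcal{F}_{ijklmn}$ which the paper shows has poles of order at most four for \emph{any} curve, simply by substituting Definition \ref{def:nip}. (Concretely, writing $\wp_{ij}=(\sigma_i\sigma_j-\sigma\sigma_{ij})/\sigma^2$, the would-be order-$6$ and order-$5$ terms in $\det[\wp_{ij}]$ involve the determinant and the cofactors of the rank-one matrix $[\sigma_i\sigma_j]$, all of which vanish identically.) This distinction is not cosmetic: the theorem is stated for the general $(3,4)$-curve (\ref{eq:34}), and the cyclic $\sigma$-expansion shortcut is justified in the paper only for the linear-independence step, not for membership in $\Gamma\big(J,\mathcal{O}(4\Theta^{[2]})\big)$. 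Verifying the cancellation only through the cyclic expansion, as you propose, would strictly establish that $F$ lies in the space for the cyclic curve alone; the algebraic identity via Definition \ref{def:nip} is what covers the general case (and it is the same mechanism, like $\mathcal{T}$ and $\mathcal{G}$, that the paper advertises as progress on the basis problem). With that repair your argument coincides with the paper's proof.
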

\begin{proof}
We follow the proof of Theorem \ref{thm:27_3pole}.  This time the dimension is $4^g=4^3=64$ and we find that we can identify 63 functions using the basis (\ref{eq:34_3pole}) and its derivatives.

To find the final basis function the following new class of Abelian functions was examined.
\begin{align*}
\mathcal{F}_{ijklmn} &=  \wp_{ij}\wp_{kl}\wp_{mn} - \wp_{ij}\wp_{kn}\wp_{lm} - \wp_{il}\wp_{jk}\wp_{mn} + \wp_{il}\wp_{jn}\wp_{km} \\
& \qquad + \wp_{im}\wp_{jl}\wp_{kn}
- \wp_{im}\wp_{jn}\wp_{kl} + \wp_{in}\wp_{jk}\wp_{lm} - \wp_{in}\wp_{jl}\wp_{km}.
\end{align*}
The $\mathcal{F}$-functions associated with any curve can be seen to have poles of order at most four by substituting with Definition \ref{def:nip} and were derived by matching the higher order poles in an arbitrary sum of cubic terms in the 2-index $\wp$-functions.  Examining at decreasing weight levels we see that we need to include a function at weight $-16$ and choose
$\mathcal{F}_{112233} = F$ as given in equation (\ref{eq:F}) to complete the basis. \\
\end{proof}
\noindent This new basis allows us to derive the new addition formula in Theorem \ref{thm:34_3t3v}.

The problem of determining such bases is of importance and is a
barrier to further development of the theory.  The construction of the
$\mathcal{T}, \mathcal{F}$ and $\mathcal{G}$-functions in this section
represent great progress towards a general solution of the problem.
We have used these functions, along with another new class to solve
the 2 and 3-pole basis problems for the (2,9)-curve (genus four) and
present these in Appendix \ref{APP_29}.

\section{Differential Equations} \label{SEC_DE}

The $\wp$-functions associated to $(n,s)$-curves satisfy a variety of
differential equations which we review in this section.  Some of
these differential equations can be found occurring naturally in areas
of mathematical physics, so the $\wp$-functions can be used to
give solutions to a variety of important problems.  In this section we
consider the three main classes of differential equations and compare
the explicit equations for the two genus three cases.  Note that
although the functions associated to the $(2,7)$ and $(3,4)$-curves are
notationally the same, they behave differently and satisfy different
equations.  This is most apparent from the different weights assigned
to the functions, summarised for the fundamental functions in the table below.

\begin{center}
\begin{tabular}{|c|c|c|c|c|c|c|c|c|c|c|}\hline
                              & $\wp_{11}$ & $\wp_{12}$ & $\wp_{13}$ & $\wp_{22}$ & $\wp_{23}$ & $\wp_{33}$
                              & $\Delta$   & $Q_{1333}$
                              \\ \hline
$\bm{(2,7)}$\textbf{-case} & $-10$  & $-8$  &  $-6$      & $-6$       & $-4$       & $-2$  &$-12$&      \\
$\bm{(3,4)}$\textbf{-case} & $-10$  & $-7$  &  $-6$      & $-4$       & $-3$       & $-2$  &     & $-8$ \\ \hline
\end{tabular}
\end{center}

The sets of differential equations in this paper are presented in decreasing weight order as indicated by the bold number in brackets.  They have all been made available in text files in the supplementary material.  Note that they refer to the functions associated with cyclic curves, but that relations for the general curves can be derived in a similar fashion at a greater computational cost.

We first consider the set of differential equations to express the 4-index $\wp$-functions.
These \textit{\textbf{4-index relations}} are the generalisation of equation (\ref{eq:Intro_elliptic_diff2}) from the elliptic case.  We aim to express each 4-index $\wp$-function as a degree two polynomial in the 2-index $\wp$-functions, in comparison with equation (\ref{eq:Intro_elliptic_diff2}).  Then, through differentiation and manipulation of this set, we could express all higher index $\wp$-functions as polynomials in the 2 and 3-index $\wp$-functions, analogous to the elliptic case.  A complete set of such relations can be obtained for the $(2,7)$-case as first presented by Buchstaber, Enolskii and Leykin in \cite{bel97}.
\begin{align}
\bm{(-4)} \quad
\wp_{3333} &= 4\wp_{23} +4\wp_{33}\lambda_{{6}}+2\lambda_{{5}}+6\wp_{33}^{2}   \label{eq:27_p3333} \\
\bm{(-6)} \quad
\wp_{2333} &= 6\wp_{13} -2\wp_{22} +4\wp_{23} \lambda_{{6}}+6\wp_{23} \wp_{33} \label{eq:27_p2333}\\
&\,\,\, \vdots \nonumber \\
\bm{(-18)} \quad
\wp_{1112} &= 6\wp_{11} \wp_{12} -4\lambda_{{5}}\lambda_{{0}}-2\wp_{22}\lambda_{{1}}+6\wp_{13} \lambda_{{1}}
-8\wp_{23} \lambda_{{0}}+4\wp_{12} \lambda_{{2}} \nonumber \\
\bm{(-20)} \quad
\wp_{1111} &= 6\wp_{11}^{2}+2\lambda_{{3}}\lambda_{{1}}-8\lambda_{{4}}\lambda_{{0}}+4\wp_{11}\lambda_{{2}}
+4\wp_{12} \lambda_{{1}}-12\wp_{22} \lambda_{{0}}+16\wp_{13}\lambda_{{0}} \nonumber
\end{align}
The complete set is displayed in Appendix \ref{APP_4index}.
There are various ways to derive such relations, with a recent survey given in \cite{eeg10}.  A constructive way is to consider the basis for $\Gamma \big( J, \mathcal{O}(2 \Theta^{[2]} ) \big)$ and the set of 4-index Q-functions.  Each Q-function has poles of order at most two and so belongs to the vector space $\Gamma \big( J, \mathcal{O}(2 \Theta^{[2]} ) \big)$.  Hence each Q-function can be expressed as a linear combination of basis entries.  (The explicit linear combination can be identified using the $\sigma$-expansion as discussed in \cite{MEe09}).  In the $(2,7)$-case we have basis (\ref{eq:27_2pole}) containing 2-index $\wp$-functions and $\Delta$.  Hence we can substitute for the Q-functions and $\Delta$ using equations (\ref{eq:4iQ}) and (\ref{eq:27Delta}) to leave the desired set of 4-index relations.

The corresponding set of equations for the $(3,4)$-case was derived in Lemma 6.1 of \cite{eemop07} and are also presented for comparison in Appendix \ref{APP_4index}.
\begin{align*}
\bm{(-4)} \quad
\wp_{3333}  &= -3\wp_{22} +6  \wp_{33}^{2} \\
\bm{(-5)} \quad
\wp_{2333}  &= 3\wp_{33}\lambda_{{3}} +6\wp_{23} \wp_{33} \\
&\,\,\, \vdots \\
\bm{(-17)} \quad
\wp_{1112}  &= -Q_{{1333}}\lambda_{{1}}+6\wp_{33}\lambda_{{3}}\lambda_{{0}}+6\wp_{11} \wp_{12} \\
\bm{(-20)} \quad
\wp_{1111}  &= -4Q_{{1333}}\lambda_{{0}}-3\wp_{33} {\lambda_{{1}}}^{2}
+12\wp_{33}\lambda_{{2}}\lambda_{{0}}+6  \wp_{11}^{2}
\end{align*}
Note that here the function $Q_{1333}$ is used in some of the expressions.  The reason for its inclusion can be explained by considering the constructive method discussed above.  Since $Q_{1333}$ was used in basis (\ref{eq:34_2pole}) for $\Gamma \big( J, \mathcal{O}(2 \Theta^{[2]} ) \big)$ it will appear in the expressions following from this basis.
The ability to construct 4-index relations using only 2-index $\wp$-functions as in the $(2,7)$-case is a feature that appears to be unique to the hyperelliptic cases.  We have explicitly checked that such relations cannot be achieved at certain weights in the $(3,4)$-case.  A more appropriate definition for \textit{\textbf{4-index relations}} seems to be a set that expresses all the 4-index $\wp$-functions using a degree two polynomial in the fundamental basis functions.
(Note that the Q-functions used will only need to appear linearly.)

It is natural to next consider a set of differential equations to generalise equation (\ref{eq:Intro_elliptic_diff1}). Such a set should give expressions for the product of two 3-index $\wp$-functions and so we refer to them as \textit{\textbf{quadratic 3-index relations}}.  The natural generalisation would express each product as a degree three polynomial in 2-index $\wp$-functions, but as in the previous case, this appears to only be possible for the hyperelliptic functions.  We have again explicitly checked that such relations do not exist at certain weights in the $(3,4)$-case and so propose the modified definition of \textit{\textbf{quadratic 3-index relations}} to be a set of differential equations that expresses all the products of 3-index $\wp$-functions using a degree three polynomial in the fundamental basis functions.

The quadratic 3-index relations for the $(2,7)$-case were considered in \cite{bel97}, but a complete set was not presented.  In \cite{CA2008} the corresponding relations for functions associated to covariant curves have been considered, but again a complete set is not directly obtainable from the results published there.

\begin{theorem} \label{thm:Quad_27set}
The quadratic 3-index relations associated to the cyclic $(2,7)$-curve start with those below, with the complete set as displayed in Appendix \ref{SEC_APP27}.
\begin{align}
&\bm{(-6)} &  \wp_{333}^2 &= \textstyle 4 \wp_{33}^{3}
+ 4\lambda_{{4}}
+ 4\lambda_{{5}}\wp_{33}
+ 4\lambda_{{6}} \wp_{33}^{2}
- 4\wp_{13}
+ 4\wp_{22}
+ 4\wp_{33} \wp_{23} \label{eq:27_p333sq}
\\
&\bm{(-8)} & \wp_{233}\wp_{333} &= \textstyle 2 \big( 2\wp_{23}\wp_{33}^{2}
+ \lambda_{{3}}
+ \lambda_{{5}}\wp_{23}
+ 2\lambda_{{6}}\wp_{33}\wp_{23}
+ \wp_{12}
+ 2\wp_{33}\wp_{13}
- \wp_{33}\wp_{22}
+ \wp_{23}^{2} \big) \nonumber
\\
&\bm{(-10)} & \wp_{233}^{2} &= \textstyle 8\wp_{23}\wp_{13}
- 4\wp_{23}\wp_{22}
+ 4\wp_{11}
+ 4\lambda_{{2}}
+ 4\wp_{23}^{2}\wp_{33}
+ 4\lambda_{{6}}\wp_{23}^{2} \nonumber
\\
&\bm{(-10)} & \wp_{223}\wp_{333} &= \textstyle - 4\wp_{33}\wp_{12}
- 2\wp_{23}\wp_{13} + 4\wp_{23}\wp_{22}
+ 4\lambda_{{6}}\wp_{33}\wp_{13}
+ 2\lambda_{{5}}\wp_{33}\wp_{23}
+ 4\lambda_{{4}}\wp_{23} \nonumber \\
& & & \quad \textstyle - 2\wp_{11} + 2\lambda_{{5}}\wp_{13}
- 2\lambda_{{3}}\wp_{33}
+ 2\wp_{22}\wp_{33}^{2}
+ 2\wp_{23}^{2}\wp_{33} \nonumber
\\
&\bm{(-10)} & \wp_{133}\wp_{333} &= \textstyle -2\wp_{33}\wp_{12}
+2\wp_{23}\wp_{13}
+4\lambda_{{6}}\wp_{33}\wp_{13}
-2\wp_{11}+2\lambda_{{5}}\wp_{13}
+4\wp_{13}\wp_{33}^{2} \nonumber
\end{align}
\end{theorem}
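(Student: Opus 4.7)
The plan is to follow the constructive derivation used for the 4-index relations, adapted to the setting of degree-six polar behaviour. First, I would observe that each product $\wp_{ijk}(\bu)\wp_{lmn}(\bu)$ is an Abelian function whose poles lie only along $\kappa^{-1}(\Theta^{[2]})$ and have order at most six; consequently it is an element of the $216$-dimensional space $\Gamma(J,\mathcal{O}(6\Theta^{[2]}))$ by the Riemann-Roch theorem for Abelian varieties. By Sato-weight homogeneity with weights $(5,3,1)$ for $\bu$ in the $(2,7)$-case, each product has a definite negative weight, ranging from $-6$ for $\wp_{333}^2$ down to $-30$ for $\wp_{111}^2$, and the target identity must be homogeneous at that weight.

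Next, for each product I would write an ansatz consisting of all cubic monomials in the 2-index $\wp$-functions of matching weight, together with quadratic, linear, and constant correction terms whose weight deficit is absorbed by polynomials in the cyclic curve constants $\lambda_0,\dots,\lambda_6$. Since the number of terms at each fixed weight is finite, this reduces the problem to a linear algebra problem in the scalar coefficients. I would then substitute the Taylor expansion of $\sigma(\bu)$ from Lemma \ref{lem:sigexp} into both sides, compute the expansions using the polynomials $C_{6+2k}$, and solve the resulting linear system symbolically following the techniques of \cite{MEe09}. Once the coefficients are determined and the candidate identity is confirmed to further orders than those used for fitting, the equality follows: two elements of the finite-dimensional space $\Gamma(J,\mathcal{O}(6\Theta^{[2]}))$ that agree in their Taylor expansion at the origin must coincide. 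This procedure is then iterated over all $55$ distinct products $\wp_{ijk}\wp_{lmn}$ to produce the complete list displayed in Appendix \ref{SEC_APP27}.

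The principal obstacle is establishing that the chosen ansatz is already rich enough, that is, that every quadratic 3-index product actually lies in the span of cubic monomials in 2-index $\wp$-functions together with the lower-degree corrections. This is not automatic, since the cubic monomials span only a proper subspace of $\Gamma(J,\mathcal{O}(6\Theta^{[2]}))$, and indeed the paper notes that the analogous ansatz fails at certain weights in the $(3,4)$-case. I expect the existence of such purely cubic expressions to be a genuinely hyperelliptic phenomenon, ultimately traceable to the involution $y\mapsto -y$ on the curve, which imposes extra symmetry on the $\sigma$-expansion and on the resulting vector space decomposition. If the ansatz were to fail at some weight, the natural remedy would be to adjoin products of $\Delta$ with 2-index $\wp$-functions, since $\Delta$ is the remaining generator in the 2-pole basis (\ref{eq:27_2pole}); the substance of the theorem is precisely that this enlargement is not needed for the $(2,7)$-curve, which one verifies weight by weight through the explicit computation.
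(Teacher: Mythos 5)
Your proposal is correct and follows essentially the same route as the paper: the authors also derive these relations constructively from the $\sigma$-expansion, restricting the ansatz by Sato-weight homogeneity and using Definition \ref{def:nip} to pre-cancel the higher-order poles before fixing the remaining coefficients by linear algebra. One small remark: the published relations in Appendix \ref{SEC_APP27} do display terms like $\wp_{33}\Delta$, but since $\Delta$ is itself quadratic in the 2-index $\wp$-functions this is purely notational shorthand, so your purely cubic ansatz in the 2-index $\wp$-functions is exactly what is verified there.
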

\begin{proof}
Once again, these relations can be derived through a variety of methods as discussed in \cite{eeg10}.  They can again be found constructively using the $\sigma$-expansion, although in this case there is no simple linear algebra result to dictate that such relations exist.  Instead we may just search for them using arbitrary polynomials of 2-index $\wp$-functions.  The computations involved can be heavy and so simplifications are made by ensuring the polynomials are homogeneous in weight.  Additionally, it is often possible to allocate the cubic terms to cancel higher order poles using Definition \ref{def:nip} at a low computational cost.   \\
\end{proof}

\begin{lemma} \label{lem:27_Matrix}
Theorem \ref{thm:Quad_27set} may be written concisely as a determinantal formula,
\begin{equation} \label{Q27_Matrix}
\big( \bm{l}^{T} A \bm{k} \big)\big( \bm{l'}^{T} A \bm{k'} \big) = -\frac{1}{4} \left| \begin{array}{ccc}
H     & \bm{l'} & \bm{k'} \\
\bm{l}^T & 0      & 0      \\
\bm{k}^T & 0      & 0
\end{array} \right|,
\end{equation}
using the matrices
\begin{align*}
A &= \left[
\begin{array}{ccccc}
0                     & -\wp_{333}          & \wp_{233}  & -\wp_{223}+\wp_{133} & \wp_{222}-2\wp_{123} \\
\wp_{333}             & 0                   & -\wp_{133} & \wp_{123}            & -\wp_{122}+\wp_{113} \\
-\wp_{233}            & \wp_{133}           & 0          & -\wp_{113}           & \wp_{112}            \\
\wp_{223}-\wp_{133}   & -\wp_{123}           & \wp_{113}  & 0                    & -\wp_{111}           \\
-\wp_{222}+2\wp_{123} & \wp_{122}-\wp_{113} & -\wp_{112} & \wp_{111}            & 0
\end{array}
\right], \\
H &= \left[
\begin{array}{ccccc}
4\lambda_0 & 2\lambda_1           & -2\wp_{11}                     & -2\wp_{12}            & -2\wp_{13}  \\
2\lambda_1 & 4\lambda_2+4\wp_{11} & 2\lambda_3+2\wp_{12}           & -2\wp_{22}+4\wp_{13}  & -2\wp_{23}  \\
-2\wp_{11} & 2\lambda_3+2\wp_{12} & 4\lambda_4+4\wp_{22}-4\wp_{13} & 2\lambda_5+2\wp_{23}  & -2\wp_{33}  \\
-2\wp_{12} & -2\wp_{22}+4\wp_{13} & 2\lambda_5+2\wp_{23}           & 4\lambda_6+4\wp_{33}  & 2           \\
-2\wp_{13} & -2\wp_{23}           & -2\wp_{33}                     & 2                     & 0
\end{array}
\right]
\end{align*}
and arbitrary vectors $\bm{l},\bm{k},\bm{l'},\bm{k'}$ of dimension 5.  Each of the 55 relations in Appendix \ref{SEC_APP27} may be obtained using appropriate choices of the vectors.
\end{lemma}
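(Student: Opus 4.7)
The plan is to reduce the single determinantal identity \eqref{Q27_Matrix} to the $55$ scalar relations of Theorem~\ref{thm:Quad_27set} and its appendix, via multilinearity. Both sides of \eqref{Q27_Matrix} are linear in each of $\bm{l},\bm{k},\bm{l'},\bm{k'}$ and antisymmetric under $\bm{l}\leftrightarrow\bm{k}$ and under $\bm{l'}\leftrightarrow\bm{k'}$: on the left because $A$ is antisymmetric, and on the right because swapping two rows (or two columns) of the $7\times 7$ block matrix negates its determinant. It therefore suffices to verify the identity on standard basis vectors.

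Next, I would fix $\bm{l}=\bm{e}_a$, $\bm{k}=\bm{e}_b$, $\bm{l'}=\bm{e}_c$, $\bm{k'}=\bm{e}_d$ with $a<b$ and $c<d$ in $\{1,\ldots,5\}$. The left-hand side becomes the signed product $A_{ab}A_{cd}$, a product of two $3$-index $\wp$-functions (or short linear combinations thereof). For the right-hand side I would perform two successive Laplace expansions on the $7\times 7$ matrix: first along rows~$6$ and~$7$, whose unique nonzero entries lie in columns $a$ and $b$, producing the sign $(-1)^{13+a+b}$ together with a $5\times 5$ complementary minor; then along the last two columns of that $5\times 5$ minor (namely $\bm{l'}$ and $\bm{k'}$), whose unique nonzero rows are $c$ and $d$, producing the sign $(-1)^{9+c+d}$. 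A single term survives each expansion, with combined sign $(-1)^{a+b+c+d}$, and the residual factor is the $3\times 3$ submatrix of $H$ obtained by striking out rows $c,d$ and columns $a,b$. Thus \eqref{Q27_Matrix} is equivalent to the scalar family
\[
4\,A_{ab}A_{cd} \,=\, (-1)^{a+b+c+d+1}\,\det H\bigl[\,\{1,\ldots,5\}\setminus\{c,d\}\,;\,\{1,\ldots,5\}\setminus\{a,b\}\,\bigr],
\]
one identity for each unordered pair of $2$-element subsets of $\{1,\ldots,5\}$.

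The remaining step is matching. Since $A$ is $5\times 5$ antisymmetric it has $\binom{5}{2}=10$ independent entries, giving $\binom{10}{2}+10=55$ distinct unordered products $A_{ab}A_{cd}$, which is exactly the count of quadratic $3$-index relations collected in Theorem~\ref{thm:Quad_27set} and Appendix~\ref{SEC_APP27}. The entries of $A$ and $H$ have evidently been arranged so that each such $3\times 3$ minor of $H$, upon expansion, reproduces the cubic polynomial in $\wp_{ij}$ and $\lambda_k$ on the right of the corresponding relation. As a sanity check on the simplest case, taking $(a,b)=(c,d)=(1,2)$ with $A_{12}=-\wp_{333}$ yields $4\wp_{333}^2 = -\det H[\{3,4,5\};\{3,4,5\}]$; direct expansion of that minor returns $4(\wp_{33}^3+\lambda_4+\lambda_5\wp_{33}+\lambda_6\wp_{33}^2-\wp_{13}+\wp_{22}+\wp_{33}\wp_{23})$, matching \eqref{eq:27_p333sq}.

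The principal obstacle is not conceptual but the bookkeeping required to confirm all $55$ cases. A structural proof would be more satisfying — one naturally suspects a Jacobi-type determinantal identity underlying the formula, perhaps arising from a bracket structure on the Kummer embedding of the Jacobian — but absent such a framework, the most efficient route is a direct tabular verification, readily automated using the same $\sigma$-expansion that was used to establish Theorem~\ref{thm:Quad_27set}.
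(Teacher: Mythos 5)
Your reduction is sound as far as it goes, and in substance it is the same verification the paper performs: both arguments specialize $\bm{l},\bm{k},\bm{l'},\bm{k'}$ to standard basis vectors and check the resulting scalar identities against the relations already established in Theorem \ref{thm:Quad_27set} and Appendix \ref{SEC_APP27}. Your multilinearity/antisymmetry remark and the double Laplace expansion, giving $4\,A_{ab}A_{cd}=(-1)^{a+b+c+d+1}\det H[\{1,\dots,5\}\setminus\{c,d\};\{1,\dots,5\}\setminus\{a,b\}]$, is a tidy way of organizing that computation, and your sample case for $\wp_{333}^2$ comes out right (up to a factor-of-$4$ slip in how you phrase the value of the minor itself).

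The gap is in the matching step, and it concerns precisely the lemma's second assertion. The entries $A_{14}=-\wp_{223}+\wp_{133}$, $A_{15}=\wp_{222}-2\wp_{123}$ and $A_{25}=-\wp_{122}+\wp_{113}$ are not single 3-index functions, so a product $A_{ab}A_{cd}$ involving them is a linear combination of several monomials $\wp_{ijk}\wp_{lmn}$, and the corresponding $3\times3$ minor of $H$ equals the matching linear combination of appendix right-hand sides, not the right-hand side of a single ``corresponding relation''. For instance, taking $\bm{l}=\bm{l'}$ to be the first and $\bm{k}=\bm{k'}$ the fourth standard basis vector produces $(\wp_{223}-\wp_{133})^2$ on the left, not $\wp_{223}^2$. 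Hence your claimed one-to-one correspondence between the $55$ unordered products $A_{ab}A_{cd}$ and the $55$ relations fails monomial-by-monomial, and as written your argument does not deliver relations such as those for $\wp_{223}^2$ or $\wp_{133}\wp_{223}$ from a single choice of vectors. This is exactly the restriction the paper flags: one must first derive certain relations (e.g.\ $\wp_{133}^2$ from $A_{23}^2$) and substitute them to isolate the others; equivalently, one should note that the linear map sending the ten 3-index functions to the ten independent entries of $A$ is invertible, so the induced map on the $55$ quadratic monomials is invertible and the two families of identities are equivalent. With that additional substitution (or invertibility) remark, your plan is complete and coincides with the paper's proof.
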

\begin{proof}
The formula (\ref{Q27_Matrix}) may be verified directly by expanding and substituting with the relations of Theorem \ref{thm:Quad_27set}.  To derive the individual relations from the determinantal formula one must make suitable choices of the vectors  $\bm{l},\bm{k},\bm{l'},\bm{k'}$ with only one entry in each vector non-zero.  For example, setting
\[
\bm{k} = \bm{k}^T = (1,0,0,0,0), \qquad
\bm{l} = \bm{l}^T = (0,1,0,0,0)
\]
into equation (\ref{Q27_Matrix}) leaves relation (\ref{eq:27_p333sq}) for $\wp_{333}^2$.  One restriction on this approach is that some equations must be derived before others.  For example, the equation for $\wp_{133}^2$ must be derived before the equation for $\wp_{133}\wp_{223}$ as the choice of vectors to give $\wp_{133}\wp_{223}$ on the left hand side will introduce $\wp_{133}^2$ as well.  However, when this is the case it is always possible to derive one of the other quadratic term independently and then substitute for it to find the other.  Hence, all the 55 individual relations may be derived from the formula. \\
\end{proof}
Lemma \ref{lem:27_Matrix} was inspired by the results of Athorne in \cite{CA2008} on covariant hyperelliptic curves and the corresponding $\wp$-functions.  These curves and functions belong to generic families permuted under an $sl_2$ action which can be easily mapped to the standard curves and functions considered here.  Recently, the covariant result corresponding to Lemma \ref{lem:27_Matrix} was developed in \cite{CA11}.

The corresponding quadratic relations for the $(3,4)$-case were first considered in \cite{eemop07}, but again a complete set has not been available until now.  These relations include the basis function $Q_{1333}$ which occurs only linearly, or multiplied by a single 2-index $\wp$-function.  A determinantal version of these equations, one similar to equation (\ref{Q27_Matrix}), has not been identified.  One may follow from development of the corresponding covariant theory to trigonal curves.

\newpage

\begin{theorem} \label{thm:Quad_34set}
The quadratic 3-index relations associated to the cyclic $(3,4)$-curve starts with those below with the complete set as given in Appendix \ref{APP_34quad}.
\begin{align*}
&\bm{(-6)} &  \wp_{333}^2 &= \textstyle 4\wp_{33}^3 + 4\wp_{13} + \wp_{23}^2 - 4\wp_{22}\wp_{33}
\\
&\bm{(-7)} &  \wp_{233}\wp_{333} &= \textstyle 4\wp_{23}\wp_{33}^2
- 2\wp_{12} - \wp_{22}\wp_{23} + 2\lambda_{3}\wp_{33}^2
\\
&\bm{(-8)} &  \wp_{233}^2 &= \textstyle 4\wp_{23}^2\wp_{33} + \wp_{22}^2
+ 4\lambda_{3}\wp_{23}\wp_{33} + 4\wp_{33}\lambda_{2} - \frac{4}{3}Q_{1333}
\\
&\bm{(-8)} &  \wp_{223}\wp_{333} &= \textstyle 2\wp_{23}^2\wp_{33}
+ 2\wp_{22}\wp_{33}^2 - 2\wp_{22}^2 + \lambda_{3}\wp_{23}\wp_{33}
+ 4\wp_{13}\wp_{33} + \frac{2}{3}Q_{1333}
\\
&\bm{(-9)} &  \wp_{223}\wp_{233} &= \textstyle 2\wp_{13}\lambda_{3} + 2\lambda_{1}
+ 4\wp_{13}\wp_{23} + 2\wp_{23}\wp_{22}\wp_{33} + 2\wp_{23}^3
\\& & &\quad \textstyle
+ \wp_{22}\wp_{33}\lambda_{3} + 2\wp_{23}^2\lambda_{3} + 2\wp_{23}\lambda_{2}
\\
&\bm{(-9)} &  \wp_{222}\wp_{333} &= \textstyle 6\wp_{23}\wp_{22}\wp_{33}
- 4\wp_{12}\wp_{33} - 2\wp_{23}^3 + 4\wp_{22}\wp_{33}\lambda_{3}
- 8\wp_{13}\wp_{23}
- \wp_{23}^2\lambda_{3} - 4\wp_{13}\lambda_{3}
\end{align*}
\end{theorem}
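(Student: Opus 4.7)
The plan is to mirror the constructive strategy behind Theorem \ref{thm:Quad_27set}, modified to accommodate the presence of $Q_{1333}$ in the fundamental basis (\ref{eq:34_2pole}) of the $(3,4)$-case. Each product $\wp_{ijk}\wp_{lmn}$ is an Abelian function which, via Definition \ref{def:nip}, has poles of order at most six along $\kappa^{-1}(\Theta^{[2]})$. I would aim to express each such product as a weight-homogeneous polynomial of degree at most three in the 2-index $\wp$-functions, augmented by a linear combination of $Q_{1333}$ and of terms of the form $\wp_{ij}Q_{1333}$, together with curve-parameter constants.

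For each target product, I would first set up a weight-homogeneous ansatz of this shape, using the Sato weights tabulated before the theorem together with $\mathrm{wt}(Q_{1333})=-8$ to enumerate all admissible monomials; this produces a finite-dimensional linear problem. A first round of reduction then comes from writing out the Laurent expansion of each cubic term in the ansatz via Definition \ref{def:nip} and demanding that the coefficients of the singular terms of order greater than three cancel on both sides. Many of the cubic coefficients are either fixed or eliminated entirely at this stage, at relatively low computational cost. The remaining undetermined coefficients are pinned down by substituting the Taylor expansion of Lemma \ref{lem:sigexp} into both sides and equating Taylor coefficients up to sufficiently high order, reducing to a linear system which I would solve.

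Proceeding weight-by-weight from $-6$ downwards produces the displayed relations and, continuing, the full list in Appendix \ref{APP_34quad}. Linear independence of the basis entries, checked via the $\sigma$-expansion in the spirit of the proof of Theorem \ref{thm:27_3pole}, ensures uniqueness of the resulting representation in each case.

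The main obstacle is that, unlike the hyperelliptic $(2,7)$-case, no ansatz built purely from polynomials in 2-index $\wp$-functions will close: $Q_{1333}$ must genuinely enter the right-hand sides (appearing only linearly, or multiplied by a single 2-index $\wp$-function), and there is no a priori linear-algebraic argument guaranteeing that even this enlarged ansatz suffices. Existence must therefore be checked weight by weight, keeping enough terms of the $\sigma$-expansion to make each linear system determined. A secondary difficulty is the absence of a determinantal generating identity analogous to Lemma \ref{lem:27_Matrix} in the $(3,4)$ setting, so the relations must be produced one at a time, and the homogeneity-in-weight reduction is indispensable for keeping each ansatz computationally tractable.
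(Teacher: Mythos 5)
Your proposal matches the paper's own route: the paper proves Theorem \ref{thm:Quad_34set} by the same constructive method as Theorem \ref{thm:Quad_27set}, namely a weight-homogeneous ansatz (here enlarged to admit $Q_{1333}$ linearly or times a single 2-index $\wp$-function), with the higher-order poles cancelled via Definition \ref{def:nip} and the remaining coefficients fixed by the $\sigma$-expansion. Your remarks on the necessity of $Q_{1333}$ and the lack of a determinantal analogue of Lemma \ref{lem:27_Matrix} also agree with the paper's discussion, so there is nothing to correct.
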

\begin{proof}
These relations can be derived using a variety of methods as in Theorem \ref{thm:Quad_27set}.\\
\end{proof}

The final set of differential equations that we consider here are a set bilinear in the 2 and 3-index $\wp$-functions.  Due to the parity properties of the $\wp$-functions we know that these \textit{bilinear relations} cannot contain any constant terms, or terms dependent only on the 2-index $\wp$-functions.  There is no analogue of these relations in the genus one case.  Three of the $(2,7)$ bilinear relations displayed below were derived in \cite{bel97} as part of a larger computation, but the complete set below is a new result.

\begin{theorem} \label{thm:BL_27set}
Every bilinear relation associated with the cyclic $(2,7)$-curve may be given as a linear combination of the 24 below.
\begin{align*}
\bm{(-7)} \quad 0&= \textstyle
\wp_{233} \wp_{33}
+\wp_{223}
-\wp_{333} \wp_{23}
-\wp_{133}
\\
\bm{(-9)} \quad 0&= \textstyle
\wp_{133} \wp_{33}
+\wp_{123}
-\wp_{333} \wp_{13}
\\
\bm{(-9)} \quad 0&= \textstyle 
-2\wp_{33} \wp_{223}
+\wp_{22}\wp_{333}
+\wp_{23} \wp_{233}
+\lambda_{{5}}\wp_{233}
-4\wp_{123}
+2\lambda_{{6}}\wp_{133}
+\wp_{222} -2\lambda_{{6}}\wp_{223}
\\
\bm{(-11)} \quad 0&= \textstyle
-\wp_{133} \wp_{23}
-\wp_{113}
+\wp_{233} \wp_{13}
\\
\bm{(-11)} \quad 0&= \textstyle 
\wp_{33} \wp_{222}
-2\wp_{22} \wp_{233}
+\wp_{23} \wp_{223}
+\lambda_{{5}}\wp_{223}
-\wp_{122}
+2\lambda_{{6}}\wp_{123}
+3\wp_{113}
-2\lambda_{{5}}\wp_{133} \\
&\quad \textstyle +\wp_{333} \lambda_{{3}}
-2\wp_{233} \lambda_{{4}}
\\
\bm{(-11)} \quad 0&= \textstyle 
-\wp_{133} \wp_{23}
-2\wp_{33}\wp_{123}
-3\wp_{113} +\wp_{122}
-2\lambda_{{6}}\wp_{123}
+\wp_{12} \wp_{333}
+2\wp_{233} \wp_{13}
+\lambda_{{5}}\wp_{133}
\\
\bm{(-13)} \quad 0&= \textstyle
\wp_{112} -\wp_{222} \wp_{23}
+\wp_{22} \wp_{223}
+\wp_{233}\lambda_{{3}}+2\wp_{133} \wp_{13}
-2\wp_{22} \wp_{133}
+2\wp_{12} \wp_{233} \\
&\quad \textstyle -2\wp_{11} \wp_{333}
-2\wp_{333} \lambda_{{2}}
-2\lambda_{{6}}\wp_{113}
\\
\bm{(-13)} \quad 0&= \textstyle  
-\wp_{123} \wp_{23}
-\wp_{133} \wp_{13}
+\wp_{13} \wp_{223}
+\wp_{33} \wp_{113}
\\
\bm{(-13)} \quad 0&= \textstyle 
-\wp_{22} \wp_{133}
-2\wp_{123} \wp_{23}
+\wp_{112} -2\lambda_{{6}}\wp_{113}
+\wp_{12} \wp_{233}
+2\wp_{13} \wp_{223}
\\
\bm{(-13)} \quad 0&= \textstyle 
\wp_{122} \wp_{33}
-2\wp_{22} \wp_{133}
+\lambda_{{5}}\wp_{123}
+\wp_{133} \wp_{13}
+\wp_{13} \wp_{223}
-\wp_{11} \wp_{333}
-2\wp_{133} \lambda_{{4}}
\\
\bm{(-15)} \quad 0&= \textstyle
-\lambda_{{5}}\wp_{113}
+2\wp_{133}\lambda_{{3}}
+2\wp_{12} \wp_{223}
+\wp_{22} \wp_{123}
-3\wp_{333}\lambda_{{1}}
+2\wp_{123} \wp_{13} \\
&\quad \textstyle -2\wp_{222} \wp_{13}
-\wp_{122} \wp_{23}
-\wp_{112} \wp_{33}
-\wp_{11} \wp_{233}
\\
\bm{(-15)} \quad 0&= \textstyle 
-\lambda_{{5}}\wp_{113}
+\wp_{133} \lambda_{{3}}
+\wp_{12} \wp_{223}
-\wp_{333} \lambda_{{1}}
+\wp_{111}
-\wp_{122}\wp_{23}
\\
\bm{(-15)} \quad 0&= \textstyle 
-\wp_{133} \lambda_{{3}}
+4\wp_{123} \wp_{13}
-2\wp_{12} \wp_{133}
-\wp_{11} \wp_{233}
-\wp_{23} \wp_{113}
+\wp_{122} \wp_{23}\\
&\quad \textstyle +\wp_{12} \wp_{223}
-\wp_{222} \wp_{13}
-\wp_{22} \wp_{123}
\\
\bm{(-15)} \quad 0&= \textstyle 
\wp_{133} \lambda_{{3}}
+\wp_{12} \wp_{133}
+\wp_{11} \wp_{233}
-\wp_{23} \wp_{113}
-\wp_{112} \wp_{33}
-\lambda_{{5}}\wp_{113}
\\
\bm{(-17)} \quad 0&= \textstyle
-\lambda_{{5}}\wp_{112}
+\wp_{12} \wp_{222}
-\wp_{22} \wp_{122}
-\wp_{233} \lambda_{{1}}
-4\wp_{13} \wp_{113}
+2\wp_{13} \wp_{122}
+2\wp_{22} \wp_{113}\\
&\quad -2\wp_{23} \wp_{112}
-2\wp_{12} \wp_{123}
+2\wp_{33} \wp_{111}
+2\wp_{11} \wp_{133}
+2\wp_{133} \lambda_{{2}}
+2\lambda_{{6}}\wp_{111}
\\
\bm{(-17)} \quad 0&= \textstyle 
-\wp_{22} \wp_{113}
+\wp_{11} \wp_{223}
+3\wp_{12} \wp_{123}
-\wp_{233} \lambda_{{1}}
+2\wp_{133}\lambda_{{2}}
-4\wp_{333} \lambda_{{0}}
+\wp_{123} \lambda_{{3}}\\
&\quad +2\wp_{13} \wp_{113}
-2\wp_{13} \wp_{122}
-\wp_{23}\wp_{112}
-\wp_{33} \wp_{111}
-\wp_{11} \wp_{133}
-2\wp_{113}\lambda_{{4}}
\\
\bm{(-17)} \quad 0&= \textstyle 
\wp_{22} \wp_{113}
+2\wp_{12} \wp_{123}
-\wp_{233} \lambda_{{1}}
+2\wp_{133} \lambda_{{2}}
-4\wp_{333} \lambda_{{0}}
-2\wp_{13} \wp_{122}
-\wp_{23} \wp_{112}
\\
\bm{(-17)} \quad 0&= \textstyle 
-\wp_{233} \lambda_{{1}}
-3\wp_{13} \wp_{113}
+3\wp_{11} \wp_{133}
+2\wp_{133} \lambda_{{2}}
+2\wp_{333}\lambda_{{0}}
-\wp_{23} \wp_{112}
-\wp_{12} \wp_{123}\\
&\quad +\wp_{13} \wp_{122}
+\wp_{22} \wp_{113}
\end{align*}
\begin{align*}
\bm{(-19)} \quad 0&= \textstyle
-2\wp_{22} \wp_{112}
+\wp_{11}\wp_{222}
+\wp_{12} \wp_{122}
-\lambda_{{1}}\wp_{223}
+2\lambda_{{2}}\wp_{123}
-4\wp_{233} \lambda_{{0}}
+\wp_{122} \lambda_{{3}}\\
&\quad +\lambda_{{5}}\wp_{111}
-2\lambda_{{4}}\wp_{112}
-2\wp_{113} \lambda_{{3}}
+3\wp_{133} \lambda_{{1}}
\\
\bm{(-19)} \quad 0&= \textstyle 
\wp_{133} \lambda_{{1}}
+\wp_{12} \wp_{113}
-2\wp_{233} \lambda_{{0}}
-\wp_{13} \wp_{112}
\\
\bm{(-19)} \quad 0&= \textstyle 
-3\wp_{133} \lambda_{{1}}
+\lambda_{{1}}\wp_{223}
-\wp_{12} \wp_{113}
-2\wp_{11} \wp_{123}
-2\lambda_{{2}}\wp_{123}
+4\wp_{233} \lambda_{{0}}
+\wp_{113} \lambda_{{3}} \\
&\quad +2\wp_{13} \wp_{112}
+\wp_{23} \wp_{111}
\\
\bm{(-21)} \quad 0&= \textstyle
-4\lambda_{{1}}\wp_{123}
+\lambda_{{1}}\wp_{222}
+\wp_{12} \wp_{112}
-2\wp_{11} \wp_{122}
-2\wp_{122} \lambda_{{2}}
+4\wp_{223} \lambda_{{0}}
+\wp_{22} \wp_{111} \\
&\quad +2\lambda_{{2}}\wp_{113}
-4\wp_{133} \lambda_{{0}}
+\wp_{112} \lambda_{{3}}
\\
\bm{(-21)} \quad 0&= \textstyle 
-\wp_{11} \wp_{113} -\lambda_{{1}}\wp_{123}
-2\wp_{133} \lambda_{{0}}+2\wp_{223} \lambda_{{0}}
+\wp_{13} \wp_{111}
\\
\bm{(-23)} \quad 0&= \textstyle
-\wp_{11} \wp_{112}
-\wp_{122} \lambda_{{1}}
-4\lambda_{{0}}\wp_{123}
+2\lambda_{{0}} \wp_{222}
+\lambda_{{1}}\wp_{113}
+\wp_{12} \wp_{111}
\end{align*}
\end{theorem}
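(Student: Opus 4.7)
The plan is first to verify that each of the 24 stated identities holds, and then to show via a weight-stratified rank computation that these span all bilinear relations.

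For the first step, observe that substituting Definition \ref{def:nip} into any term $\wp_{ij}\wp_{klm}$ shows the function has poles of order at most five along $\kappa^{-1}(\Theta^{[2]})$, so each candidate identity is an equality of Abelian functions in the finite-dimensional vector space $\Gamma\bigl(J,\mathcal{O}(5\Theta^{[2]})\bigr)$. Such a function vanishes identically if and only if its Laurent expansion about the origin vanishes, so using the $\sigma$-expansion from Lemma \ref{lem:sigexp} (which is available through $C_{38}$) it suffices to substitute and check cancellation order-by-order in Maple, exactly as detailed in \cite{MEe09}.

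For the second step, I would exploit Sato-weight homogeneity to reduce the problem to a finite computation at each weight. Since 2-index $\wp$-functions have even parity while 3-index ones are odd, any bilinear relation is an odd-parity Abelian function; combining this with the parity of the Sato weights shows the only relevant weights are the odd values $-w \in \{-7,-9,-11,\ldots,-23\}$. At each such weight, I would enumerate the finite collection $\mathcal{M}_w$ of admissible monomials, namely weighted products $\lambda^{\bm{a}}\wp_{ij}\wp_{klm}$ and linear terms $\lambda^{\bm{a}}\wp_{klm}$ of total Sato weight $-w$. The space of bilinear relations at weight $-w$ is precisely the kernel of the linear map
\[
\Phi_w : \mathbb{C}^{|\mathcal{M}_w|} \longrightarrow \Gamma\bigl(J,\mathcal{O}(5\Theta^{[2]})\bigr)
\]
sending each monomial to its $\sigma$-expansion. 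Computing $\dim\ker\Phi_w$ by row reduction and then verifying that the relations listed at weight $-w$ are linearly independent and have cardinality equal to this dimension finishes the argument. The counts $1,2,3,4,4,4,3,2,1$ across the nine relevant weights account for the total of 24 relations.

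The principal obstacle is purely computational rather than conceptual: the set $\mathcal{M}_w$ is largest at the middle weights $-15, -17, -19$, and one must be careful to expand $\sigma(\bu)$ to sufficiently high order to detect all genuine relations without being deceived by accidental cancellations in the leading Schur--Weierstrass terms. The weight bounds implicit in Lemma \ref{lem:minwt} and the techniques of \cite{MEe09} supply explicit cut-offs guaranteeing correctness of the rank computation.
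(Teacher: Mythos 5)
You have a genuine gap in the completeness half of your argument. The theorem asserts that \emph{every} bilinear relation is a combination of the 24 listed, and your plan is to stratify by Sato weight, compute $\dim\ker\Phi_w$ at each weight, and check that the listed relations at that weight have matching cardinality. But your restriction to the nine odd weights $-7,\dots,-23$ does not follow from parity: parity only tells you all bilinear monomials have odd weight. Monomials such as $\wp_{11}\wp_{111}$ live at weight $-25$, and once you allow the $\lambda$-dressed monomials $\lambda^{\bm{a}}\wp_{ij}\wp_{klm}$ (which you must, since the listed relations contain them) the admissible weights are unbounded below, so the computation you describe is not even finite. Moreover at weights below $-23$ the kernels of $\Phi_w$ are certainly nontrivial — they contain $\lambda$-multiples of the listed relations (e.g.\ $\lambda_6$ times the weight $-23$ relation sits at weight $-25$) — so your proposed check ``cardinality of listed relations equals $\dim\ker\Phi_w$'' fails there as stated. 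To make the spanning claim precise you must either work over $\C[\lambda_0,\dots,\lambda_6]$ modulo the module generated by the 24 and supply a weight bound beyond which no new generators can occur, or give a structural argument; neither appears in your proposal.

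The verification half also leans on an unsubstantiated cut-off: checking a candidate identity by truncated $\sigma$-expansion is only rigorous if you know how many expansion coefficients determine an element of $\Gamma\big(J,\mathcal{O}(5\Theta^{[2]})\big)$, and Lemma \ref{lem:minwt} does not provide this — it bounds the weights of basis functions, not expansion orders. The paper sidesteps both difficulties by a different and cheaper route: each bilinear relation is obtained from the already-proven 4-index relations by equating mixed partial derivatives of 5-index $\wp$-functions, for instance substituting (\ref{eq:27_p3333}) and (\ref{eq:27_p2333}) into $\partial_2(\wp_{3333})-\partial_3(\wp_{2333})=0$ yields the weight $-7$ relation. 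This produces exact identities with no expansion checking at all, and the same cross-differentiation structure is what underlies the generation claim. If you want to keep your rank-computation strategy for completeness, you should at least combine it with this construction so that exactness is automatic and the only computational issue is the (properly bounded) spanning argument.
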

\begin{proof}
The simplest way to construct these relations is through cross multiplication of the 4-index relations.  For example, substituting using equations (\ref{eq:27_p3333}) and (\ref{eq:27_p2333}) into
\[
\frac{\partial}{\partial u_2} \big( \wp_{3333} \big) - \frac{\partial}{\partial u_3} \big( \wp_{2333} \big) = 0
\]
gives the first relation  in the set above. \\
\end{proof}

The bilinear relations in the $(3,4)$-case were considered in \cite{eemop07} but again, a complete set was not available until now.

\begin{theorem} \label{thm:BL_34set}
Every bilinear relation associated with the cyclic $(3,4)$-curve may be given as a linear combination of the 21 below.
\begin{align*}
\bm{(-6)} \quad 0&= \textstyle  \wp_{222} +\lambda_{{3}}\wp_{333}
+ 2\wp_{23} \wp_{333} - 2\wp_{33} \wp_{233} \\
\bm{(-7)} \quad 0&= \textstyle  2\wp_{133} +\wp_{22} \wp_{333} +\wp_{23} \wp_{233}
- 2\wp_{33} \wp_{223}
\\
\bm{(-8)} \quad 0&= \textstyle  -4\wp_{123} +4\wp_{22} \wp_{233} -2\wp_{23} \wp_{223}
- 2\wp_{33} \wp_{222} \\
\bm{(-9)} \quad 0&= \textstyle  4\wp_{122} - \lambda_{{3}}\wp_{222} -  {\lambda_{{3}}}^{2}\wp_{333}
+ 4\lambda_{{2}}\wp_{333} - 2\wp_{23} \wp_{222} + 2\wp_{22} \wp_{223}
\\ &\qquad \textstyle
+ 8\wp_{13} \wp_{333} - 8\wp_{33} \wp_{133}
\\
\bm{(-10)} \quad 0&= \textstyle  -\wp_{12} \wp_{333} +\wp_{23} \wp_{133}
+ 2\wp_{33} \wp_{123} - 2\wp_{13} \wp_{233} \\
\bm{(-11)} \quad 0&= \textstyle  4\wp_{113} -3\wp_{12} \wp_{233} -\wp_{22} \wp_{133}
+ 2\wp_{23} \wp_{123} + 2\wp_{33} \wp_{122} \\
\bm{(-11)} \quad 0&= \textstyle  \wp_{113} -3\wp_{13} \wp_{223} + 2\wp_{22} \wp_{133}
+ 2\wp_{23} \wp_{123} - \wp_{33} \wp_{122} \\
\bm{(-12)} \quad 0&= \textstyle  \lambda_{{3}} \big( \wp_{13}\wp_{333} - \wp_{33}\wp_{133} \big)
+ \lambda_{{1}}\wp_{333} +\wp_{12} \wp_{223} - \wp_{23} \wp_{122} \\
\bm{(-12)} \quad 0&= \textstyle \wp_{13} \wp_{222} -\wp_{112}
- 2\lambda_{{3}} \big( \wp_{13} \wp_{333} -\wp_{33} \wp_{133} \big)
- 2\lambda_{{1}}\wp_{333}
- 2\wp_{22} \wp_{123} +\wp_{23} \wp_{122} \\
\bm{(-13)} \quad 0&= \textstyle  3\lambda_{{3}} \big( \wp_{13} \wp_{233} -\wp_{23} \wp_{133} \big)
- 2\lambda_{{2}}\wp_{133} +\lambda_{{1}}\wp_{233} + 3\wp_{12} \wp_{222}
\\ &\qquad \textstyle
- 3\wp_{22} \wp_{122} + 4\wp_{11} \wp_{333} + 4\wp_{13} \wp_{133} - 8\wp_{33} \wp_{113}
\\
\bm{(-14)} \quad 0&= \textstyle  -4\wp_{12} \wp_{133} + 4\wp_{13} \wp_{123}
- 2\wp_{23} \wp_{113} +2\wp_{33} \wp_{112}
\\
\bm{(-14)} \quad 0&= \textstyle  \frac{9}{2}\lambda_{{3}}\wp_{113} -3\lambda_{{2}}\wp_{123}
+ \frac{3}{2}\lambda_{{1}}\wp_{223} -3\wp_{11} \wp_{233} - \wp_{12} \wp_{133} - 2\wp_{13} \wp_{123}
\\ &\qquad \textstyle
+ 4\wp_{23} \wp_{113} + 2\wp_{33} \wp_{112}
\\
\bm{(-15)} \quad 0&= \textstyle  -\wp_{11} \wp_{223} -\frac{2}{3}\wp_{12} \wp_{123}
- \frac{1}{3}\wp_{13} \wp_{122} +\frac{2}{3}\wp_{22} \wp_{113} + \frac{4}{3}\wp_{23} \wp_{112}
+ \frac{1}{3}\lambda_{{1}}\wp_{222} -\wp_{111} \\
&\qquad \textstyle + \frac{3}{2}\lambda_{{3}}\wp_{112}
- \frac{2}{3}\lambda_{{2}} \big( \wp_{13}\wp_{333} -\wp_{33} \wp_{133} \big)
- \lambda_{{2}}\wp_{122}
- \frac{1}{6}\lambda_{{3}}\lambda_{{1}}\wp_{333} -\frac{4}{3}\lambda_{{0}}\wp_{333}
\\
\bm{(-15)} \quad 0&= \textstyle  - \frac{2}{3}\wp_{12} \wp_{123} + \frac{2}{3}\wp_{13} \wp_{122} -\frac{1}{3}\wp_{22} \wp_{113} + \frac{1}{3}\wp_{23} \wp_{112} \\
&\qquad \textstyle - \frac{2}{3}\lambda_{{2}} \big( \wp_{13} \wp_{333} - \wp_{33} \wp_{133} \big)
- \frac{1}{6}\lambda_{{1}}\wp_{222} - \frac{1}{6}\lambda_{{3}}\lambda_{{1}}\wp_{333}
- \frac{4}{3}\lambda_{{0}}\wp_{333}
\\
\bm{(-16)} \quad 0&= \textstyle
-\lambda_{{3}} \big( \wp_{11} \wp_{333} + \wp_{13} \wp_{133} -2\wp_{33} \wp_{113} \big)
- 2\lambda_{{2}} \big( \wp_{13} \wp_{233} - \wp_{23} \wp_{133} \big) \\
&\qquad \textstyle + 3\lambda_{{1}}\wp_{133} - 4\lambda_{{0}}\wp_{233}
- \wp_{11} \wp_{222} -\wp_{12} \wp_{122} + 2\wp_{22} \wp_{112}
\\
\bm{(-17)} \quad 0&= \textstyle  \frac{2}{3}\lambda_{{2}}\wp_{113}
- \frac{4}{3}\lambda_{{1}}\wp_{123} + 2\lambda_{{0}}\wp_{223} - \frac{4}{3}\wp_{11} \wp_{133}
+ \frac{2}{3}\wp_{13} \wp_{113} + \frac{2}{3}\wp_{33} \wp_{111}
\end{align*}
\begin{align*}
\bm{(-18)} \quad 0&= \textstyle  -\lambda_{{3}}\lambda_{{0}}\wp_{333}
+ \lambda_{{0}}\wp_{222} +\frac{2}{3}\lambda_{{2}}\wp_{112}
- \frac{4}{3}\wp_{11} \wp_{123} + \frac{1}{3}\wp_{12} \wp_{113}  \\
&\qquad \textstyle + \frac{1}{3}\wp_{13} \wp_{112}
+ \frac{2}{3}\wp_{23} \wp_{111} - \frac{4}{3}\lambda_{{1}}\wp_{122}
- \lambda_{{1}} \big( \wp_{13} \wp_{333} -\wp_{33} \wp_{133} \big)
\\
\bm{(-18)} \quad 0&= \textstyle  -\lambda_{{3}}\lambda_{{0}}\wp_{333}
- 2\lambda_{{0}}\wp_{222} - \frac{1}{3}\lambda_{{2}}\wp_{112}
- \frac{5}{3}\wp_{12} \wp_{113} + \frac{2}{3}\wp_{11} \wp_{123}  \\
&\qquad \textstyle + \frac{4}{3}\wp_{13} \wp_{112}
- \frac{1}{3}\wp_{23} \wp_{111} + \frac{2}{3}\lambda_{{1}}\wp_{122}
-\lambda_{{1}} \big( \wp_{13} \wp_{333} -\wp_{33} \wp_{133} \big)
\\
\bm{(-19)} \quad 0&= \textstyle  \frac{8}{3}\lambda_{{0}}\wp_{133}
- \frac{4}{9}{\lambda_{{2}}}^{2}\wp_{133} - 2\lambda_{{1}} \big( \wp_{13} \wp_{233}
- \wp_{23} \wp_{133} \big)
+\wp_{13} \wp_{133} -2\wp_{33} \wp_{113} \big)  \\
&\qquad \textstyle + \frac{2}{9}\lambda_{{2}}\lambda_{{1}}\wp_{233}
+ \frac{4}{3}\lambda_{{3}}\lambda_{{1}}\wp_{133}
- 2\lambda_{{3}}\lambda_{{0}}\wp_{233} - \frac{4}{3}\wp_{11} \wp_{122}
+ \frac{2}{3}\wp_{12} \wp_{112} + \frac{2}{3}\wp_{22} \wp_{111} \\
\bm{(-21)} \quad 0&= \textstyle  \frac{1}{2}{\lambda_{{1}}}^{2}\wp_{333}
- 2\lambda_{{2}}\lambda_{{0}}\wp_{333}
- 3\lambda_{{0}}\wp_{122} - 4\lambda_{{0}} \big( \wp_{13} \wp_{333}
- \wp_{33} \wp_{133} \big)
\\ &\qquad \textstyle
+ \frac{1}{2}\lambda_{{1}}\wp_{112} + \wp_{13} \wp_{111} - \wp_{11} \wp_{113}
\\
\bm{(-22)} \quad 0&= \textstyle  -\frac{1}{3}\lambda_{{2}}\lambda_{{1}}\wp_{133}
- \wp_{11} \wp_{112} + \wp_{12} \wp_{111}
- 4\lambda_{{0}} \big( \wp_{13} \wp_{233} -\wp_{23} \wp_{133} \big) \\
&\qquad \textstyle
- \frac{1}{3}\lambda_{{1}} \big( \wp_{11} \wp_{333} + \wp_{13} \wp_{133} - 2\wp_{33} \wp_{113} \big)
+ \frac{2}{3}{\lambda_{{1}}}^{2}\wp_{233}
+ 3\lambda_{{3}}\lambda_{{0}}\wp_{133} - 2\lambda_{{2}}\lambda_{{0}}\wp_{233}
\end{align*}
\end{theorem}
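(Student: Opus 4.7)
The strategy mirrors that of Theorem \ref{thm:BL_27set}: exploit the identity $\partial_m \wp_{ijk\ell} = \partial_\ell \wp_{ijkm}$ together with the complete set of 4-index relations for the cyclic $(3,4)$-curve from Lemma 6.1 of \cite{eemop07} (reproduced in Appendix \ref{APP_4index}). For every compatible pair of 4-index expressions $\wp_{ijk\ell} = P_1$ and $\wp_{ijkm} = P_2$, the quantity $\partial_m P_1 - \partial_\ell P_2$ must vanish, producing an equation among 3-index $\wp$-functions and products of 2-index with 3-index $\wp$-functions. The simplest instance pairs $\wp_{3333} = -3\wp_{22} + 6\wp_{33}^{2}$ with $\wp_{2333} = 3\wp_{33}\lambda_{3} + 6\wp_{23}\wp_{33}$: one computes
\[
\partial_{2}\!\big(\!-3\wp_{22} + 6\wp_{33}^{2}\big) - \partial_{3}\!\big(3\wp_{33}\lambda_{3} + 6\wp_{23}\wp_{33}\big) = -3\wp_{222} + 6\wp_{33}\wp_{233} - 3\lambda_{3}\wp_{333} - 6\wp_{23}\wp_{333},
\]
and setting this to zero (dividing by $-3$) recovers the first $(-6)$-weight relation of the theorem. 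Running through every admissible pair of 4-index relations at each weight level produces the full list of 21 identities, possibly up to elementary linear combination.

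A complication absent from the $(2,7)$-case is that several of the 4-index relations for the $(3,4)$-curve involve the basis function $Q_{1333}$. Cross-differentiation then introduces terms of the form $\partial_{i} Q_{1333}$, which must be eliminated to reach the purely bilinear form asserted in the theorem. This is accomplished by invoking equation (\ref{eq:4iQ}) to write $\partial_{i} Q_{1333} = \wp_{i1333} - 6\wp_{i13}\wp_{33} - 6\wp_{13}\wp_{i33}$ and then substituting for the 5-index $\wp$-function using the derivative of the relevant 4-index relation. After this substitution the remaining expression is genuinely bilinear in the 2-index and 3-index $\wp$-functions.

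To establish the completeness assertion, namely that \emph{every} bilinear relation is a linear combination of the listed 21, I would enumerate at each relevant weight all formal bilinear monomials of the prescribed type (products of a 2-index $\wp$ with a 3-index $\wp$, together with linear 3-index $\wp$ terms and curve-constant multiples), expand each under the $\sigma$-series using Lemma \ref{lem:sigexp} following the algorithm of \cite{MEe09}, and compute the rank of the resulting coefficient matrix. The nullity at each weight should equal the number of listed relations at that weight, confirming the list is exhaustive.

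The main obstacle is bookkeeping rather than conceptual. Weights range from $-6$ down to $-22$, and at each weight one must enumerate a growing number of bilinear monomials, respect weight-homogeneity throughout, carry the $\sigma$-expansion to an order sufficient for a reliable rank computation, and in parallel manage the $Q_{1333}$ eliminations described above. Once these elements are in place, every step is mechanical; the reason the list is not trivially shorter is precisely that the $Q_{1333}$-dependent 4-index relations couple with their counterparts in ways that take several distinct cross-differentiations to untangle.
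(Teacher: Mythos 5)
Your proposal is correct and follows essentially the same route as the paper: the identities are obtained by cross-differentiation of the 4-index relations (your weight $-6$ computation reproduces the paper's template exactly), with the extra care over the $Q_{1333}$ terms that the paper itself flags, and a $\sigma$-expansion check of the span, consistent with the paper's methodology, to justify completeness. The one detail to make explicit is that when eliminating $\partial_i Q_{1333}$ you must re-express the 5-index function through a $Q$-free route (e.g.\ $\wp_{11333}=\partial_3\wp_{1133}$, $\wp_{12333}=\partial_3\wp_{1233}$, $\wp_{13333}=\partial_1\wp_{3333}$) rather than by differentiating the $\wp_{1333}$ relation itself, which would be circular.
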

\begin{proof}
Similarly to Theorem \ref{thm:BL_27set}, these can be derived through cross-multiplication of the 4-index relations.  However, the existence of $Q_{1333}$ in the 4-index relations means that more care has to be taken in the choice of cross products.  In higher genus trigonal cases, (or in the case where $n>3$) the inclusion of further Q-functions in the basis makes this method increasingly tricky.  An alternative method to systematically find bilinear relations has been developed and is discussed in \cite{ME11}.  \\
\end{proof}

\section{Addition Formulae} \label{SEC_AF}

Here we discuss the addition formulae satisfied by the Abelian functions and present some new formulae associated with automorphisms of the curves.  We start by considering the formulae which generalise equation (\ref{eq:Intro_elliptic_add}) from the elliptic case.  Such a formula will exist for every $(n,s)$-curve as demonstrated by the following theorem.
\begin{theorem} \label{thm:2tadd}
Given an $(n,s)$-curve, the associated functions satisfy a \textit{two-term two-variable addition formula} of the form
\begin{equation} \label{eq:2tadd}
\frac{\sigma(\bu + \bv)\sigma(\bu - \bv)}{\sigma(\bu)^2\sigma(\bv)^2} = \sum_i c_i A_i(\bu)B_i(\bv)
\end{equation}
where the functions $A_i(\bu), B_i(\bv)$ belong to the basis for
$\Gamma \big( J, \mathcal{O}(2 \Theta^{[g-1]} ) \big)$ and the $c_i$ are constants.  Further, the polynomial on the right hand side will either be symmetric or anti-symmetric with respect to the change of variables $(\bu,\bv) \mapsto (\bv,\bu)$ when the $\sigma$-function is odd or even respectively.
\end{theorem}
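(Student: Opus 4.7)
The plan is to show that the left hand side, viewed as a function of $\bu$ with $\bv$ held fixed, lies in the vector space $\Gamma\big(J, \mathcal{O}(2\Theta^{[g-1]})\big)$, then apply the same argument in $\bv$ to deduce the bilinear structure, and finally read off the parity from the $\sigma$-function parity.

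First I would verify periodicity in $\bu$. Substituting $\bu \mapsto \bu + \bm{\ell}$ with $\bm{\ell}\in\Lambda$ and applying the quasi-periodicity relation \eqref{eq:HG_quas} to each of the three $\sigma$-factors involving $\bu$, the character contribution is $\chi(\bm{\ell})\cdot\chi(\bm{\ell})/\chi(\bm{\ell})^2 = 1$. For the exponential factor, bilinearity of $L(\cdot,\cdot)$ in its first slot gives
\begin{equation*}
L\!\left(\bu+\bv+\tfrac{\bm{\ell}}{2},\bm{\ell}\right) + L\!\left(\bu-\bv+\tfrac{\bm{\ell}}{2},\bm{\ell}\right) = 2\,L\!\left(\bu+\tfrac{\bm{\ell}}{2},\bm{\ell}\right),
\end{equation*}
which cancels the corresponding factor in the denominator. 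Since $\sigma(\bu\pm\bv)$ is entire in $\bu$, the only poles arise from $\sigma(\bu)^2$ and hence lie along $\kappa^{-1}(\Theta^{[g-1]})$ with order at most two. Therefore, for each fixed $\bv$, the left hand side is a member of $\Gamma\big(J, \mathcal{O}(2\Theta^{[g-1]})\big)$.

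Next I would expand using a basis $\{A_i\}$ of this vector space, obtaining an expression $\sum_i a_i(\bv)\,A_i(\bu)$ with coefficients $a_i$ depending on $\bv$. To see that these $a_i$ are themselves elements of $\Gamma\big(J, \mathcal{O}(2\Theta^{[g-1]})\big)$, I would choose generic points $\bu^{(1)},\ldots,\bu^{(N)}$ (where $N=\dim \Gamma$) making the matrix $\big(A_i(\bu^{(k)})\big)$ invertible; inverting this linear system expresses each $a_i(\bv)$ as a finite linear combination of evaluations of the left hand side, which by the identical argument applied to the $\bv$ variable is Abelian with poles of order at most two on the theta divisor. Expanding each $a_i$ in a basis $\{B_j\}$ of the same space yields the claimed bilinear combination $\sum_{i,j} c_{ij}A_i(\bu)B_j(\bv)$, with the $c_{ij}$ constants.

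Finally, for the symmetry statement, swapping $(\bu,\bv)\mapsto(\bv,\bu)$ leaves $\sigma(\bu+\bv)\sigma(\bu)^2\sigma(\bv)^2$ unchanged and sends $\sigma(\bu-\bv)$ to $\sigma(\bv-\bu) = \sigma(-(\bu-\bv))$. Applying the parity identity $\sigma(-\bw)=\pm\sigma(\bw)$ gives the overall symmetric or anti-symmetric behaviour of the polynomial on the right according to the parity of $\sigma$. I expect no real obstacle: the entire argument is a direct generalisation of the reasoning that recovers \eqref{eq:Intro_elliptic_add} in the elliptic case, and the only mildly technical point is the bookkeeping needed to confirm that the coefficient functions $a_i(\bv)$ also lie in the two-pole vector space rather than in some a priori larger space.
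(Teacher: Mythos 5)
Your argument is correct and is essentially the paper's own proof: both rest on the quasi-periodicity (\ref{eq:HG_quas}) to show the quotient is Abelian in each variable with poles of order at most two along $\kappa^{-1}(\Theta^{[g-1]})$, and then conclude by expanding in the basis of $\Gamma\big(J,\mathcal{O}(2\Theta^{[g-1]})\big)$ and invoking the parity (\ref{eq:sigparity}) of $\sigma$ for the (anti-)symmetry. The only divergences are minor: the paper deduces periodicity in $\bv$ from the swap identity $\mathrm{LHS}(\bu,\bv)=(-1)^k\,\mathrm{LHS}(\bv,\bu)$ combined with the already-proved periodicity in $\bu$, whereas you verify it directly (which just needs $\chi(\bm{\ell})\chi(-\bm{\ell})=\chi(\bm{\ell})^2=1$ and bilinearity of $L$), and your interpolation argument for the coefficients $a_i(\bv)$ spells out the separation-of-variables step that the paper states without proof.
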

\begin{proof}
Denote the left hand side of equation (\ref{eq:2tadd}) by LHS$(\bu,\bv)$.  Clearly this has poles of order at most two and so we just need to prove that it is Abelian.  We let $\ell$ be a point in the lattice and use the quasi-periodicity condition (\ref{eq:HG_quas}) with
$\Psi = \big( \eta'\bm{\ell'} + \eta''\bm{\ell''} \big)^T$ to check that
\begin{align*}
\text{LHS}(\bu+\ell,\bv) &= \frac{\sigma(\bu+\ell+\bv)\sigma(\bu+\ell-\bv)}{\sigma(\bu+\ell)^2\sigma(\bv)^2}
= \frac{ \chi(\ell)e^{\Psi [\bu+\bv+\frac{\ell}{2}]}\sigma(\bu+\bv)
         \chi(\ell)e^{\Psi [\bu-\bv+\frac{\ell}{2}]}\sigma(\bu-\bv) }
{\chi(\ell)^2e^{2\Psi [\bu+\frac{\ell}{2}]}\sigma(\bu)^2\sigma(\bv)^2} \\
&= \frac{\sigma(\bu+\bv)\sigma(\bu-\bv)}{\sigma(\bu)^2\sigma(\bv)^2}
e^{\Psi[\bu+\bv+\frac{\ell}{2}+\bu-\bv+\frac{\ell}{2}-2u-\ell]} = \text{LHS}(\bu,\bv).
\end{align*}
Then using the parity property (\ref{eq:sigparity}) with $k=1/24(n^2-1)(s^2-1)$ we can check that
\begin{align*}
\text{LHS}(\bu,\bv+\ell) &= \frac{\sigma(\bu+\bv+\ell)\sigma(\bu-\bv-\ell)}{\sigma(\bu)^2\sigma(\bv+\ell)^2}
= \frac{\sigma(\bv+\ell+\bu)(-1)^k\sigma(\bv+\ell-\bu)}{\sigma(\bv+\ell)^2\sigma(\bu)^2} \\
&= (-1)^k\text{LHS}(\bv+\ell,\bu) = (-1)^k\text{LHS}(\bv,\bu)
= (-1)^k\frac{\sigma(\bv+\bu)\sigma(\bv-\bu)}{\sigma(\bv)^2\sigma(\bu)^2} \\
&= (-1)^{2k}\frac{\sigma(\bu+\bv)\sigma(\bu-\bv)}{\sigma(\bu)^2\sigma(\bv)^2} = \text{LHS}(\bu,\bv).
\end{align*}
Hence the left hand side is Abelian with respect to both $\bu$ and $\bv$ and so the right hand side must be expressed using the basis elements for $\Gamma \big( J, \mathcal{O}(2 \Theta^{[g-1]} ) \big)$ as described.  The further symmetry property of the right hand side can be concluded by simply applying the symmetry property of $\sigma(\bu)$ to LHS$(\bv,\bu)$.  \\
\end{proof}

The coefficients in the right hand side of the formulae can be explicitly determined using the $\sigma$-expansion.  (See \cite{MEe09} for details of such calculations).  In \cite{eemop07} the authors showed that the functions associated with the cyclic $(3,4)$-curve satisfy
\begin{align}
\frac{\sigma(\bu + \bv)\sigma(\bu - \bv)}{\sigma(\bu)^2\sigma(\bv)^2}
&= - \wp_{11}(\bu) \textstyle
- \frac{1}{3}Q_{1333}(\bv)\wp_{33}(\bu) + \wp_{12}(\bv)\wp_{23}(\bu) + \wp_{13}(\bv)\wp_{22}(\bu) \nonumber  \\
&\qquad + \wp_{11}(\bv) \textstyle
+ \frac{1}{3}Q_{1333}(\bu)\wp_{33}(\bv) - \wp_{12}(\bu)\wp_{23}(\bv) - \wp_{13}(\bu)\wp_{22}(\bv). \label{eq:34_2t2v}
\end{align}
Note that the (3,4) $\sigma$-function is odd and hence the addition formula here is anti-symmetric in $(\bu,\bv)$.  The corresponding formula for the functions associated to the cyclic $(2,7)$-curve is
\begin{align}
\frac{\sigma(\bu + \bv)\sigma(\bu - \bv)}{\sigma(\bu)^2\sigma(\bv)^2}
&= \Delta(\bu) - \wp_{11}(\bv)\wp_{33}(\bu) - \wp_{22}(\bu)\wp_{13}(\bv) + \wp_{12}(\bv)\wp_{23}(\bu)
+ 2\wp_{13}(\bu)\wp_{13}(\bv) \nonumber \\
&\quad + \Delta(\bv) - \wp_{11}(\bu)\wp_{33}(\bv) - \wp_{22}(\bv)\wp_{13}(\bu) + \wp_{12}(\bu)\wp_{23}(\bv), \label{eq:27_2t2v}
\end{align}
as first established in \cite{bel97}.  This time the $\sigma$-function is even and hence the formula is symmetric in $(\bu,\bv)$.

In some cases there are more addition formulae associated with the functions, resulting from automorphisms of the curve equation.  Such addition formulae were the topic of \cite{emo11} which gave a thorough treatment of the genus one and two cases.
We will present two new genus three addition formulae associated with the cyclic $(3,4)$-curve.  The first of these is related to the automorphism on the curve (\ref{eq:c34}) given by the operator
\[
[\zeta]: (x,y) \mapsto (x, \zeta y), \quad \text{where } \zeta=\exp\left( \frac{2\pi i}{3} \right).
\]
So $\zeta$ is a cube root of unity and $[\zeta]$ an operator which multiplies $y$ by the root leaving the curve unchanged.  We extend this notation to define the sequence of operators and automorphisms,
\[
[\zeta^j]: (x,y) \mapsto (x, \zeta^j y), \quad \text{for } j \in \Z.
\]
We can check using the basis of differentials (\ref{eq:du34}) that these operators act on the variables $\bu$ as follows.
\begin{equation} \label{eq:zetau}
[\zeta^j]\bu = ( \zeta^j u_1, \zeta^j u_2, \zeta^{2j} u_3 ).
\end{equation}
The action of such operators on the lattice $\Lambda$ is stable, (it moves the points around but does not change the overall lattice).  This can be checked by considering the effect on the individual elements of the period matrices, (see \cite{on98} for more details).  We can now derive the following result for the $\sigma$-function which follows Lemma 4.2.5 in \cite{on98}.

\begin{lemma} \label{lem:sigzeta}
The $\sigma$-function associated to the cyclic $(3,4)$-curve satisfies
\begin{equation} \label{sigma_zeta}
\sigma( [\zeta^j]\bu ) = \zeta^j\sigma(\bu).
\end{equation}
\end{lemma}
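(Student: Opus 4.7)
The plan is to verify the transformation law directly using the series expansion of $\sigma$ given by Lemma \ref{lem:sigexp}. By linearity it suffices to show that each homogeneous piece---the Schur--Weierstrass polynomial $SW_{3,4}(\bu)$ and each correction term $C_{5+3k}(\bu,\blambda)$---is multiplied by $\zeta^j$ under the substitution $\bu \mapsto [\zeta^j]\bu = (\zeta^j u_1, \zeta^j u_2, \zeta^{2j} u_3)$ from equation (\ref{eq:zetau}).

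For $SW_{3,4} = \frac{1}{20} u_3^5 - u_3 u_2^2 + u_1$, the three monomials pick up factors of $\zeta^{10j}$, $\zeta^{4j}$, and $\zeta^{j}$ respectively; using $\zeta^3 = 1$, each reduces to $\zeta^j$. Hence $SW_{3,4}([\zeta^j]\bu) = \zeta^j SW_{3,4}(\bu)$.

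For a generic term of $C_{5+3k}$, namely $u_1^{a_1} u_2^{a_2} u_3^{a_3}$ multiplied by a monomial in the $\lambda_\ell$'s, Lemma \ref{lem:sigexp} forces $5a_1 + 2a_2 + a_3 = 5 + 3k$. The substitution $[\zeta^j]$ multiplies such a $\bu$-monomial by $\zeta^{j(a_1 + a_2 + 2a_3)}$, while the $\lambda$-part is invariant since the operator $[\zeta]$ preserves the curve equation (\ref{eq:c34}). The key computation is mod $3$: doubling the exponent appearing in the $\zeta$-factor gives
\[
2(a_1 + a_2 + 2a_3) = 2a_1 + 2a_2 + 4a_3 \equiv 5a_1 + 2a_2 + a_3 = 5 + 3k \equiv 2 \pmod 3,
\]
so $a_1 + a_2 + 2a_3 \equiv 1 \pmod 3$ and every such monomial transforms by the factor $\zeta^j$. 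Summing over $k$ then yields $\sigma([\zeta^j]\bu) = \zeta^j \sigma(\bu)$, as required.

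The only real obstacle is the brief congruence check above; once this mod $3$ bookkeeping is done the rest is routine. A more conceptual alternative would be to work directly from Definition \ref{def:HG_sigma}, tracking the effect of $[\zeta^j]$ on the period matrices $\omega', \omega'', \eta', \eta''$ (using that $[\zeta^j]$ preserves the lattice $\Lambda$ but permutes the basis cycles), along the lines of Lemma 4.2.5 of \cite{on98}. With Lemma \ref{lem:sigexp} available, however, the series-expansion route is considerably more direct.
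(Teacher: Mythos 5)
Your proof is correct, but it takes a genuinely different route from the paper's. You verify the transformation law termwise on the Taylor expansion, using the weight structure guaranteed by Lemma \ref{lem:sigexp}: since $\mathrm{wt}(\bu)=(5,2,1)$ in the $(3,4)$-case and the $\lambda_j$ carry weights divisible by $3$, every $\bu$-monomial in the expansion has weight $\equiv 2 \pmod 3$, and your mod-$3$ computation (equivalently: $[\zeta^j]$ scales $u_i$ by $\zeta^{-j\,\mathrm{wt}(u_i)}$, hence a weight-$w$ monomial by $\zeta^{-jw}=\zeta^{j}$ when $w\equiv 2 \pmod 3$) shows each term acquires exactly the factor $\zeta^j$; the passage from the termwise identity to the identity of entire functions is routine. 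The paper instead argues function-theoretically: using stability of $\Lambda$ under $[\zeta^j]$, the quasi-periodicity (\ref{eq:HG_quas}) and the identity $L([\zeta]\bu,\bv)=L(\bu,[\zeta^{-1}]\bv)$ from \cite{on98}, it shows the quotient $\sigma([\zeta^j]\bu)/\sigma(\bu)$ is entire and bounded, hence constant by Liouville, and only then reads the constant $\zeta^j$ off the leading Schur--Weierstrass term --- so the ``more conceptual alternative'' you sketch in your closing remark is in fact the paper's actual proof. The trade-off: your route is shorter and purely combinatorial once Lemma \ref{lem:sigexp} is granted, but it consumes the full weight-homogeneity of the whole expansion, a stronger input than the paper uses; the paper's argument needs only the leading term (to pin down the constant) plus the standard quasi-periodicity machinery, and therefore adapts more readily to settings where an explicit expansion is not available.
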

\begin{proof}
Consider the quasi-periodicity of $\sigma( [\zeta^j]\bu )$.  If $\ell$ is a point on the lattice then
\begin{align*}
\sigma( [\zeta^j](\bu+\ell) ) = \sigma( [\zeta^j]\bu + [\zeta^j]\ell ).
\end{align*}
Since the lattice is stable under the action we know that $[\zeta^j]\ell$ is also on the lattice.  Hence by equation (\ref{eq:HG_quas})
\begin{align*}
\sigma( [\zeta^j](\bu+\ell) )
= \chi([\zeta^j]\ell) \sigma([\zeta^j]\bu) \exp \Big[ L \Big( [\zeta^j] \Big(\bu+\frac{\ell}{2}\Big), [\zeta]\ell \Big) \Big]
\end{align*}
In \cite{on98} the author shows that for an automorphism of a cyclic curve we have
\[
L([\zeta]\bu,\bv) = L(\bu,[\zeta^{-1}]\bv)
\]
and hence
\[
L([\zeta^j]\bu,[\zeta^j]\bv) = L(\bu,[\zeta^{-j}][\zeta^j]\bv) = L(\bu,\bv).
\]
Therefore, we have
\[
\sigma( [\zeta^j](\bu+\ell) )
= \chi([\zeta^j]\ell) \sigma([\zeta^j]\bu) \exp \Big[ L \Big( \bu+\frac{\ell}{2}, \ell \Big) \Big]
\]
We now consider the quotient
\[
\frac{\sigma( [\zeta^j](\bu+\ell) )}{\sigma( (\bu+\ell) )}
= \frac{\chi([\zeta^j]\ell)}{\chi(\ell)} \frac{\sigma([\zeta^j]\bu)}{\sigma(\bu)}
= \pm \frac{\sigma([\zeta^j]\bu)}{\sigma(\bu)}
\]
since $\chi(\ell) = \pm 1$. So we see that the function
\[
\frac{\sigma([\zeta^j]\bu)}{\sigma(\bu)}
\]
is bounded and entire (since the zero sets coincide).  Hence, by Liouville's theorem, the function is a constant.  Using the Schur-Weierstrass polynomial (\ref{eq:34SW}) we see that this constant is $\zeta^j$.  \\
\end{proof}

We can now derive the addition formula associated with these automorphisms.  Note that this is a more general version of the formula presented in Theorem 10.1 of \cite{eemop07}.
\begin{theorem} \label{thm:34_3t3v}
The functions associated to the cyclic $(3,4)$-curve satisfy
\begin{align*}
&\frac{\sigma(\bu+\bv+\bw)\sigma(\bu+[\zeta]\bv+[\zeta^2]\bw)\sigma(\bu+[\zeta^2]\bv+[\zeta]\bw) }
{ \sigma(\bu)^3 \sigma(\bv)^3\sigma(\bw)^3 } \\
&\qquad = f(\bu,\bv,\bw) + f(\bu,\bw,\bv) + f(\bv,\bu,\bw) + f(\bv,\bw,\bu) + f(\bw,\bu,\bv) + f(\bw,\bv,\bu)
\end{align*}
where
\begin{align*}
f(\bu,\bv,\bw)
&= \Big[ P_{30} + P_{27} + P_{24} + P_{21} + P_{18} + P_{15} + P_{12} + P_{9} + P_{6} + P_{3} + P_{0} \Big](\bu,\bv,\bw)
\end{align*}
and the polynomials $P_k(\bu,\bv,\bw)$ are as presented in Appendix \ref{APP_34add}.
\end{theorem}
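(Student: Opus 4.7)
The plan is to follow the template of Theorem~\ref{thm:2tadd} but with three variables and three $\sigma$-factors in the numerator, then fix the explicit polynomial $f$ by matching the $\sigma$-series. First I would check that the left-hand side, viewed as a function of $\bu$ for fixed $\bv,\bw$, is $\Lambda$-periodic. Applying (\ref{eq:HG_quas}) to each of the three numerator $\sigma$-factors and to $\sigma(\bu)^3$ in the denominator, the $\chi(\ell)^3$ factors match, and bilinearity of $L$ combines the three exponential contributions into
\[
L\big(3\bu + (\bv+[\zeta]\bv+[\zeta^2]\bv) + (\bw+[\zeta]\bw+[\zeta^2]\bw) + \tfrac{3}{2}\ell,\,\ell\big).
\]
The identity $\bv+[\zeta]\bv+[\zeta^2]\bv=\bm{0}$, immediate from (\ref{eq:zetau}) and $1+\zeta+\zeta^2=0$, kills the cross terms and leaves exactly $3L(\bu+\ell/2,\ell)$, which cancels against the denominator.

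Next I would use Lemma~\ref{lem:sigzeta} to establish the symmetries. Diagonal invariance under $(\bu,\bv,\bw)\mapsto([\zeta^j]\bu,[\zeta^j]\bv,[\zeta^j]\bw)$ is immediate: each numerator factor acquires $\zeta^j$ and each denominator cube acquires $\zeta^{3j}=1$. The map $(\bv,\bw)\mapsto([\zeta]\bv,[\zeta^2]\bw)$ cyclically permutes the three numerator factors, with a total scalar of $\zeta^3=1$, while the denominator is invariant since $\sigma([\zeta]\bv)^3=\sigma(\bv)^3$ and similarly for $\bw$. A transposition such as $\bu\leftrightarrow\bv$ then follows by applying $[\zeta]$ or $[\zeta^2]$ inside each affected $\sigma$-factor to recognise it as one of the original three factors up to a cube-root-of-unity prefactor, the product of these prefactors again being $\zeta^3=1$; combined with the $\bu$-periodicity established above this also yields periodicity in $\bv$ and $\bw$ and full $S_3$-symmetry of the left-hand side. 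Since the numerator is entire, the left-hand side has poles of order at most $3$ in each of $\bu,\bv,\bw$ along the respective theta divisors, so it lies in $\Gamma(J,\mathcal{O}(3\Theta^{[2]}))$ separately in each variable and therefore admits a finite expansion
\[
\text{LHS} \;=\; \sum_{\alpha,\beta,\gamma} c_{\alpha\beta\gamma}\,B_\alpha(\bu)\,B_\beta(\bv)\,B_\gamma(\bw)
\]
in the basis (\ref{eq:34_3pole}); the Sato weight grading together with the symmetries above collapses this triple sum to a manageable set of unknowns.

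To pin down the surviving $c_{\alpha\beta\gamma}$ I would substitute the expansion (\ref{eq:HG_SW}) for each $\sigma$ on both sides and match coefficients of monomials in $\bu,\bv,\bw$, exactly as in the algorithm of \cite{MEe09} used to produce the two-variable analogue (\ref{eq:34_2t2v}). Regrouping the resulting symmetric polynomial as $\sum_{\pi\in S_3} f(\pi(\bu,\bv,\bw))$ with $f = \sum_k P_k$ decomposed by Sato weight then yields the presentation of Appendix~\ref{APP_34add}. The main obstacle is computational rather than conceptual: the three-variable $\sigma$-series must be carried to sufficient order to resolve every coefficient simultaneously, and even after the symmetry reductions the resulting linear system is sizeable. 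Once Lemma~\ref{lem:sigzeta} is available the shape of the identity is essentially forced by the Abelianity and pole-order arguments above; the remaining work is bookkeeping in Maple against the tabulated $\sigma$-expansion.
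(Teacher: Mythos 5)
Your proposal follows essentially the same route as the paper's proof: quasi-periodicity plus the identity $1+\zeta+\zeta^2=0$ to show the left-hand side is Abelian, Lemma \ref{lem:sigzeta} for the $S_3$-symmetry, expansion in the basis (\ref{eq:34_3pole}) separately in each variable, and determination of the coefficients from the $\sigma$-expansion using weight, parity and symmetry reductions in Maple. The only differences are cosmetic (you establish periodicity before symmetry, the paper the reverse), so the argument is correct and matches the paper.
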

\begin{proof}
Denote the left hand side of the formula by LHS$(\bu,\bv,\bw)$.  Using Lemma \ref{lem:sigzeta} and the parity property of the $\sigma$-function we first check that LHS$(\bu,\bv,\bw)$ is symmetric under all permutations of $(\bu,\bv,\bw)$.  Next consider the affect of $\bu \mapsto \bu + \ell$.
\begin{align*}
&\text{LHS}(\bu+\ell,\bv,\bw)
= \frac{\sigma(\bu+\ell+\bv+\bw)\sigma(\bu+\ell+[\zeta]\bv+[\zeta^2]\bw)\sigma(\bu+\ell+[\zeta^2]\bv+[\zeta]\bw)}
{\sigma(\bu+\ell)^3\sigma(\bv)^3\sigma(\bw)^3} \\
&\qquad = \text{LHS}(\bu,\bv,\bw)\frac{\chi(\ell)e^{\Psi[\bu+\bv+\bw+\ell/2]}
\chi(\ell)e^{\Psi[\bu+[\zeta]\bv+[\zeta^2]\bw+\ell/2]}
\chi(\ell)e^{\Psi[\bu+[\zeta^2]\bv+[\zeta]\bw+\ell/2]}}
{\chi(\ell)^3e^{3\Psi[\bu+\ell/2]}} \\
&\qquad = \text{LHS}(\bu,\bv,\bw) e^{\Psi\bv[1 + [\zeta] + [\zeta^2]]}e^{\Psi\bw[1 + [\zeta] + [\zeta^2]]}
\end{align*}
However, from (\ref{eq:zetau}) we see that
\begin{align*}
\bv(1 + [\zeta] + [\zeta^2]) =
\left( \begin{array}{l}
v_1 (1 + \zeta + \zeta^2 ) \\
v_2 (1 + \zeta + \zeta^2 ) \\
v_3 (1 + \zeta^2 + \zeta )
\end{array} \right) =
\left( \begin{array}{l}
0 \\ 0 \\ 0
\end{array} \right),
\end{align*}
and so LHS$(\bu,\bv,\bw)$ is Abelian with respect to $\bu$.  Further, since it is symmetric in $(\bu,\bv,\bw)$ we can conclude that it is Abelian in $\bv$ and $\bw$ as well.  Hence it may be expressed as
\[
\text{LHS}(\bu,\bv,\bw) = \sum_i c_i A_i(\bu)B_i(\bv)C_i(\bw)
\]
such that the $c_i$ are constants and the functions $A_i(\bu),B_i(\bv),C_i(\bw)$ belong to the basis for
$\Gamma \big( J, \mathcal{O}(3 \Theta^{[2]} ) \big)$, presented earlier in equation (\ref{eq:34_3pole}).  To determine the constants $c_i$ we use the $\sigma$-expansion.  The computations involved can be heavy and so it is essential that we take into account all the available simplifications.  We have already noted that $\text{LHS}(\bu,\bv,\bw)$ is symmetric under all permutations of $(\bu,\bv,\bw)$ and we reduce the number of independent $c_i$ by applying this property to the sum.  We can also check using the parity property of $\sigma(\bu)$ that $\text{LHS}(\bu,\bv,\bw)$ is even under $(\bu,\bv,\bw) \mapsto [-1](\bu,\bv,\bw)$.  Since we know the parity of the $\wp$-functions matches that of the number of their indices, we can check the parity of all basis functions and hence only include suitable combinations.  The biggest computational simplification come from the knowledge that $\text{LHS}(\bu,\bv,\bw)$ has total weight $-30$ which will drastically reduces the number of possible terms.  To further ease the time and memory constraints we implement code in Maple to efficiently expand the products of series so that only the relevant terms are considered.

We find that $\text{LHS}(\bu,\bv,\bw)$ is given as stated in the theorem.  For simplicity we group together the terms with common weight ratios into the polynomials $P_k(\bu,\bv,\bw)$ which contain the terms with weight $-k$ in the Abelian functions and weight $(-30-k)$ in the curve parameters.  These polynomials are presented in Appendix \ref{APP_34add} and are made available in the supplementary material. \\
\end{proof}

If we were to try and derive the corresponding addition formula for the $(2,7)$-case then we would be led to consider a curve automorphism $[\zeta]$ where the constant $\zeta=-1$ instead.  The automorphism addition formula would then coincide with the standard addition formula (\ref{eq:27_2t2v}).  Hence there is no corresponding addition formula to Theorem \ref{thm:34_3t3v} in the $(2,7)$-case, or rather it is the same as the tradition addition formula in equation (\ref{eq:27_2t2v}).

The final addition formula presented in this paper is satisfied by the functions associated to the reduction of the $(3,4)$-curve given by
$y^3 = x^4 + \lambda_0$.
This has a family of automorphisms,
\[
[i^j]: (x,y) \mapsto ((-i)^jx,y)
\]
where $i$ is the complex variable and $j \in \Z$.  The functions then satisfy the following formula.

\begin{theorem} \label{thm:34_4t2v}
The functions associated to the restricted $(3,4)$-curve, $y^3 = x^4 + \lambda_{0}$ satisfy
\[
\frac{\sigma(\bu+\bv)\sigma(\bu+[i]\bv)\sigma(\bu+[i^2]\bv)\sigma(\bu+[i^3]\bv) }{ \sigma(\bu)^4 \sigma(\bv)^4 }
= f(\bu,\bv) - f(\bv,\bu)
\]
where
\begin{align*}
f(\bu,\bv) &= \textstyle  - \frac{7}{9}\wp_{112}(\bv)\wp_{123}(\bu) - \frac{1}{6}\wp_{1111}(\bu)
+ \frac{1}{18}\wp_{112}(\bu)\partial_2\wp^{[11]}(\bv) - \frac{1}{6}F(\bu) \wp_{22}(\bv) \\
&\quad \textstyle + \frac{1}{27}\wp_{1113}(\bu)\wp_{3333}(\bv)
- \frac{1}{18}\partial_2 \wp^{[11]}(\bu)\partial_2 \wp^{[13]}(\bv)
- \frac{1}{216}\partial_2\partial_2 Q_{1333}(\bu)\wp_{2222}(\bv) \\
&\quad \textstyle + \frac{1}{6}\partial_2 \wp^{[33]}(\bu)\wp_{233}(\bv)
- \frac{2}{9}\wp_{1113}(\bu)\wp_{22}(\bv) + \frac{4}{9}\wp^{[22]}(\bu)Q_{1333}(\bv)
- \frac{2}{9}\wp_{1133}(\bu)Q_{1333}(\bv) \\
&\quad \textstyle + \frac{1}{54}\partial_2\partial_2Q_{1333}(\bu)\wp_{1333}(\bv)
- \frac{1}{18}\wp_{3333}(\bv)F(\bu) + \frac{4}{9}\wp^{[22]}(\bv)\wp_{1333}(\bu) \\
&\quad \textstyle - \frac{2}{9}\partial_2 \wp_{[13]}(\bu)\wp_{123}(\bv) - \frac{1}{36}\wp_{1133}(\bu)\wp_{2222}(\bv)
 +  \big[ \frac{1}{6}\wp_{2222}(\bv) + \frac{1}{18}\wp_{3333}(\bv)\wp_{22}(\bu) \big] \lambda_0.
\end{align*}
\end{theorem}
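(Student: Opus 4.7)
I would follow the template of Theorem \ref{thm:34_3t3v}, with the triple product based on $[\zeta]$ replaced by a quadruple product based on the curve automorphism $[i]$. This automorphism is only available on the restricted curve $y^3 = x^4 + \lambda_0$, since for nonzero $\lambda_1, \lambda_2, \lambda_3$ the substitution $(x, y) \mapsto (-ix, y)$ alters the curve equation. Using the differential basis (\ref{eq:du34}), the substitution sends $dx \mapsto -i\,dx$ while fixing $f_y = 3y^2$, and integration against $1$, $x$, $y$ gives the action on variables
\[
[i^j]\bu = \big( (-i)^j u_1,\ (-1)^j u_2,\ (-i)^j u_3 \big).
\]
The crucial arithmetic identity is $\sum_{j=0}^3 [i^j]\bv = 0$, which follows componentwise from $1 + (-i) + (-1) + i = 0$ and $1 - 1 + 1 - 1 = 0$.

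The next step is the analogue of Lemma \ref{lem:sigzeta} for $[i]$: on the restricted curve, $\sigma([i^j]\bu) = (-i)^j \sigma(\bu)$. The proof parallels Lemma \ref{lem:sigzeta}: stability of the period lattice $\Lambda$ under $[i]$ together with the identity $L([i^j]\bu, [i^j]\bv) = L(\bu, \bv)$ from \cite{on98} shows that $\sigma([i^j]\bu)/\sigma(\bu)$ is entire and bounded, hence constant by Liouville. The constant is read off from the leading Schur--Weierstrass term of Lemma \ref{lem:sigexp}: each monomial of $SW_{3,4} = \tfrac{1}{20}u_3^5 - u_3 u_2^2 + u_1$ scales by $-i$ under $[i]$, so the constant is $(-i)^j$.

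Let $\text{LHS}(\bu, \bv)$ denote the left-hand side of the claimed formula. Applying (\ref{eq:HG_quas}) to the four numerator factors, together with $\sum_j [i^j]\bv = 0$, shows that the exponential and $\chi(\ell)$ contributions match those of the denominator $\sigma(\bu+\ell)^4$, so $\text{LHS}(\bu+\ell, \bv) = \text{LHS}(\bu, \bv)$ for every $\ell \in \Lambda$. For antisymmetry, use the lemma above to write
\[
\sigma(\bv + [i^j]\bu) = \sigma\big( [i^j]([i^{-j}]\bv + \bu) \big) = (-i)^j \sigma(\bu + [i^{-j}]\bv);
\]
the product over $j = 0, 1, 2, 3$ then produces the sign $(-i)^{0+1+2+3} = (-i)^6 = -1$ and merely reindexes the remaining factors, giving $\text{LHS}(\bv, \bu) = -\text{LHS}(\bu, \bv)$. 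This antisymmetry together with Abelian-ness in $\bu$ forces Abelian-ness in $\bv$.

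Since $\text{LHS}$ has poles of order at most four in each of $\bu$ and $\bv$ and is Abelian in each, Theorem \ref{thm:34_4pole} gives an expansion $\text{LHS}(\bu, \bv) = \sum_i c_i A_i(\bu) B_i(\bv)$ with $A_i, B_i$ in the 4-pole basis, and the antisymmetry reduces this to the stated shape $f(\bu, \bv) - f(\bv, \bu)$. The coefficients are fixed by matching against the $\sigma$-expansion of Lemma \ref{lem:sigexp}. The principal obstacle is computational rather than conceptual: the 4-pole basis has dimension $64$ and $\text{LHS}$ has Sato weight $-20$, so substantial series precision is required. Feasibility rests on three simplifications exploited already in the proof of Theorem \ref{thm:34_3t3v}: Sato homogeneity restricts to basis pairs whose weights sum to $-20$; the invariance $\text{LHS}([-1]\bu, [-1]\bv) = \text{LHS}(\bu, \bv)$, which is a consequence of $\sigma([-1]\bw) = -\sigma(\bw)$ in the $(3,4)$-case, admits only parity-matched pairs; and the $(\bu, \bv)$-antisymmetry is imposed from the outset. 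The resulting reduced linear system is then solved with the Maple $\sigma$-expansion machinery of \cite{MEe09} to recover the explicit $f$ displayed.
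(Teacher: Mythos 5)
Your proposal is correct and follows essentially the same route as the paper: establish $\sigma([i]\bu)=-i\sigma(\bu)$ by the argument of Lemma \ref{lem:sigzeta}, verify that the left hand side is Abelian and anti-symmetric under $(\bu,\bv)\mapsto(\bv,\bu)$, expand it bilinearly in the 4-pole basis of Theorem \ref{thm:34_4pole}, and fix the coefficients from the $\sigma$-expansion using weight and parity constraints. The extra details you supply (the explicit action $[i^j]\bu$, the vanishing of $\sum_j[i^j]\bv$, and the reindexing argument for anti-symmetry) are exactly the checks the paper leaves implicit.
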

\begin{proof}
Using a similar approach to Lemma \ref{lem:sigzeta} we see that $\sigma([i]\bu)=-i\sigma(\bu)$.  We can then check that the left hand side is anti-symmetric under $(\bu,\bv) \mapsto (\bv,\bu)$ and is Abelian.  We can hence express it as a quadratic polynomial in the elements of the basis for
$\Gamma \big( J, \mathcal{O}(4 \Theta^{[2]} ) \big)$, derived earlier in Theorem (\ref{thm:34_4pole}).

We use the $\sigma$-function to determine the coefficients of each term.  Due to the restriction on the curve parameters, determining the coefficients is computationally easy in comparison to Theorem \ref{thm:34_3t3v}.  (However the construction of the basis in Theorem (\ref{thm:34_4pole}) required much effort.) \\
\end{proof}
There is a similar formula associated to the restricted $(2,7)$-curve $y^2 = x^7 + \lambda_0$ which has automorphisms
$[\iota^j]: (x,y) \mapsto (\iota^jx,y)$.  Here $\iota$ is a seventh root of unity and we consider,
\[
\frac{\prod_{k=0}^6 \sigma(\bu+[\iota^k]\bv)}{\sigma(\bu)^7\sigma(\bv)^7}.
\]
To evaluate this we will require a basis for the 7-pole vector space which has dimension $7^3=343$.

\newpage

\appendix

\section{Bases for the functions associated with the hyperelliptic curve of genus four} \label{APP_29}

In this Appendix we consider the general (2,9)-curve,
\begin{align*}
&y^2+(\mu_1x^4 + \mu_3x^3 + \mu_5x^2+\mu_7x+\mu_9)y \\
&\hspace*{1in} = x^9+\mu_2x^8+\mu_4x^7+\mu_6x^6+\mu_8x^5+\mu_{10}x^4
+\mu_{12}x^3 + \mu_{14}x^2 + \mu_{16}x + \mu_{18},
\end{align*}
which has genus four.  We construct the bases for standard Abelian functions associated with this curve, following the approach in Section \ref{SEC_Bases}.
\begin{theorem} \label{thm:29_2pole}
The basis for $\Gamma \big( J, \mathcal{O}(2 \Theta^{[3]} ) \big)$ is given by
\begin{align} \label{eq:29_2pole}
\begin{array}{ccccccccccccccccccccccccccc}
&\op&  \C1       &\op& \C\wp_{11} &\op& \C\wp_{12} &\op& \C\wp_{13}  &\op& \C\wp_{14} &\op& \C\wp_{22}  \\
&\op& \C\wp_{23} &\op& \C\wp_{24} &\op& \C\wp_{33} &\op& \C\wp_{34}  &\op& \C\wp_{44} &\op& \C \Delta_1 \\
&\op& \C\Delta_2 &\op& \C\Delta_3 &\op& \C\Delta_4 &\op& \C\Delta_5,
\end{array}
\end{align}
where
\begin{align*}
\Delta_1 & = \wp_{34} \wp_{23}-\wp_{34} \wp_{14}+\wp_{24}^2-\wp_{33} \wp_{24}
           +\wp_{44} \wp_{13}-\wp_{22} \wp_{44},\\
\Delta_2 & = \wp_{34} \wp_{13}+\wp_{24} \wp_{14}-\wp_{33} \wp_{14}-\wp_{12} \wp_{44},\\
\Delta_3 & = -\wp_{44} \wp_{11}+\wp_{14}^2-\wp_{23} \wp_{14}+\wp_{13} \wp_{24},\\
   \Delta_4 & = -2 \wp_{34} \wp_{11} +2 \wp_{13} \wp_{14}
            -2 \wp_{22} \wp_{14}+2 \wp_{12} \wp_{24},\\
\Delta_5 & = -\wp_{12} \wp_{23}+\wp_{22} \wp_{13}-\wp_{13}^2+\wp_{12} \wp_{14}
      -\wp_{11} \wp_{24}+\wp_{11} \wp_{33}.
\end{align*}
\end{theorem}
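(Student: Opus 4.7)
The plan is to follow the template used for Theorems \ref{thm:27_3pole} and \ref{thm:34_4pole}: first count the dimension, then exhibit enough independent candidates, then verify their pole structure and linear independence using the $\sigma$-expansion. By the Riemann--Roch theorem for Abelian varieties, the dimension of $\Gamma\big(J,\mathcal{O}(2\Theta^{[3]})\big)$ in genus $g=4$ is $2^g = 16$, so the claimed list has the right cardinality.

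The constant function $1$ and the ten $2$-index $\wp$-functions $\wp_{ij}$ ($1 \leq i \leq j \leq 4$) manifestly belong to the space (each has poles of order exactly two by Definition \ref{def:nip}), giving $11$ of the required $16$ elements. To produce the remaining $5$ functions I would follow Baker's strategy that produced $\Delta$ in the $(2,7)$-case: form minors of the matrix $[\wp_{ij}]_{4\times 4}$ and take suitable linear combinations of such $\wp^{[ij]}$ functions so that the order-three poles arising from the determinantal expressions cancel, leaving only poles of order two. Concretely, one constructs a generic homogeneous polynomial of weight $w$ in products of pairs of $2$-index $\wp$-functions (for each weight $w \in \{-8,-10,-12,-14,-16\}$, which are the values realised by $\Delta_5,\Delta_4,\Delta_3,\Delta_2,\Delta_1$ in the table) and solves for coefficients that kill the $\sigma^{-3}$ and $\sigma^{-4}$ leading terms in the Taylor expansion supplied by Lemma \ref{lem:sigexp}. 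The $\Delta_i$ listed in the theorem are exactly the solutions of these linear systems at the appropriate weights.

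Once the $\Delta_i$ are in hand, verification splits into two routine computational checks. First, pole order: substitute Definition \ref{def:nip} and the $\sigma$-expansion into each $\Delta_i$ and confirm that all terms of order higher than $\sigma^{-2}$ cancel; since each $\Delta_i$ is built from $2$-index $\wp$-functions it is automatically Abelian (periodic with respect to $\Lambda$), so this is all that is required to place it in the space. Second, linear independence: assemble the $16$ candidates and evaluate them against the $(2,9)$ $\sigma$-expansion to sufficient order, then check that the resulting coefficient matrix has full rank. As explained in the proofs of Theorems \ref{thm:27_3pole} and \ref{thm:34_4pole}, it suffices to do this on the cyclic restriction $\mu_j \mapsto \lambda_j$ for the purposes of checking independence.

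The main obstacle will not be the formal verification but the initial discovery of the $\Delta_i$: one must search weight by weight through an increasingly large space of quadratic expressions in $\wp_{ij}$, detect which weights support a genuine cancellation of the higher-order poles, and resolve potential linear dependence among the resulting candidates (as happens in the $(2,7)$-case where $\wp^{[13]}$ and $\wp^{[22]}$ turned out dependent). Computing and storing the $(2,9)$ $\sigma$-expansion to enough terms to drive both the search and the independence check is the chief practical hurdle; the techniques of \cite{MEe09} and \cite{MEhgt10} should suffice, but the genus-four setting makes the expansion noticeably heavier than the genus-three cases treated in the main text.
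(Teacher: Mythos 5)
Your proposal matches the paper's own proof in every essential respect: the paper likewise invokes Riemann--Roch to get dimension $2^4=16$, takes the constant and the ten $\wp_{ij}$, and then finds the remaining five entries by testing arbitrary quadratic combinations of 2-index $\wp$-functions for cancellation of the higher-order poles and checking linear independence via the $\sigma$-expansion. One small slip worth correcting: since $\bu=(u_1,u_2,u_3,u_4)$ has weights $(7,5,3,1)$ in the $(2,9)$-case, the functions $\Delta_1,\dots,\Delta_5$ have weights $-12,-14,-16,-18,-20$ (consistent with the bound $-2\,\mathrm{wt}(\sigma)=-20$ of Lemma \ref{lem:minwt}), not the set $\{-8,\dots,-16\}$ you list, though this does not affect the validity of the weight-by-weight search you describe.
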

\begin{proof}
We follow the proof of Theorem \ref{thm:27_3pole}.  This time the dimension is $2^g=2^4=16$ and we have ten 2-index $\wp$-functions.  By testing arbitrary sums of quadratic terms in the 2-index $\wp$-functions we find that we can identify combinations that have poles of order only two.  We find five linearly independent combinations which can fill the missing basis entries.   \\
\end{proof}
Note that we could have alternatively used $Q$-functions as discussed in Section \ref{SEC_Bases_gen}.  However, The $\Delta$-functions are advantageous since it allows the theory to be completely realised in terms of 2 and 3-index $\wp$-functions.  As discussed in Section \ref{SEC_DE}, this appears to be a feature unique to the hyperelliptic cases.  Note that while the $\mathcal{T}, \mathcal{F}$ and $\mathcal{G}$-functions introduced in Section \ref{SEC_Bases} had reduced poles structures in general, these $\Delta$-function have poles of order two only in the (2,9)-case.

\begin{theorem} \label{thm:29_3pole}
The basis for $\Gamma \big( J, \mathcal{O}(3 \Theta^{[3]} ) \big)$ is given by
\begin{align} \label{eq:29_3pole}
\begin{array}{ccccccccccccccccccc}
(\ref{eq:29_2pole})&\op& \C\wp_{111} &\op& \C\wp_{112} &\op& \C\wp_{113} &\op& \C\wp_{114} &\op& \C\wp_{122} \\
                   &\op& \C\wp_{123} &\op& \C\wp_{124} &\op& \C\wp_{133} &\op& \C\wp_{134} &\op& \C\wp_{144} \\
                   &\op& \C\wp_{222} &\op& \C\wp_{223} &\op& \C\wp_{224} &\op& \C\wp_{233} &\op& \C\wp_{234} \\
                   &\op& \C\wp_{244} &\op& \C\wp_{333} &\op& \C\wp_{334} &\op& \C\wp_{344} &\op& \C\wp_{444} \\
                   &\op& \C\partial_1 \Delta_1 &\op& \C\partial_2 \Delta_1 &\op& \C\partial_3 \Delta_1
                   &\op& \C\partial_4 \Delta_1 &\op& \C\partial_3 \Delta_2 \\
                   &\op& \C\partial_4 \Delta_2 &\op& \C\partial_1 \Delta_3 &\op& \C\partial_2 \Delta_3
                   &\op& \C\partial_3 \Delta_3 &\op& \C\partial_4 \Delta_3 \\
                   &\op& \C\partial_1 \Delta_4 &\op& \C\partial_2 \Delta_4 &\op& \C\partial_3 \Delta_4
                   &\op& \C\partial_4 \Delta_4 &\op& \C\partial_1 \Delta_5 \\
                   &\op& \C\partial_2 \Delta_5 &\op& \C\partial_3 \Delta_5 &\op& \C\partial_4 \Delta_5
                   &\op& \C \T_{111333}        &\op& \C \T_{111334}        \\
                   &\op& \C \T_{111344}        &\op& \C \T_{112334}        &\op& \C \T_{122244}
                   &\op& \C \T_{122334} &\op& \C \T_{122344}               \\
                   &\op& \C \T_{122444} &\op& \C \T_{123333} &\op& \C \T_{123334}
                   &\op& \C \T_{123444} &\op& \C \T_{124444}               \\
                   &\op& \C \T_{133333} &\op& \C \T_{144444} &\op& \C \T_{222344}
                   &\op& \C \T_{222444} &\op& \C \T_{223444}               \\
                   &\op& \C \T_{224444} &\op& \C \T_{233333} &\op& \C \T_{233334}
                   &\op& \C \T_{233444} &\op& \C \T_{333334} \\
                   &\op& \C \T_{333444} &\op& \C \T_{334444} &\op& \C \T_{344444} &\op& \C U_1  &\op& \C U_2.
\end{array}
\end{align}
where
\begin{align*}
U_1 &= \textstyle 2\wp_{233}\wp_{2223} - \wp_{222}\wp_{2333} - \frac{3}{2}\wp_{223}\wp_{2233} - \wp_{33}\wp_{22223}
+ \wp_{22}\wp_{22333} + \frac{1}{2}\wp_{333}\wp_{2222}  \\
& \quad \textstyle + 6\wp_{22}\wp_{33}\wp_{223} + 6\wp_{23}\wp_{33}\wp_{222}
+ 3\wp_{23}^2\wp_{223} - 12\wp_{22}\wp_{23}\wp_{233} - 3\wp_{22}^2\wp_{333},
\\
U_2 &= \textstyle 2\wp_{234}\wp_{2223} - \wp_{222}\wp_{2334} - \frac{1}{2}\wp_{224}\wp_{2233} - \wp_{223}\wp_{2234}
- \wp_{34}\wp_{22223} + \wp_{22}\wp_{22334}  \\
&\quad \textstyle + \frac{1}{2}\wp_{334}\wp_{2222} + 6\wp_{22}\wp_{34}\wp_{223}
+ 6\wp_{23}\wp_{34}\wp_{222} + 3\wp_{23}^2\wp_{224} - 12\wp_{22}\wp_{23}\wp_{234} - 3\wp_{22}^2\wp_{334}.
\end{align*}
\end{theorem}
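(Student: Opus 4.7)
The plan is to follow the template set by Theorems \ref{thm:27_3pole}, \ref{thm:27_4pole}, and \ref{thm:34_4pole}. First I would tally the listed generators: the $16$ elements of basis (\ref{eq:29_2pole}), the $20$ 3-index $\wp$-functions, the $18$ derivatives $\partial_k\Delta_j$, the $25$ $\mathcal{T}$-functions, and $U_1, U_2$, giving $81 = 3^4$ elements, matching the Riemann--Roch dimension of $\Gamma \big( J, \mathcal{O}(3 \Theta^{[3]} ) \big)$. Each generator must be shown to belong to this space: the 2-pole basis elements have poles of order at most two by Theorem \ref{thm:29_2pole}, their first derivatives have poles of order at most three, the 3-index $\wp$-functions have poles of order exactly three by Definition \ref{def:nip}, and the $\mathcal{T}$-functions are designed via equation (\ref{eq:Tgen}) to have poles of order at most three. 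For $U_1$ and $U_2$ I would verify the pole reduction directly, substituting Definition \ref{def:nip} and checking that all contributions of order greater than three cancel.

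The next step is linear independence, carried out by evaluating each generator in the Taylor series of $\sigma$ about the origin via Lemma \ref{lem:sigexp}. As in Theorem \ref{thm:27_3pole} it suffices to work with the cyclic $(2,9)$-curve and to truncate the expansion at a weight sufficient to separate all candidates. Since every relation is Sato-homogeneous, the verification decomposes into independent checks at each weight from $0$ down to the minimum dictated by Lemma \ref{lem:minwt} (here $-30$), each of modest size; I would implement this in Maple following \cite{MEe09}.

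The substantive content is the \emph{choice} of the final twenty-seven elements. Starting from basis (\ref{eq:29_2pole}), I would successively adjoin first derivatives of its elements together with the twenty 3-index $\wp$-functions, discarding any candidate that is a $\C$-linear combination of those already accepted; this produces the $20$ $\wp_{ijk}$ and $18$ independent $\partial_k\Delta_j$. Then, examining $\mathcal{T}$-functions at decreasing weight levels, one obtains $25$ further independent elements. The main obstacle, and the only genuinely new piece of work, is that at weights $-29$ and $-27$ (computed from $\mathrm{wt}(\bu) = (7,5,3,1)$) this process falls short by one dimension each: neither a derivative of a 2-pole basis entry nor any $\mathcal{T}$-function fills the gap. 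To produce $U_1, U_2$ I would follow the recipe that yielded $T$, $F$, and $G$ in the earlier theorems: write down an ansatz of the correct weight and parity built from bilinear products of 2- and 5-index $\wp$-functions together with cubic terms in 2- and 3-index $\wp$-functions, and solve a linear system for the coefficients by demanding that the order of pole be at most three when the ansatz is re-expressed via Definition \ref{def:nip}. The stated $U_1, U_2$ are simplest representatives of the resulting family. As in earlier theorems, the basis is not unique, but by the argument following Lemma \ref{lem:minwt} the multiset of weights is forced.
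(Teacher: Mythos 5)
Your proposal matches the paper's (very terse) proof in essentially every respect: dimension $3^4=81$ by Riemann--Roch, membership and linear independence checked weight-by-weight with the $\sigma$-expansion for the cyclic curve, the bulk of the basis taken from (\ref{eq:29_2pole}), its derivatives and the twenty 3-index $\wp$-functions, then $\mathcal{T}$-functions at decreasing weights, and finally $U_1,U_2$ produced from a new class of combinations in which every term carries seven indices in total, fixed by the method of undetermined coefficients so that poles of order greater than three cancel. One small correction: the ansatz for that final class must also include products of a 3-index with a 4-index $\wp$-function (e.g.\ $\wp_{233}\wp_{2223}$, $\wp_{333}\wp_{2222}$), not only 2-index$\times$5-index bilinears and $2$\,-$2$\,-$3$ trilinears, since such terms occur in the stated $U_1,U_2$ with nonzero coefficients and are needed for the pole cancellation to go through.
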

\begin{proof}
This time the dimension is $3^g=3^4=81$ and we find that we can identify 54 functions using the basis (\ref{eq:27_2pole}) and its derivatives.  We then proceed to add 36 of the $\mathcal{T}$-functions defined in equation (\ref{eq:Tgen}).
To find the final two functions we used a new class formed from a sum of $\wp$-functions in which each term has seven indices.
\\
\end{proof}
In the (2,9)-case $\sigma(\bu)$ has weight 10 and hence the minimal weight for functions in this basis is $30$,
in accordance with Lemma \ref{lem:minwt}.
This is achieved by the function  $\mathcal{T}_{111333}$, although this is not a unique choice.
For example, it could be replaced by $\mathcal{T}_{222222}$.
Both functions will reduce to a constant over $\sigma(\bu)^3$.

\section{The 4-index relations associated to genus three curves} \label{APP_4index}

The complete set of 4-index relations associated to the $(2,7)$-curve is given below.
\begin{align*}
\bm{(-4)} \quad
\wp_{3333} &= 4\wp_{23} +4\wp_{33}\lambda_{{6}}+2\lambda_{{5}}+6\wp_{33}^{2} \\
\bm{(-6)} \quad
\wp_{2333} &= 6\wp_{13} -2\wp_{22} +4\wp_{23} \lambda_{{6}}+6\wp_{23} \wp_{33} \\
\bm{(-8)} \quad
\wp_{2233} &= -2\wp_{12} +4\wp_{13} \lambda_{{6}}+2\wp_{23} \lambda_{{5}}
+2\wp_{22} \wp_{33} +4\wp_{23}^{2} \\
\bm{(-8)} \quad
\wp_{1333} &= -2\wp_{12} +4\wp_{13} \lambda_{{6}}+6\wp_{13} \wp_{33} \\
\bm{(-10)} \quad
\wp_{2223} &= -6\wp_{11} +4\wp_{13} \lambda_{{5}}
+4\wp_{23} \lambda_{{4}}-2\wp_{33} \lambda_{{3}}-4\lambda_{{2}}+6\wp_{22} \wp_{23} \\
\bm{(-10)} \quad
\wp_{1233} &= 2\wp_{13} \lambda_{{5}}+2\wp_{12} \wp_{33} +4\wp_{13}\wp_{23} \\
\bm{(-12)} \quad \wp_{1223} &=2\wp_{12} \wp_{23} +4\wp_{13} \wp_{22} +4\wp_{13} \lambda_{{4}}
- 2\lambda_{{1}}-2\ \wp_{13}^{2}+2\wp_{11} \wp_{33} \\
\bm{(-12)} \quad
\wp_{2222} &= 6\wp_{22}^{2}-6\lambda_{{1}}+12\wp_{12} \wp_{23} +12 \wp_{13}^{2}-12\wp_{13} \wp_{22}
- 12\wp_{11} \wp_{33} +2\lambda_{{5}}\lambda_{{3}}-8\lambda_{{6}}\lambda_{{2}} \\
&\quad \textstyle +4\wp_{22} \lambda_{{4}} - 12\wp_{33} \lambda_{{2}} + 4\wp_{12} \lambda_{{5}} + 4\wp_{23}\lambda_{{3}}
- 12\wp_{11} \lambda_{{6}} \\
\bm{(-12)} \quad
\wp_{1133} &= 2\wp_{12}\wp_{23} + 6\wp_{13}^2 - 2\wp_{13}\wp_{22} \\
\bm{(-14)} \quad
\wp_{1123} &= 2\wp_{11} \wp_{23} +4\wp_{12} \wp_{13} +2\wp_{13} \lambda_{{3}}-4\lambda_{{0}} \\
\bm{(-14)} \quad
\wp_{1222} &= 6\wp_{12} \wp_{22} -2\wp_{11} \lambda_{{5}}+4\wp_{13} \lambda_{{3}}-6\wp_{33} \lambda_{{1}}
+4\wp_{12} \lambda_{{4}}-8\lambda_{{0}}-4\lambda_{{6}}\lambda_{{1}}\\
\bm{(-16)} \quad
\wp_{1113} &= -2\wp_{23} \lambda_{{1}}+4\wp_{33} \lambda_{{0}}+4\wp_{13} \lambda_{{2}}+6\wp_{11} \wp_{13} \\
\bm{(-16)} \quad
\wp_{1122} &= -2\wp_{23} \lambda_{{1}}+2\wp_{12} \lambda_{{3}}-8\wp_{33} \lambda_{{0}}
+4\wp_{13} \lambda_{{2}}+2\wp_{11} \wp_{22} +4\wp_{12}^{2}-8\lambda_{{6}}\lambda_{{0}} \\
\bm{(-18)} \quad
\wp_{1112} &= 6\wp_{11} \wp_{12} -4\lambda_{{5}}\lambda_{{0}}-2\wp_{22}\lambda_{{1}}+6\wp_{13} \lambda_{{1}}
-8\wp_{23} \lambda_{{0}}+4\wp_{12} \lambda_{{2}} \\
\bm{(-20)} \quad
\wp_{1111} &= 6\wp_{11}^{2}+2\lambda_{{3}}\lambda_{{1}}-8\lambda_{{4}}\lambda_{{0}}+4\wp_{11}\lambda_{{2}}
+4\wp_{12} \lambda_{{1}}-12\wp_{22} \lambda_{{0}}+16\wp_{13}\lambda_{{0}}
\end{align*}

\noindent The complete set of 4-index relations associated to the $(3,4)$-curve is given below.
\begin{align*}
\bm{(-4)} \quad
\wp_{3333}  &= -3\wp_{22} +6  \wp_{33}^{2} \\
\bm{(-5)} \quad
\wp_{2333}  &= 3\wp_{33}\lambda_{{3}} +6\wp_{23} \wp_{33} \\
\bm{(-6)} \quad
\wp_{2233}  &= 4\wp_{13} +3\wp_{23}\lambda_{{3}}+2\lambda_{{2}}+2\wp_{22} \wp_{33} +4  \wp_{23}^{2} \\
\bm{(-7)} \quad
\wp_{2223}  &= 3\wp_{22}\lambda_{{3}}+6\wp_{22} \wp_{23} \\
\bm{(-8)} \quad
\wp_{2222}  &= -3\wp_{33} {\lambda_{{3}}}^{2}+12\wp_{33}\lambda_{{2}} -4Q_{{1333}}+6  \wp_{22}^{2}
\end{align*}
\begin{align*}
\bm{(-8)} \quad
\wp_{1333}  &= Q_{1333} + 6\wp_{13}\wp_{33} \\
\bm{(-9)} \quad
\wp_{1233}  &= 3\wp_{13}\lambda_{{3}}+\lambda_{{1}}+2\wp_{12} \wp_{33} +4\wp_{13} \wp_{23} \\
\bm{(-10)} \quad
\wp_{1223}  &= -2\wp_{11} +3\wp_{12}\lambda_{{3}}+4\wp_{12} \wp_{23} +2\wp_{13} \wp_{22} \\
\bm{(-11)} \quad
\wp_{1222}  &= -Q_{{1333}}\lambda_{{3}}+6\wp_{33}\lambda_{{1}}+6\wp_{12} \wp_{22} \\
\bm{(-12)} \quad
\wp_{1133}  &= 2\wp_{13}\lambda_{{2}}-\wp_{23}\lambda_{{1}}+2\wp_{11} \wp_{33}+4  \wp_{13}^{2} \\
\bm{(-13)} \quad
\wp_{1123}  &= 2\wp_{12}\lambda_{{2}}-\wp_{22}\lambda_{{1}}+2\wp_{11} \wp_{23}+4\wp_{12} \wp_{13} \\
\bm{(-14)} \quad
\wp_{1122}  &= \textstyle -\frac{2}{3}Q_{{1333}}\lambda_{{2}}+\wp_{33}\lambda_{{3}}\lambda_{{1}}+8\wp_{33}\lambda_{{0}}
+2\wp_{11} \wp_{22} +4  \wp_{12}^{2} \\
\bm{(-16)} \quad
\wp_{1113}  &= 3\wp_{12}\lambda_{{1}}-6\wp_{22}\lambda_{{0}}+6\wp_{11} \wp_{13} \\
\bm{(-17)} \quad
\wp_{1112}  &= -Q_{{1333}}\lambda_{{1}}+6\wp_{33}\lambda_{{3}}\lambda_{{0}}+6\wp_{11} \wp_{12} \\
\bm{(-20)} \quad
\wp_{1111}  &= -4Q_{{1333}}\lambda_{{0}}-3\wp_{33} {\lambda_{{1}}}^{2}
+12\wp_{33}\lambda_{{2}}\lambda_{{0}}+6  \wp_{11}^{2}
\end{align*}

\section{Quadratic 3-index relations associated with the cyclic hyperelliptic curve of genus three} \label{SEC_APP27}

This appendix contains the complete set of quadratic 3-index relations associated to the cyclic $(2,7)$-curve.
Note that $\Delta$ is a function quadratic in the 2-index $\wp$-functions, as defined in equation (\ref{eq:27Delta}).  It was used in the basis of fundamental Abelian functions (\ref{eq:27_2pole}), and occurs here only linearly, or multiplied by a single 2-index $\wp$-function.  Hence each relation can be rewritten as a polynomial in 2-index $\wp$-functions of degree three.  The relations are presented in 
weight order as indicated by the number in brackets.

\begin{align*}
&\bm{(-6)} &  \wp_{333}^2 &= \textstyle 4 \wp_{33}^{3}
+4\lambda_{{4}}
+4\lambda_{{5}}\wp_{33}
+4\lambda_{{6}} \wp_{33}^{2}
-4\wp_{13}
+4\wp_{22}
+4\wp_{33} \wp_{23}
\\
&\bm{(-8)} &  \wp_{233}\wp_{333} &= \textstyle 4\wp_{23}\wp_{33}^{2}
+2\lambda_{{3}}
+2\lambda_{{5}}\wp_{23}
+4\lambda_{{6}}\wp_{33}\wp_{23}
+2\wp_{12}
+4\wp_{33}\wp_{13} \\
& & & \quad \textstyle -2\wp_{33}\wp_{22}
+2 \wp_{23}^{2}
\\
&\bm{(-10)} & \wp_{233}^{2} &= \textstyle 8\wp_{23}\wp_{13}
-4\wp_{23}\wp_{22}
+4\wp_{11}
+4\lambda_{{2}}
+4\wp_{23}^{2}\wp_{33}
+4\lambda_{{6}}\wp_{23}^{2}
\\
&\bm{(-10)} & \wp_{223}\wp_{333} &= \textstyle -4\wp_{33}\wp_{12}
-2\wp_{23}\wp_{13} + 4\wp_{23}\wp_{22} + 4\lambda_{{6}}\wp_{33}\wp_{13}
+2\lambda_{{5}}\wp_{33}\wp_{23}
+4\lambda_{{4}}\wp_{23} \\
& & & \quad \textstyle -2\wp_{11} +2\lambda_{{5}}\wp_{13}
-2\lambda_{{3}}\wp_{33}+2\wp_{22}\wp_{33}^{2}
+2\wp_{23}^{2}\wp_{33}
\\
&\bm{(-10)} & \wp_{133}\wp_{333} &= \textstyle -2\wp_{33}\wp_{12}
+2\wp_{23}\wp_{13}
+4\lambda_{{6}}\wp_{33}\wp_{13}
-2\wp_{11}+2\lambda_{{5}}\wp_{13}
+4\wp_{13}\wp_{33}^{2}
\\
&\bm{(-12)} & \wp_{223} \wp_{233}  &= \textstyle2\lambda_{{1}}
+4\lambda_{{6}}\wp_{23} \wp_{13}
+2\wp_{22} \wp_{23} \wp_{33}
+2  \wp_{23}^{3}
-2\wp_{13} \wp_{22}
+4  \wp_{13}^{2}
-4\wp_{11} \wp_{33} \\
& & & \quad \textstyle +2\lambda_{{5}}  \wp_{23}^{2}
-4\lambda_{{2}}\wp_{33} +2\lambda_{{3}}\wp_{23}
\\
&\bm{(-12)} & \wp_{222} \wp_{333}  &= \textstyle-16  \wp_{13}^{2}
-4  \wp_{22}^{2}
+20\wp_{13} \wp_{22} -2\wp_{12} \wp_{23}
+6\wp_{22} \wp_{23} \wp_{33}
+4\wp_{11} \wp_{33}
-2  \wp_{23}^{3} \\
& & & \quad \textstyle -2\lambda_{{5}}\lambda_{{3}}
-8\lambda_{{6}}\wp_{23} \wp_{13}
-2\lambda_{{5}}^{2}\wp_{23}
-2\lambda_{{5}}\wp_{12}
-4\lambda_{{4}}\wp_{22}
+16\lambda_{{4}}\wp_{13}
-4\lambda_{{3}}  \wp_{33}^{2} \\
& & & \quad \textstyle +8\lambda_{{4}}\wp_{33} \wp_{23}
-2\lambda_{{5}}\wp_{33} \wp_{22}
-4\lambda_{{6}}\wp_{33} \wp_{12}
+8\lambda_{{5}}\wp_{33} \wp_{13}
+8\lambda_{{6}}\wp_{23} \wp_{22} \\
& & & \quad \textstyle +8\lambda_{{6}}\lambda_{{4}}\wp_{23}
-4\lambda_{{6}}\lambda_{{3}}\wp_{33}
-4\lambda_{{5}}  \wp_{23}^{2}
-2\lambda_{{3}}\wp_{23}
\\
&\bm{(-12)} & \wp_{133} \wp_{233}  &= \textstyle 2\lambda_{{1}}
+4\lambda_{{6}}\wp_{23} \wp_{13}
+4\wp_{13} \wp_{23} \wp_{33}
-2\wp_{13} \wp_{22}
-2\wp_{12} \wp_{23}
+4  \wp_{13}^{2}
\\
&\bm{(-12)} & \wp_{123} \wp_{333}  &= \textstyle 2\wp_{11} \wp_{33}
-4\wp_{13}^{2}
+4\wp_{13}\wp_{22}
+4\lambda_{{4}}\wp_{13}
+2\wp_{12}\wp_{33}^{2} \\
& & & \quad \textstyle +2\wp_{13}\wp_{23}\wp_{33}
+2\lambda_{{5}}\wp_{33}\wp_{13}
\\
&\bm{(-14)} &  \wp_{223}^{2} &= \textstyle 4\wp_{33}\Delta
-4\wp_{23} \wp_{11}
-4\wp_{13} \wp_{12} +4\lambda_{{0}}
-4\lambda_{{3}}\wp_{33} \wp_{23}
+4\lambda_{{6}}\wp_{13}^{2} \\
& & & \quad \textstyle +4\lambda_{{5}}\wp_{23} \wp_{13}
+4\lambda_{{2}}\wp_{33}^{2}
+4\lambda_{{4}}\wp_{23}^{2}
-4\lambda_{{1}}\wp_{33}
+4\wp_{22}\wp_{23}^{2}
\\
&\bm{(-14)} & \wp_{133} \wp_{223}  &= \textstyle 4\lambda_{{0}}
-2\wp_{23} \wp_{11}
-4\wp_{13} \wp_{12}
+4\lambda_{{6}}\wp_{13}^{2}
+2\lambda_{{5}}\wp_{23} \wp_{13}
-2\lambda_{{1}}\wp_{33}\\
& & & \quad \textstyle +2\wp_{13}\wp_{23}^{2}
+2\wp_{13}\wp_{33}\wp_{22}
\\
\end{align*}
\begin{align*}
&\bm{(-14)} &  \wp_{133}^{2} &= \textstyle 4\wp_{13}^{2}\wp_{33}
-4\wp_{13} \wp_{12}
+4\lambda_{{0}}+4\lambda_{{6}}  \wp_{13}^{2}
\\
&\bm{(-14)} & \wp_{222} \wp_{233}  &= \textstyle-8\wp_{33} \Delta
-4\wp_{23} \wp_{11} +4\lambda_{{6}}\lambda_{{3}}\wp_{23} -4\lambda_{{6}}\Delta
-4\lambda_{{6}}\wp_{13}^{2}
-8\lambda_{{6}}\lambda_{{2}}\wp_{33} \\
& & & \quad \textstyle +2\lambda_{{5}}\wp_{23} \wp_{22}
+8\wp_{13} \wp_{12}
-2\wp_{22} \wp_{12}
-4\lambda_{{2}}\lambda_{{5}}
+4\lambda_{{3}}\wp_{33} \wp_{23}
+2\wp_{22}^{2}\wp_{33} \\
& & & \quad \textstyle +2\wp_{22}\wp_{23}^{2}
-8\lambda_{{2}}  \wp_{33}^{2}
+4\lambda_{{6}}\wp_{13} \wp_{22}
-4\lambda_{{6}}\wp_{11} \wp_{33}
-4\lambda_{{1}}\wp_{33}
-4\lambda_{{5}}\wp_{11} \\
& & & \quad \textstyle +8\lambda_{{3}}\wp_{13}
-2\lambda_{{3}}\wp_{22}
-4\lambda_{{2}}\wp_{23}
\\
&\bm{(-14)} & \wp_{123} \wp_{233}  &= \textstyle2\wp_{13} \wp_{12}
+2\lambda_{{5}}\wp_{23} \wp_{13}
-2\lambda_{{1}}\wp_{33}
+2\wp_{13}   \wp_{23}^{2}
+2\wp_{12} \wp_{33} \wp_{23}
+2\lambda_{{3}}\wp_{13}
\\
&\bm{(-14)} & \wp_{122} \wp_{333}  &= \textstyle4\wp_{33} \Delta
-2\lambda_{{5}}\wp_{33} \wp_{12}
-8\lambda_{{6}}  \wp_{13}^{2}
+8\lambda_{{6}}\lambda_{{4}}\wp_{13}
+8\lambda_{{4}}\wp_{33} \wp_{13}
+6\wp_{13} \wp_{12} \\
& & & \quad \textstyle -4\lambda_{{5}}\wp_{23} \wp_{13}
-2{\lambda_{{5}}}^{2}\wp_{13}
-4\lambda_{{4}}\wp_{12}
+2\lambda_{{5}}\wp_{11}
+2\lambda_{{3}}\wp_{13}
+8\lambda_{{6}}\wp_{13} \wp_{22} \\
& & & \quad \textstyle +4\lambda_{{6}}\wp_{11} \wp_{33}
-2\wp_{13}  \wp_{23}^{2}
+2\wp_{13} \wp_{33} \wp_{22}
+4\wp_{12} \wp_{33} \wp_{23}
-4\wp_{22} \wp_{12} +4\wp_{23} \wp_{11}
\\
&\bm{(-14)} & \wp_{113} \wp_{333}  &= \textstyle-2\wp_{33} \Delta
+2\wp_{23} \wp_{11}
+2\wp_{13} \wp_{12}
+2  \wp_{13}^{2}\wp_{33}
+2\wp_{11}  \wp_{33}^{2}
+2\lambda_{{3}}\wp_{13}
\\
&\bm{(-16)} & \wp_{222} \wp_{223}  &= \textstyle
-4\lambda_{{6}}\wp_{23} \wp_{11}
-4\wp_{23}\Delta
+4\lambda_{{5}}\wp_{12} \wp_{23}
-4\lambda_{{5}}\wp_{11} \wp_{33}
+4\lambda_{{4}}\wp_{23} \wp_{22} \\
& & & \quad \textstyle -2\lambda_{{3}}\wp_{33} \wp_{22}
-4\lambda_{{6}}\lambda_{{1}}\wp_{33}
+2\lambda_{{5}}\lambda_{{3}}\wp_{23}
-4\lambda_{{2}}\wp_{33} \wp_{23}
-4\lambda_{{2}}\lambda_{{5}}\wp_{33}
-8\lambda_{{4}}\wp_{11} \\
& & & \quad \textstyle +2\lambda_{{3}}\wp_{23}^{2}
+4\lambda_{{3}}\wp_{12}
+8\lambda_{{2}}\wp_{13}
-8\lambda_{{2}}\wp_{22}
-2\lambda_{{1}}\wp_{23}
-8\lambda_{{0}}\wp_{33} \\
& & & \quad \textstyle +4\wp_{22}^{2}\wp_{23}
+4\lambda_{{5}}  \wp_{13}^{2}
-6\wp_{22} \wp_{11}
+2\wp_{12}^{2}
+2{\lambda_{{3}}}^{2}
-8\lambda_{{2}}\lambda_{{4}}
-2\lambda_{{5}}\lambda_{{1}}
\\
&\bm{(-16)} & \wp_{133} \wp_{222}  &= \textstyle
-4\lambda_{{6}}\wp_{23} \wp_{11}
-4\wp_{23}\Delta
+2\lambda_{{5}}\wp_{12} \wp_{23}
-4\lambda_{{6}}\lambda_{{1}}\wp_{33}
-2\wp_{12}   \wp_{23}^{2} \\
& & & \quad \textstyle -2\lambda_{{1}}\wp_{23} -8\lambda_{{0}}\wp_{33}
+4\lambda_{{5}}  \wp_{13}^{2}
-8\wp_{13} \wp_{11}
+2\wp_{22} \wp_{11} \\
& & & \quad \textstyle -2\lambda_{{5}}\lambda_{{1}}
+4\wp_{13} \wp_{23} \wp_{22}
+2\wp_{12} \wp_{33} \wp_{22}
-4\lambda_{{1}}  \wp_{33}^{2}
\\
&\bm{(-16)} & \wp_{123} \wp_{223}  &= \textstyle2\wp_{23} \Delta
+2\wp_{12}\wp_{23}^{2}
-4\lambda_{{0}}\wp_{33} -2\wp_{13} \wp_{11}
+2\lambda_{{5}}  \wp_{13}^{2}
-2\lambda_{{3}}\wp_{33} \wp_{13} \\
& & & \quad \textstyle +4\lambda_{{4}}\wp_{23} \wp_{13}
+2\wp_{13} \wp_{23} \wp_{22}
+2\lambda_{{1}}  \wp_{33}^{2}
\\
&\bm{(-16)} & \wp_{123} \wp_{133}  &= \textstyle
-4\lambda_{{0}}\wp_{33}
-2\wp_{13} \wp_{11}
+2  \wp_{13}^{2}\wp_{23}
+2\wp_{12} \wp_{13} \wp_{33}
+2\lambda_{{5}}  \wp_{13}^{2}
\\
&\bm{(-16)} & \wp_{122} \wp_{233}  &= \textstyle 2\lambda_{{5}}\wp_{12} \wp_{23}
-2\lambda_{{5}}\wp_{11} \wp_{33}
-4\lambda_{{6}}\lambda_{{1}}\wp_{33}
+2\wp_{12}\wp_{23}^{2}
-2\lambda_{{3}}\wp_{12}
+4\lambda_{{2}}\wp_{13} \\
& & & \quad \textstyle -4\lambda_{{1}}\wp_{23}
-2\lambda_{{5}}  \wp_{13}^{2}
+4\wp_{13} \wp_{11}
-2\wp_{12}^{2}
-2\lambda_{{5}}\lambda_{{1}}
+4\lambda_{{6}}\wp_{13} \wp_{12} \\
& & & \quad \textstyle +4\lambda_{{6}}\lambda_{{3}}\wp_{13}
+4\lambda_{{3}}\wp_{33} \wp_{13}
+2\wp_{12} \wp_{33} \wp_{22}
+2\lambda_{{5}}\Delta
-4\lambda_{{1}}\wp_{33}^{2}
\\
&\bm{(-16)} & \wp_{113} \wp_{233}  &= \textstyle
-2\wp_{23} \Delta
+4\lambda_{{2}}\wp_{13}
-2\lambda_{{1}}\wp_{23}
+4\wp_{13} \wp_{11}
+2  \wp_{13}^{2}\wp_{23}
+2\wp_{11} \wp_{23} \wp_{33}
\\
&\bm{(-16)} & \wp_{112} \wp_{333}  &= \textstyle4\lambda_{{6}}\wp_{23} \wp_{11}
+2\wp_{23}\Delta
-2\lambda_{{5}}\wp_{11} \wp_{33}
-2\lambda_{{3}}\wp_{12}
-2\wp_{13}^{2}\wp_{23}
-2\lambda_{{5}}  \wp_{13}^{2} \\
& & & \quad \textstyle +4\wp_{13} \wp_{11}
-2\wp_{22} \wp_{11}
-2  \wp_{12}^{2}
+4\lambda_{{6}}\wp_{13} \wp_{12}
+4\lambda_{{6}}\lambda_{{3}}\wp_{13}\\
& & & \quad \textstyle +4\lambda_{{3}}\wp_{33} \wp_{13}
+4\wp_{12} \wp_{13} \wp_{33}
+2\wp_{11} \wp_{23} \wp_{33}
+2\lambda_{{5}}\Delta
\\
&\bm{(-18)} &    \wp_{222}^{2} &= \textstyle
-8\lambda_{{3}}\wp_{23} \wp_{13}
+8\wp_{23}\lambda_{{5}}\lambda_{{2}}
-8\lambda_{{5}}\lambda_{{3}}\wp_{13}
+4\wp_{23} \wp_{22} \lambda_{{3}}
-16\lambda_{{6}}\lambda_{{4}}\wp_{11} \\
& & & \quad \textstyle -16\lambda_{{6}}\lambda_{{2}}\wp_{22}
+4\lambda_{{5}}\lambda_{{3}}\wp_{22}
-16\lambda_{{2}}\lambda_{{4}}\wp_{33}
-8\wp_{33} \lambda_{{1}}\lambda_{{5}}
+8\lambda_{{3}}\wp_{12} \lambda_{{6}} \\
& & & \quad \textstyle +16\lambda_{{0}}  \wp_{33}^{2}
-4\wp_{12}^{2}\wp_{33}
-4\wp_{11}   \wp_{23}^{2}
+8\wp_{12} \wp_{13} \wp_{23}
-8\lambda_{{1}}\wp_{33} \wp_{23} \\
& & & \quad \textstyle +16\lambda_{{6}}\wp_{13} \wp_{11}
+16\lambda_{{6}}\lambda_{{2}}\wp_{13}
+4\wp_{11} \wp_{22} \wp_{33}
-4  \wp_{13}^{2}\wp_{22}
+16\lambda_{{1}}\wp_{13} \\
& & & \quad \textstyle -16\lambda_{{1}}\lambda_{{4}}
-16\lambda_{{6}}\lambda_{{4}}\lambda_{{2}}
-16\wp_{22} \wp_{11} \lambda_{{6}}
-16\wp_{22} \lambda_{{2}}\wp_{33}
+4\lambda_{{5}}\wp_{22} \wp_{12} \\
& & & \quad \textstyle +4\lambda_{{6}}{\lambda_{{3}}}^{2}
+4{\lambda_{{5}}}^{2}\lambda_{{2}}
+4  \wp_{23}^{2}\lambda_{{2}}
+4  \wp_{22}^{2}\lambda_{{4}}
+4  \wp_{12}^{2}\lambda_{{6}}
+4{\lambda_{{5}}}^{2}\wp_{11} \\
& & & \quad \textstyle +4{\lambda_{{3}}}^{2}\wp_{33}
-16\lambda_{{4}}\Delta-8\lambda_{{3}}\wp_{11}
+32\wp_{13} \Delta-20\wp_{22} \Delta
-8\lambda_{{5}}\wp_{13} \wp_{12}
+4  \wp_{22}^{3} \\
& & & \quad \textstyle -16\lambda_{{1}}\wp_{22}
+16\lambda_{{2}}\wp_{33} \wp_{13} -8\wp_{12} \wp_{11}
+16\lambda_{{4}}\wp_{13}^{2}
\\
&\bm{(-18)} &   \wp_{123} \wp_{222}  &= \textstyle
-2\wp_{33} \lambda_{{1}}\lambda_{{5}}
+4\wp_{13} \wp_{22} \lambda_{{4}}
+4\lambda_{{4}}\wp_{11} \wp_{33}
-2\wp_{23} \lambda_{{5}}\wp_{11}
-4\lambda_{{1}}\wp_{22} \\
& & & \quad \textstyle
+8\wp_{13} \Delta
-2\wp_{22}\Delta-2\wp_{12} \wp_{11}
+4\wp_{12} \wp_{13} \wp_{23}
-2\lambda_{{1}}\wp_{33} \wp_{23}
+2\wp_{11} \wp_{22} \wp_{33}\\
& & & \quad \textstyle +4\lambda_{{4}}  \wp_{13}^{2}
+8\lambda_{{0}}  \wp_{33}^{2}
-2  \wp_{12}^{2}\wp_{33}
-2\wp_{11}   \wp_{23}^{2}
-2\lambda_{{3}}\wp_{33} \wp_{12}
+2\wp_{13}   \wp_{22}^{2} \\
& & & \quad \textstyle -2\lambda_{{3}}\wp_{11}
-4\lambda_{{4}}\Delta
-2  \wp_{13}^{2}\wp_{22}
+4\lambda_{{1}}\wp_{13}
-4\lambda_{{1}}\lambda_{{4}}
+2\wp_{12} \wp_{23} \wp_{22}
\\
&\bm{(-18)} &    \wp_{123}^{2} &= \textstyle 4\wp_{13}\Delta
+4\wp_{12} \wp_{13} \wp_{23}
+4\lambda_{{4}}  \wp_{13}^{2}
+4\lambda_{{0}}  \wp_{33}^{2}
\\
\end{align*}
\begin{align*}
&\bm{(-18)} &   \wp_{122} \wp_{223}  &= \textstyle
-4\lambda_{{1}}\lambda_{{4}}
-2\wp_{33}\lambda_{{1}}\lambda_{{5}}
+4\lambda_{{5}}\wp_{13} \wp_{12}
+4\lambda_{{3}}\wp_{23} \wp_{13}
+4\wp_{13} \wp_{22} \lambda_{{4}}
-4\lambda_{{1}}\wp_{22} \\
& & & \quad \textstyle
-8\lambda_{{6}}\lambda_{{0}}\wp_{33}
-4\lambda_{{6}}\wp_{13} \wp_{11}
-4\lambda_{{5}}\lambda_{{0}}
-12\wp_{13} \Delta
+4\wp_{22}\Delta
-4\wp_{12} \wp_{13} \wp_{23} \\
& & & \quad \textstyle +2\wp_{13}^{2}\wp_{22}
+6\lambda_{{1}}\wp_{13}
-8\lambda_{{0}}\wp_{23}
-8\lambda_{{4}}  \wp_{13}^{2}
-8\lambda_{{0}}  \wp_{33}^{2}
+2  \wp_{12}^{2}\wp_{33}
+2\wp_{11}   \wp_{23}^{2} \\
& & & \quad \textstyle -4\lambda_{{2}}\wp_{33} \wp_{13}
-2\wp_{11} \wp_{22} \wp_{33}
-2\lambda_{{3}}\wp_{11}
+2\lambda_{{5}}\lambda_{{3}}\wp_{13}
+4\wp_{12}\wp_{23} \wp_{22}
\\
&\bm{(-18)} &   \wp_{122} \wp_{133}  &= \textstyle
-4\wp_{13} \Delta
+2\wp_{12} \wp_{11}
-4\lambda_{{5}}\lambda_{{0}}
+2  \wp_{12}^{2}\wp_{33}
+2  \wp_{13}^{2}\wp_{22}\\
& & & \quad \textstyle -8\lambda_{{6}}\lambda_{{0}}\wp_{33}
+2\lambda_{{5}}\wp_{13} \wp_{12}
-4\lambda_{{6}}\wp_{13} \wp_{11}
+2\lambda_{{1}}\wp_{13}
-8\lambda_{{0}}\wp_{23}
-8\lambda_{{0}}  \wp_{33}^{2}
\\
&\bm{(-18)} &   \wp_{113} \wp_{223}  &= \textstyle
-4\wp_{13} \Delta
+2\wp_{11}\wp_{23}^{2}
+2\lambda_{{3}}\wp_{23} \wp_{13}
+2\lambda_{{1}}\wp_{33} \wp_{23}
-4\lambda_{{2}}\wp_{33} \wp_{13}
+2  \wp_{13}^{2}\wp_{22} \\
& & & \quad \textstyle  +2\lambda_{{1}}\wp_{13} -4\lambda_{{0}}\wp_{23}
\\
&\bm{(-18)} &   \wp_{113} \wp_{133}  &= \textstyle -2\wp_{13} \Delta
+2\wp_{11} \wp_{13}\wp_{33}
+2  \wp_{13}^{3}
+2\lambda_{{1}}\wp_{13}
-4\lambda_{{0}}\wp_{23}
\\
&\bm{(-18)} &   \wp_{112} \wp_{233}  &= \textstyle 8\wp_{13} \Delta
-2\wp_{22} \Delta
-4\wp_{12} \wp_{11}
+4\wp_{12} \wp_{13} \wp_{23}
-4\lambda_{{1}}\wp_{33} \wp_{23}
+8\lambda_{{2}}\wp_{33} \wp_{13} \\
& & & \quad \textstyle +8\lambda_{{6}}\wp_{13} \wp_{11}
+8\lambda_{{6}}\lambda_{{2}}\wp_{13}
+2\wp_{11} \wp_{22} \wp_{33}
-4\lambda_{{6}}\lambda_{{1}}\wp_{23}
-4\lambda_{{2}}\wp_{12} \\
& & & \quad \textstyle -2  \wp_{13}^{2}\wp_{22}
-4\lambda_{{1}}\wp_{13} +2\lambda_{{1}}\wp_{22}
\\
&\bm{(-18)} &   \wp_{111} \wp_{333}  &= \textstyle 2\lambda_{{5}}\wp_{13} \wp_{12}
+2\lambda_{{3}}\wp_{23} \wp_{13}
+6\wp_{11} \wp_{13} \wp_{33}
-4\lambda_{{4}}\wp_{11} \wp_{33}
+2\wp_{23} \lambda_{{5}}\wp_{11} \\
& & & \quad \textstyle -4\wp_{12} \wp_{13} \wp_{23}
-2\wp_{11} \wp_{22} \wp_{33}
+2  \wp_{13}^{2}\wp_{22}
-4\lambda_{{4}}  \wp_{13}^{2}
+2  \wp_{12}^{2}\wp_{33}
+2\wp_{11}  \wp_{23}^{2} \\
& & & \quad \textstyle +2\lambda_{{5}}\lambda_{{3}}\wp_{13}
+4\lambda_{{4}}\Delta -10\wp_{13} \Delta+4\wp_{22}\Delta
-2\wp_{13}^{3} +2\lambda_{{3}}\wp_{33} \wp_{12}
\\
&\bm{(-20)} &   \wp_{122} \wp_{222}  &= \textstyle
-8\lambda_{{6}}\lambda_{{4}}\lambda_{{1}}
-4\lambda_{{2}}\wp_{33} \wp_{12}
-6\lambda_{{1}}\wp_{33} \wp_{22}
+4\lambda_{{3}}\wp_{13} \wp_{22}
-16\lambda_{{0}}\wp_{22}
+2\lambda_{{1}}  \wp_{23}^{2} \\
& & & \quad \textstyle +4\lambda_{{5}}\lambda_{{1}}\wp_{23}
+8\lambda_{{6}}\lambda_{{1}}\wp_{13}
-8\lambda_{{5}}\lambda_{{0}}\wp_{33}
+4\lambda_{{4}}\wp_{22} \wp_{12}
-8\lambda_{{1}}\lambda_{{4}}\wp_{33} \\
& & & \quad \textstyle -4\lambda_{{6}}\wp_{12} \wp_{11}
-8\lambda_{{6}}\lambda_{{1}}\wp_{22}
-4\lambda_{{6}}\lambda_{{3}}\wp_{11}
+4\wp_{12}\wp_{22}^{2}
+2\lambda_{{5}}  \wp_{12}^{2}
-4\wp_{12} \Delta \\
& & & \quad \textstyle -16\lambda_{{4}}\lambda_{{0}}
-2\lambda_{{1}}\wp_{12}
-4\lambda_{{3}}\Delta-4\lambda_{{2}}\wp_{11}
+2\lambda_{{5}}\lambda_{{3}}\wp_{12} +16\lambda_{{0}}\wp_{13}
+2{\lambda_{{5}}}^{2}\lambda_{{1}} \\
& & & \quad \textstyle -2\lambda_{{5}}\wp_{22} \wp_{11}
-2\lambda_{{3}}\lambda_{{1}}
\\
&\bm{(-20)} &   \wp_{122} \wp_{123}  &= \textstyle 2\wp_{12}^{2}\wp_{23}
-8\lambda_{{4}}\lambda_{{0}}
+2\wp_{12} \Delta
+2\wp_{11}^{2}
+2\wp_{12} \wp_{22} \wp_{13}
+4\lambda_{{4}}\wp_{13} \wp_{12}\\
& & & \quad \textstyle -2\lambda_{{5}}\wp_{13} \wp_{11}
-2\lambda_{{1}}\wp_{33} \wp_{13}
-4\lambda_{{5}}\lambda_{{0}}\wp_{33}
-8\lambda_{{0}}\wp_{22}
+2\lambda_{{3}}  \wp_{13}^{2}
+8\lambda_{{0}}\wp_{13}
\\
&\bm{(-20)} &   \wp_{113} \wp_{222}  &= \textstyle 2\lambda_{{3}}\wp_{11}\wp_{33}
+8\lambda_{{0}}\wp_{33} \wp_{23}
-2\wp_{12}^{2}\wp_{23}
-2\lambda_{{3}}\lambda_{{1}}
-4\wp_{12} \Delta
-4\wp_{11}^{2}\\
& & & \quad \textstyle -8\lambda_{{1}}\wp_{33} \wp_{13}
-4\lambda_{{2}}\wp_{33} \wp_{12}
+2\lambda_{{1}}\wp_{33} \wp_{22}
+2\lambda_{{3}}\wp_{13} \wp_{22}
+2\lambda_{{3}}  \wp_{13}^{2}
-2\lambda_{{1}}\wp_{12} \\
& & & \quad \textstyle +4\wp_{12} \wp_{22} \wp_{13}
+2\wp_{11} \wp_{22} \wp_{23}
-2\lambda_{{3}}\Delta
-4\lambda_{{2}}\wp_{11}
\\
&\bm{(-20)} &   \wp_{113} \wp_{123}  &= \textstyle 4\lambda_{{0}}\wp_{33}\wp_{23}
+2  \wp_{13}^{2}\wp_{12}
+2\wp_{11} \wp_{13} \wp_{23}
-2\lambda_{{1}}\wp_{33} \wp_{13}
+2\lambda_{{3}}  \wp_{13}^{2}
\\
&\bm{(-20)} &   \wp_{112} \wp_{223}  &= \textstyle
-8\lambda_{{0}}\wp_{33} \wp_{23}
+2\wp_{12}^{2}\wp_{23}
-2\lambda_{{3}}\lambda_{{1}}
-4\wp_{11}^{2}
+2\wp_{11}\wp_{22} \wp_{23}
+4\lambda_{{5}}\wp_{13} \wp_{11}\\
& & & \quad \textstyle
+4\lambda_{{1}}\wp_{33} \wp_{13}
+4\lambda_{{2}}\lambda_{{5}}\wp_{13}
-8\lambda_{{6}}\lambda_{{0}}\wp_{23}
+4\lambda_{{2}}\wp_{23} \wp_{13}
+2\lambda_{{3}}\wp_{13} \wp_{22}
-4\lambda_{{2}}\wp_{11}\\
& & & \quad \textstyle +4\lambda_{{6}}\lambda_{{1}}\wp_{13}
+4\lambda_{{0}}\wp_{22}
-2\lambda_{{1}}  \wp_{23}^{2}
-4\lambda_{{3}}  \wp_{13}^{2}
-4\lambda_{{1}}\wp_{12}
-8\lambda_{{0}}\wp_{13}
-2\lambda_{{5}}\lambda_{{1}}\wp_{23}
\\
&\bm{(-20)} &   \wp_{112} \wp_{133}  &= \textstyle
-8\lambda_{{0}}\wp_{33} \wp_{23}
+2\wp_{13}^{2}\wp_{12}
-2\wp_{12}\Delta
+2\wp_{11} \wp_{33} \wp_{12}
+4\lambda_{{1}}\wp_{33} \wp_{13}\\
& & & \quad \textstyle -8\lambda_{{6}}\lambda_{{0}}\wp_{23}
+4\lambda_{{6}}\lambda_{{1}}\wp_{13}
+4\lambda_{{0}}\wp_{22}
-2\lambda_{{1}}\wp_{12}
-8\lambda_{{0}}\wp_{13}
\\
&\bm{(-20)} & \wp_{111} \wp_{233}  &= \textstyle -2\lambda_{{3}}\wp_{11}\wp_{33}
-2\wp_{13}^{2}\wp_{12}
+2\wp_{12}\Delta
+4\wp_{11} \wp_{13} \wp_{23}
+2\wp_{11} \wp_{33} \wp_{12} \\
& & & \quad \textstyle +4\lambda_{{2}}\lambda_{{5}}\wp_{13}
+4\lambda_{{2}}\wp_{23} \wp_{13}
+4\lambda_{{2}}\wp_{33} \wp_{12}
-2\lambda_{{1}}\wp_{33} \wp_{22}
-2\lambda_{{5}}\lambda_{{1}}\wp_{23} \\
& & & \quad \textstyle +4\lambda_{{5}}\wp_{13} \wp_{11}
+4\lambda_{{1}}\wp_{33} \wp_{13}
-2\lambda_{{1}}\wp_{23}^{2}
-2\lambda_{{3}}\wp_{13}^{2}
+2\lambda_{{3}}\Delta
\\
&\bm{(-22)} &   \wp_{122}^{2} &= \textstyle -16\lambda_{{6}}\lambda_{{4}}\lambda_{{0}}
+4\wp_{12}^{2}\wp_{22}
+4\wp_{11} \Delta
+4{\lambda_{{5}}}^{2}\lambda_{{0}}
-8\lambda_{{3}}\lambda_{{0}}
-4\lambda_{{5}}\wp_{12} \wp_{11} \\
& & & \quad \textstyle -4\lambda_{{1}}\wp_{11}
+4\lambda_{{2}}\wp_{13}^{2}
-16\lambda_{{4}}\lambda_{{0}}\wp_{33}
-4\lambda_{{1}}\wp_{23} \wp_{13}
+8\lambda_{{5}}\lambda_{{0}}\wp_{23}\\
& & & \quad \textstyle -8\lambda_{{0}}\wp_{33} \wp_{22}
-8\lambda_{{0}}\wp_{12}
+4\lambda_{{3}}\wp_{13} \wp_{12}
-4\lambda_{{1}}\wp_{33} \wp_{12}
+4\lambda_{{6}}\wp_{11}^{2} \\
& & & \quad \textstyle +8\lambda_{{0}}\wp_{23}^{2}
+16\lambda_{{6}}\lambda_{{0}}\wp_{13}
+4\lambda_{{4}}\wp_{12}^{2}
-16\lambda_{{6}}\lambda_{{0}}\wp_{22}
\\
&\bm{(-22)} &  \wp_{113} \wp_{122}  &= \textstyle2\wp_{13}\wp_{12}^{2}
-4\lambda_{{3}}\lambda_{{0}}
+2\wp_{11} \wp_{13} \wp_{22}
-8\lambda_{{0}}\wp_{33} \wp_{13}
+4\lambda_{{0}}  \wp_{23}^{2}
-2\lambda_{{1}}\wp_{11}\\
& & & \quad \textstyle  +4\lambda_{{2}}  \wp_{13}^{2}
-4\lambda_{{1}}\wp_{23} \wp_{13}
+4\lambda_{{0}}\wp_{33} \wp_{22}
-4\lambda_{{0}}\wp_{12}
+2\lambda_{{3}}\wp_{13} \wp_{12}
-2\lambda_{{1}}\wp_{33} \wp_{12}
\\
\end{align*}
\begin{align*}
&\bm{(-22)} &   \wp_{113}^{2} &= \textstyle 4\wp_{11}\wp_{13}^{2}
+4\lambda_{{0}}\wp_{23}^{2}
+4\lambda_{{2}}\wp_{13}^{2}
-4\lambda_{{1}}\wp_{23} \wp_{13}
\\
&\bm{(-22)} &  \wp_{112} \wp_{222}  &= \textstyle 2\wp_{11}\wp_{22}^{2}
+2\wp_{12}^{2}\wp_{22}
-4\lambda_{{6}}\lambda_{{3}}\lambda_{{1}}
-8\wp_{11} \Delta-8\lambda_{{3}}\lambda_{{0}}
-8\lambda_{{2}}\wp_{11} \wp_{33}\\
& & & \quad \textstyle
-4\lambda_{{3}}\lambda_{{1}}\wp_{33}
-8\lambda_{{6}}\lambda_{{2}}\wp_{11}
+2\lambda_{{3}}\wp_{22} \wp_{12}
-2\lambda_{{5}}\lambda_{{1}}\wp_{22}
+8\lambda_{{5}}\lambda_{{1}}\wp_{13} \\
& & & \quad \textstyle -8\lambda_{{0}}\wp_{23}^{2}
-4\lambda_{{1}}\wp_{11}
+8\lambda_{{1}}\wp_{23} \wp_{13}
-8\lambda_{{5}}\lambda_{{0}}\wp_{23}
-8\lambda_{{0}}\wp_{12}
-4\lambda_{{1}}\wp_{33} \wp_{12}\\
& & & \quad \textstyle +4\lambda_{{2}}\wp_{12} \wp_{23}
+4\lambda_{{5}}\wp_{12} \wp_{11}
-2\lambda_{{1}}\wp_{23} \wp_{22}
-4\lambda_{{6}}\lambda_{{1}}\wp_{12}
-8\lambda_{{6}}\wp_{11}^{2}
+4\lambda_{{2}}\lambda_{{5}}\wp_{12}
\\
&\bm{(-22)} &  \wp_{112} \wp_{123}  &= \textstyle 2\wp_{13}\wp_{12}^{2}
-4\lambda_{{3}}\lambda_{{0}}
+2\wp_{11} \wp_{23} \wp_{12}
+2\lambda_{{5}}\lambda_{{1}}\wp_{13}
-4\lambda_{{0}}\wp_{23}^{2}\\
& & & \quad \textstyle -2\lambda_{{1}}\wp_{11}
+2\lambda_{{1}}\wp_{23}\wp_{13}
-4\lambda_{{5}}\lambda_{{0}}\wp_{23}
-4\lambda_{{0}}\wp_{12}
+2\lambda_{{3}}\wp_{13} \wp_{12}
\\
&\bm{(-22)} &  \wp_{111} \wp_{223}  &= \textstyle -2\wp_{13}\wp_{12}^{2}
+4\wp_{11}\Delta
+2\wp_{11}\wp_{13}\wp_{22}
+4\wp_{11}\wp_{23} \wp_{12}
-4\lambda_{{2}}\wp_{11} \wp_{33} -2{\lambda_{{3}}}^{2}\wp_{13} \\
& & & \quad \textstyle +2\lambda_{{3}}\lambda_{{1}}\wp_{33}
+8\lambda_{{4}}\wp_{13} \wp_{11}
-2\lambda_{{3}}\wp_{23} \wp_{11}
-4\lambda_{{1}}\lambda_{{4}}\wp_{23}
+2\lambda_{{5}}\lambda_{{1}}\wp_{13} \\
& & & \quad \textstyle +6\lambda_{{1}}\wp_{23} \wp_{13}
-4\lambda_{{5}}\lambda_{{0}}\wp_{23}
-4\lambda_{{0}}\wp_{33} \wp_{22}
-4\lambda_{{3}}\wp_{13} \wp_{12}
+4\lambda_{{1}}\wp_{33} \wp_{12} \\
& & & \quad \textstyle +8\lambda_{{2}}\lambda_{{4}}\wp_{13}
+8\lambda_{{2}}\Delta -4\lambda_{{1}}\wp_{23} \wp_{22}
+8\lambda_{{0}}\wp_{33} \wp_{13} +8\lambda_{{2}}\wp_{12} \wp_{23}
-4\lambda_{{0}}\wp_{23}^{2}
\\
&\bm{(-22)} &  \wp_{111} \wp_{133}  &= \textstyle 2\wp_{11}^{2}\wp_{33}
-2\wp_{11}\Delta
+8\lambda_{{0}}\wp_{33} \wp_{13}
+2\wp_{11}\wp_{13}^{2}
+2\lambda_{{5}}\lambda_{{1}}\wp_{13}\\
& & & \quad \textstyle -4\lambda_{{0}}  \wp_{23}^{2}
+2\lambda_{{1}}\wp_{23} \wp_{13}
-4\lambda_{{5}}\lambda_{{0}}\wp_{23}
-4\lambda_{{0}}\wp_{33} \wp_{22}
+2\lambda_{{1}}\wp_{33} \wp_{12}
\\
&\bm{(-24)} & \wp_{112} \wp_{122}  &= \textstyle 2\wp_{11} \wp_{12} \wp_{22}
+2{\lambda_{{1}}}^{2}
-8\lambda_{{2}}\lambda_{{0}}
-8\lambda_{{6}}\lambda_{{3}}\lambda_{{0}}
+2\wp_{12}^{3} -8\lambda_{{3}}\lambda_{{0}}\wp_{33} \\
& & & \quad \textstyle
+2\lambda_{{5}}\lambda_{{1}}\wp_{12}
+4\lambda_{{1}}\wp_{13}^{2}
-8\lambda_{{6}}\lambda_{{0}}\wp_{12}
+4\lambda_{{2}}\wp_{13} \wp_{12}
+8\lambda_{{5}}\lambda_{{0}}\wp_{13}
-4\lambda_{{1}}\wp_{11} \wp_{33}\\
& & & \quad \textstyle -2\lambda_{{1}}\wp_{13} \wp_{22}
-8\lambda_{{0}}\wp_{33} \wp_{12}
-4\lambda_{{6}}\lambda_{{1}}\wp_{11}
-8\lambda_{{0}}\wp_{11}
-4\lambda_{{5}}\lambda_{{0}}\wp_{22}
+2\lambda_{{3}}\wp_{12}^{2}
\\
&\bm{(-24)} & \wp_{112} \wp_{113}  &= \textstyle 4\wp_{11} \wp_{12} \wp_{13}
+2{\lambda_{{1}}}^{2}
-8\lambda_{{2}}\lambda_{{0}}
+4\lambda_{{0}}\wp_{23} \wp_{22}
+2\lambda_{{1}}\Delta\\
& & & \quad \textstyle -8\lambda_{{0}}\wp_{23} \wp_{13}
+6\lambda_{{1}}\wp_{13}^{2}
+4\lambda_{{2}}\wp_{13} \wp_{12}
-2\lambda_{{1}}\wp_{11} \wp_{33}
-4\lambda_{{1}}\wp_{13} \wp_{22}
-8\lambda_{{0}}\wp_{11}
\\
&\bm{(-24)} & \wp_{111}\wp_{222}  &= \textstyle 18\lambda_{{1}}\wp_{13} \wp_{22}
+8\lambda_{{3}}\wp_{13}\wp_{11}
+2\lambda_{{1}}\wp_{11}\wp_{33}
+6\wp_{11}\wp_{12}\wp_{22}
-16\lambda_{{4}}\lambda_{{0}}\wp_{23} \\
& & & \quad \textstyle +8\lambda_{{4}}\wp_{12} \wp_{11}
+8\lambda_{{3}}\lambda_{{0}}\wp_{33}
-14\lambda_{{1}}  \wp_{13}^{2}
-4\lambda_{{3}}  \wp_{12}^{2}
+2\lambda_{{1}}\Delta
-4\lambda_{{5}} \wp_{11}^{2} \\
& & & \quad \textstyle -16\lambda_{{0}}\wp_{23}\wp_{22}
+16\lambda_{{0}}\wp_{23}\wp_{13}
+16\lambda_{{1}}\lambda_{{4}}\wp_{13}
+8\lambda_{{0}}\wp_{33}\wp_{12}
-2\lambda_{{5}}\lambda_{{1}}\wp_{12} \\
& & & \quad \textstyle -2\lambda_{{3}}\lambda_{{1}}\wp_{23}
-4\lambda_{{2}}\lambda_{{5}}\wp_{11}
-4\lambda_{{2}}\wp_{23}\wp_{11}
+8\lambda_{{2}}\lambda_{{4}}\wp_{12}
-2\lambda_{{3}}\wp_{22}\wp_{11}
-2{\lambda_{{3}}}^{2}\wp_{12} \\
& & & \quad \textstyle
-4\lambda_{{1}}\lambda_{{4}}\wp_{22}
+8\lambda_{{2}}\wp_{22}\wp_{12}
-4\lambda_{{1}}\wp_{22}^{2}
-8\lambda_{{2}}\wp_{13}\wp_{12}
-2\lambda_{{5}}\lambda_{{3}}\lambda_{{1}}
-2\wp_{12}^{3}
\\
&\bm{(-24)} & \wp_{111} \wp_{123}  &= \textstyle 2\wp_{11}^{2}\wp_{23}
+2\wp_{11}\wp_{12} \wp_{13}
-8\lambda_{{0}}\wp_{23} \wp_{22}
+8\lambda_{{0}}\wp_{23} \wp_{13}
+4\lambda_{{1}}\lambda_{{4}}\wp_{13} \\
& & & \quad \textstyle +4\lambda_{{3}}\lambda_{{0}}\wp_{33}
-4\lambda_{{1}}\wp_{13}^{2}
+2\lambda_{{1}}\wp_{11} \wp_{33}
+4\lambda_{{1}}\wp_{13} \wp_{22}
-8\lambda_{{4}}\lambda_{{0}}\wp_{23} \\
& & & \quad \textstyle +2\lambda_{{3}}\wp_{13} \wp_{11}
+4\lambda_{{0}}\wp_{33} \wp_{12}
\\
&\bm{(-26)} &   \wp_{112}^{2} &= \textstyle -16\lambda_{{0}}\Delta
+4\lambda_{{6}}{\lambda_{{1}}}^{2}
-16\lambda_{{6}}\lambda_{{2}}\lambda_{{0}}
+4\wp_{11}\wp_{12}^{2}
+8\lambda_{{1}}\wp_{13} \wp_{12}
-16\lambda_{{6}}\lambda_{{0}}\wp_{11}\\
& & & \quad \textstyle -16\lambda_{{0}}\wp_{12} \wp_{23}
-16\lambda_{{2}}\lambda_{{0}}\wp_{33}
+4\lambda_{{0}}\wp_{22}^{2}
-4\lambda_{{1}}\wp_{22} \wp_{12}
+4\lambda_{{2}}\wp_{12}^{2}
+4{\lambda_{{1}}}^{2}\wp_{33}
\\
&\bm{(-26)} & \wp_{111} \wp_{122}  &= \textstyle 2\wp_{11}^{2}\wp_{22}
-4\lambda_{{5}}\lambda_{{3}}\lambda_{{0}}
+2\wp_{11}\wp_{12}^{2}
-8\lambda_{{3}}\lambda_{{0}}\wp_{23}
-4\lambda_{{1}}\wp_{23} \wp_{11}
+4\lambda_{{2}}\wp_{13} \wp_{11} \\
& & & \quad \textstyle
+8\lambda_{{0}}\wp_{11}\wp_{33}
-2\lambda_{{1}}\wp_{13}\wp_{12}
+4\lambda_{{1}}\lambda_{{4}}\wp_{12}
-2\lambda_{{5}}\lambda_{{1}}\wp_{11}
-4\lambda_{{5}}\lambda_{{0}}\wp_{12} \\
& & & \quad \textstyle -8\lambda_{{4}}\lambda_{{0}}\wp_{22}
-8\lambda_{{0}}\wp_{12} \wp_{23}
+8\lambda_{{2}}\lambda_{{0}}\wp_{33}
-8\lambda_{{0}}\wp_{22}^{2}
+4\lambda_{{1}}\wp_{22} \wp_{12}
-2{\lambda_{{1}}}^{2}\wp_{33} \\
& & & \quad \textstyle +24\lambda_{{0}}\wp_{13}\wp_{22}
+16\lambda_{{4}}\lambda_{{0}}\wp_{13}
+2\lambda_{{3}}\lambda_{{1}}\wp_{13}
+2\lambda_{{3}}\wp_{12} \wp_{11}
-16\lambda_{{0}}\wp_{13}^{2}
\\
&\bm{(-26)} & \wp_{111} \wp_{113}  &= \textstyle 4\wp_{11}^{2}\wp_{13}
-4\lambda_{{3}}\lambda_{{0}}\wp_{23}
-2\lambda_{{1}}\wp_{23}\wp_{11}
+4\lambda_{{2}}\wp_{13}\wp_{11}
+8\lambda_{{0}}\wp_{11}\wp_{33}\\
& & & \quad \textstyle +2\lambda_{{1}}\wp_{13}\wp_{12}
+2\lambda_{{3}}\lambda_{{1}}\wp_{13}
-4\lambda_{{0}}\wp_{12} \wp_{23}
+8\lambda_{{2}}\lambda_{{0}}\wp_{33}
-2{\lambda_{{1}}}^{2}\wp_{33}
\\
&\bm{(-28)} & \wp_{111} \wp_{112}  &= \textstyle 2\lambda_{{5}}{\lambda_{{1}}}^{2}
-8\lambda_{{2}}\lambda_{{5}}\lambda_{{0}}
+4\wp_{11}^{2}\wp_{12}
-8\lambda_{{0}}\wp_{23}\wp_{11}
+2\lambda_{{3}}\lambda_{{1}}\wp_{12}
-4\lambda_{{0}}\wp_{22} \wp_{12}\\
& & & \quad \textstyle -8\lambda_{{2}}\lambda_{{0}}\wp_{23}
+8\lambda_{{0}}\wp_{13}\wp_{12}
+4\lambda_{{1}}\wp_{13}\wp_{11}
-4\lambda_{{3}}\lambda_{{0}}\wp_{22}
+4\lambda_{{2}}\wp_{12}\wp_{11} \\
& & & \quad \textstyle -2\lambda_{{1}}\wp_{22}\wp_{11}
+8\lambda_{{3}}\lambda_{{0}}\wp_{13}
-8\lambda_{{5}}\lambda_{{0}}\wp_{11}
+2\lambda_{{1}}\wp_{12}^{2}
+2{\lambda_{{1}}}^{2}\wp_{23}
\\
&\bm{(-30)} &   \wp_{111}^{2} &= \textstyle 4\wp_{11}^{3}
+4{\lambda_{{1}}}^{2}\lambda_{{4}}
+4{\lambda_{{3}}}^{2}\lambda_{{0}}
-16\lambda_{{2}}\lambda_{{4}}\lambda_{{0}}
+8\lambda_{{3}}\lambda_{{0}}\wp_{12}
+4\lambda_{{0}}\wp_{12}^{2}
+4\lambda_{{2}}\wp_{11}^{2} \\
& & & \quad \textstyle
-16\lambda_{{2}}\lambda_{{0}}\wp_{22}
+16\lambda_{{0}}\wp_{13} \wp_{11}
-16\lambda_{{0}}\wp_{22} \wp_{11}
+4\lambda_{{1}}\wp_{12} \wp_{11}
+16\lambda_{{2}}\lambda_{{0}}\wp_{13} \\
& & & \quad \textstyle
+4\lambda_{{3}}\lambda_{{1}}\wp_{11}
-16\lambda_{{4}}\lambda_{{0}}\wp_{11}
-4{\lambda_{{1}}}^{2}\wp_{13}
+4{\lambda_{{1}}}^{2}\wp_{22}
\end{align*}

\section{Quadratic 3-index relations associated with the cyclic trigonal curve of genus three} \label{APP_34quad}

This appendix contains the complete set of quadratic 3-index relations associated to the cyclic $(3,4)$-curve.
Note that $Q_{1333}$ was the sole $Q$-function used in the basis of fundamental Abelian functions, (\ref{eq:34_2pole}).  It appears here as a linear term, or multiplied by a single 2-index $\wp$-function.  The relations are presented in decreasing weight order as indicated by the number in brackets.

\begin{align*}
&\bm{(-6)} &  \wp_{333}^2 &= \textstyle 4\wp_{33}^3 + 4\wp_{13} + \wp_{23}^2 - 4\wp_{22}\wp_{33}
\\
&\bm{(-7)} &  \wp_{233}\wp_{333} &= \textstyle 4\wp_{23}\wp_{33}^2
- 2\wp_{12} - \wp_{22}\wp_{23} + 2\lambda_{3}\wp_{33}^2
\\
&\bm{(-8)} &  \wp_{233}^2 &= \textstyle 4\wp_{23}^2\wp_{33} + \wp_{22}^2
+ 4\lambda_{3}\wp_{23}\wp_{33} + 4\wp_{33}\lambda_{2} - \frac{4}{3}Q_{1333}
\\
&\bm{(-8)} &  \wp_{223}\wp_{333} &= \textstyle 2\wp_{23}^2\wp_{33}
+ 2\wp_{22}\wp_{33}^2 - 2\wp_{22}^2 + \lambda_{3}\wp_{23}\wp_{33}
+ 4\wp_{13}\wp_{33} + \frac{2}{3}Q_{1333}
\\
&\bm{(-9)} &  \wp_{223}\wp_{233} &= \textstyle 2\wp_{13}\lambda_{3} + 2\lambda_{1}
+ 4\wp_{13}\wp_{23} + 2\wp_{23}\wp_{22}\wp_{33} + 2\wp_{23}^3
\\& & &\quad \textstyle
+ \wp_{22}\wp_{33}\lambda_{3} + 2\wp_{23}^2\lambda_{3} + 2\wp_{23}\lambda_{2}
\\
&\bm{(-9)} &  \wp_{222}\wp_{333} &= \textstyle 6\wp_{23}\wp_{22}\wp_{33}
- 4\wp_{12}\wp_{33} - 2\wp_{23}^3 + 4\wp_{22}\wp_{33}\lambda_{3}
- 8\wp_{13}\wp_{23}
- \wp_{23}^2\lambda_{3} - 4\wp_{13}\lambda_{3}
\\
&\bm{(-10)} &  \wp_{223}^2 &= \textstyle 4\wp_{23}^2\wp_{22} + 4\wp_{11} - 4\wp_{12}\wp_{23}
+ 4\wp_{13}\wp_{22} - 4\wp_{12}\lambda_{3} \\
& & &\quad \textstyle + 4\lambda_{3}\wp_{22}\wp_{23} + \lambda_{3}^2\wp_{33}^2
+ 4\lambda_{2}\wp_{22} - 4\lambda_{2}\wp_{33}^2 + \frac{4}{3}\wp_{33}Q_{1333}
\\
&\bm{(-10)} &  \wp_{222}\wp_{233} &= \textstyle 2\wp_{23}^2\wp_{22} + 2\wp_{22}^2\wp_{33}
+ 4\wp_{12}\wp_{23} + 2\wp_{12}\lambda_{3} + \lambda_{3}\wp_{22}\wp_{23} \\
& & &\quad \textstyle - 2\lambda_{3}^2\wp_{33}^2 + 8\lambda_{2}\wp_{33}^2 - \frac{8}{3}\wp_{33}Q_{1333}
\\
&\bm{(-10)} &  \wp_{133}\wp_{333} &= \textstyle \wp_{12}\wp_{23} - 2\wp_{13}\wp_{22}
+ 4\wp_{13}\wp_{33}^2 + \frac{2}{3}\wp_{33}Q_{1333}
\\
&\bm{(-11)} &  \wp_{222}\wp_{223} &= \textstyle 2\wp_{22}^2\lambda_{3}
- \lambda_{3}^2\wp_{23}\wp_{33} + 4\wp_{22}^2\wp_{23} + 4\lambda_{2}\wp_{23}\wp_{33}
+ 4\wp_{33}\lambda_{1}
\\& & &\quad \textstyle
- \frac{2}{3}\lambda_{3}Q_{1333} - \frac{4}{3}\wp_{23}Q_{1333}
\\
&\bm{(-11)} &  \wp_{133}\wp_{233} &= \textstyle 2\lambda_{3}\wp_{13}\wp_{33}
+ 2\wp_{33}\lambda_{1} + \wp_{12}\wp_{22} + 4\wp_{33}\wp_{23}\wp_{13} + \frac{2}{3}\wp_{23}Q_{1333}
\\
&\bm{(-11)} &  \wp_{123}\wp_{333} &= \textstyle 2\wp_{12}\wp_{33}^2 - 2\wp_{12}\wp_{22}
+ 2\wp_{33}\wp_{23}\wp_{13} + 2\lambda_{3}\wp_{13}\wp_{33} - \frac{1}{3}\wp_{23}Q_{1333}
\\
&\bm{(-12)} &
\wp_{222}^2 &= \textstyle 4\wp_{22}^3 + 8\wp_{11}\wp_{33} - 8\wp_{13}^2
- 4\wp_{33}\wp_{12}\lambda_{3} + 4\wp_{23}\wp_{13}\lambda_{3} + 4\wp_{13}\lambda_{3}^2 \\
& & &\quad \textstyle - 4\wp_{22}\wp_{33}\lambda_{3}^2 + \wp_{23}^2\lambda_{3}^2
- 8\wp_{13}\lambda_{2} + 16\wp_{33}\lambda_{2}\wp_{22} - 4\wp_{23}^2\lambda_{2} \\
& & &\quad \textstyle
- 8\lambda_{0} - 4\wp_{22}Q_{1333} - 4\wp_{23}\lambda_{1}
\\
&\bm{(-12)} &
\wp_{133}\wp_{223} &= \textstyle 2\wp_{13}\wp_{33}\wp_{22} + 2\wp_{13}\wp_{23}^2 + 2\wp_{11}\wp_{33}
+ 2\wp_{13}^2 - \wp_{33}\wp_{12}\lambda_{3} \\
& & &\quad \textstyle
+ 2\wp_{13}\lambda_{2} + 2\wp_{23}\wp_{13}\lambda_{3}
+ 2\lambda_{0} + \frac{2}{3}\wp_{22}Q_{1333}
\\
&\bm{(-12)} &
\wp_{123}\wp_{233} &= \textstyle 2\wp_{13}\lambda_{2} + 2\wp_{13}^2 + 2\wp_{33}\wp_{12}\lambda_{3}
+ 2\wp_{23}\wp_{33}\wp_{12} + 2\wp_{13}\wp_{23}^2 \\
& & &\quad \textstyle - 2\wp_{11}\wp_{33} + 2\wp_{23}\wp_{13}\lambda_{3}
+ 2\lambda_{0} - \frac{1}{3}\wp_{22}Q_{1333}
\\
&\bm{(-12)} &
\wp_{122}\wp_{333} &= \textstyle 2\wp_{13}\wp_{33}\wp_{22} - 2\wp_{13}\wp_{23}^2
+ 4\wp_{23}\wp_{33}\wp_{12} - 2\wp_{11}\wp_{33} - 6\wp_{13}^2 + 2\lambda_{0} \\
& & &\quad \textstyle + 2\wp_{33}\wp_{12}\lambda_{3} - \wp_{23}\wp_{13}\lambda_{3}
- 2\wp_{13}\lambda_{2} + \wp_{23}\lambda_{1} + \frac{2}{3}\wp_{22}Q_{1333}
\\
&\bm{(-13)} &
\wp_{133}\wp_{222} &= \textstyle  - 2\wp_{12}\wp_{23}^2 + 4\wp_{23}\wp_{13}\wp_{22}
- \wp_{23}\wp_{12}\lambda_{3} + 2\wp_{22}\wp_{13}\lambda_{3}  \\
& & &\quad \textstyle + 2\wp_{33}\wp_{12}\wp_{22} + 4\wp_{33}^2\lambda_{1}
- \frac{2}{3}\wp_{33}\lambda_{3}Q_{1333}
\\
&\bm{(-13)} &
\wp_{123}\wp_{223} &= \textstyle 2\wp_{22}\lambda_{1} + 2\wp_{22}\wp_{13}\lambda_{3}
- 2\wp_{33}^2\lambda_{1} + 2\wp_{12}\wp_{23}^2 + 2\wp_{23}\wp_{13}\wp_{22} \\
& & &\quad \textstyle - 2\wp_{11}\wp_{23} + 2\wp_{23}\wp_{12}\lambda_{3}
+ \frac{1}{3}\wp_{33}\lambda_{3}Q_{1333}
\\
&\bm{(-13)} &
\wp_{122}\wp_{233} &= \textstyle  - \wp_{22}\wp_{13}\lambda_{3} + 4\wp_{12}\wp_{13}
+ 2\wp_{12}\wp_{23}^2 + 2\wp_{12}\lambda_{2} - \wp_{22}\lambda_{1} \\
& & &\quad \textstyle + 2\wp_{23}\wp_{12}\lambda_{3} + 4\wp_{33}^2\lambda_{1}
+ 2\wp_{33}\wp_{12}\wp_{22} - \frac{2}{3}\wp_{33}\lambda_{3}Q_{1333}
\\
&\bm{(-14)} &
\wp_{133}^2 &= \textstyle \wp_{12}^2 + 4\wp_{33}\lambda_{0} + 4\wp_{33}\wp_{13}^2
+ \frac{4}{3}\wp_{13}Q_{1333}
\\
\end{align*}
\begin{align*}
&\bm{(-14)} &
\wp_{123}\wp_{222} &= \textstyle 2\wp_{22}\wp_{23}\wp_{12} - 2\wp_{12}^2
+ 2\wp_{13}\wp_{22}^2 - 2\wp_{23}\wp_{33}\lambda_{1} - 2\lambda_{3}^2\wp_{13}\wp_{33} \\
& & &\quad \textstyle + 2\wp_{22}\wp_{12}\lambda_{3} + 8\wp_{33}\wp_{13}\lambda_{2}
- \frac{8}{3}\wp_{13}Q_{1333} + \frac{1}{3}\wp_{23}\lambda_{3}Q_{1333}
\\
&\bm{(-14)} &
\wp_{122}\wp_{223} &= \textstyle 4\wp_{22}\wp_{23}\wp_{12} - 2\wp_{11}\wp_{22}
+ 2\wp_{12}^2 + 2\wp_{22}\wp_{12}\lambda_{3} + \lambda_{3}^2\wp_{13}\wp_{33} \\
& & &\quad \textstyle + 4\wp_{23}\wp_{33}\lambda_{1} + \wp_{33}\lambda_{3}\lambda_{1}
+ 8\wp_{33}\lambda_{0} + \frac{4}{3}\wp_{13}Q_{1333} \\
& & &\quad \textstyle - \frac{2}{3}\wp_{23}\lambda_{3}Q_{1333}
- 4\wp_{33}\wp_{13}\lambda_{2} - \frac{2}{3}\lambda_{2}Q_{1333}
\\
&\bm{(-14)} &
\wp_{113}\wp_{333} &= \textstyle 2\wp_{33}\wp_{13}^2 + 2\wp_{33}^2\wp_{11}
- 2\wp_{12}^2 + 2\wp_{33}\wp_{13}\lambda_{2} - \wp_{23}\wp_{33}\lambda_{1}
\\& & &\quad \textstyle
- 2\wp_{33}\lambda_{0} - \frac{2}{3}\wp_{13}Q_{1333}
\\
&\bm{(-15)} &
\wp_{123}\wp_{133} &= \textstyle 2\wp_{23}\wp_{13}^2 + 2\wp_{13}\lambda_{1}
+ 2\wp_{13}^2\lambda_{3} - 2\wp_{23}\lambda_{0} + 2\wp_{33}\wp_{12}\wp_{13}
+ \frac{1}{3}\wp_{12}Q_{1333}
\\
&\bm{(-15)} &
\wp_{122}\wp_{222} &= \textstyle 6\wp_{33}\wp_{22}\lambda_{1} - \wp_{23}\lambda_{3}\lambda_{1}
- 2\wp_{33}\wp_{12}\lambda_{3}^2 - 2\wp_{23}^2\lambda_{1} + 2\lambda_{3}\wp_{13}\lambda_{2} \\
& & &\quad \textstyle + 2\lambda_{3}\wp_{11}\wp_{33} + \wp_{23}\wp_{13}\lambda_{3}^2
- 2\wp_{13}^2\lambda_{3}  - \frac{2}{3}\wp_{22}\lambda_{3}Q_{1333} \\
& & &\quad \textstyle + 4\wp_{33}\wp_{12}\lambda_{2} - 2\lambda_{3}\lambda_{0}
+ 4\wp_{22}^2\wp_{12} - \frac{4}{3}\wp_{12}Q_{1333}  - 8\wp_{13}\lambda_{1}
\\
&\bm{(-15)} &
\wp_{113}\wp_{233} &= \textstyle 2\wp_{13}^2\lambda_{3} + 2\wp_{23}\wp_{13}^2
+ 2\wp_{33}\wp_{12}\lambda_{2} + 2\wp_{13}\lambda_{1} + 2\wp_{33}\wp_{23}\wp_{11} \\
& & &\quad \textstyle - 2\wp_{23}\lambda_{0} - \wp_{33}\wp_{22}\lambda_{1}
- \frac{2}{3}\wp_{12}Q_{1333}
\\
&\bm{(-15)} &
\wp_{112}\wp_{333} &= \textstyle 2\wp_{23}\wp_{13}\lambda_{2} - \wp_{23}^2\lambda_{1}
- 4\wp_{13}^2\lambda_{3} + 4\wp_{33}\wp_{12}\wp_{13} - 2\wp_{23}\lambda_{0} \\
& & &\quad \textstyle + 2\wp_{33}\wp_{23}\wp_{11} - 2\wp_{23}\wp_{13}^2 + \frac{4}{3}\wp_{12}Q_{1333}
\\
&\bm{(-16)} &
\wp_{123}^2 &= \textstyle \frac{1}{3}\wp_{23}\wp_{12}\lambda_{2}
- \frac{8}{3}\wp_{11}\wp_{13} + \frac{10}{3}\wp_{13}\wp_{23}\wp_{12}
- \frac{1}{3}\wp_{33}\wp_{11}\wp_{22}  + \frac{2}{9}\wp_{33}\lambda_{2}Q_{1333} \\
& & &\quad \textstyle - \frac{1}{3}\wp_{23}\wp_{22}\lambda_{1}
+ \frac{1}{3}\wp_{22}\wp_{13}\lambda_{2} + 3\wp_{22}\lambda_{0} + \frac{1}{3}\wp_{12}\lambda_{1}
+ \frac{1}{3}\wp_{23}^2\wp_{11} \\
& & &\quad \textstyle + \frac{1}{3}\wp_{13}^2\wp_{22} - \frac{8}{3}\wp_{33}^2\lambda_{0}
+ \frac{1}{3}\wp_{33}\wp_{12}^2 + 3\wp_{12}\wp_{13}\lambda_{3}
- \frac{1}{3}\wp_{33}^2\lambda_{3}\lambda_{1}
\\
&\bm{(-16)} &
\wp_{122}\wp_{133} &= \textstyle  - \frac{8}{3}\wp_{11}\wp_{13}
+ \frac{4}{3}\wp_{13}\wp_{23}\wp_{12} + \frac{2}{3}\wp_{33}\wp_{11}\wp_{22}
- \frac{2}{3}\wp_{23}^2\wp_{11} \\
& & &\quad \textstyle + \frac{4}{3}\wp_{13}^2\wp_{22} + \frac{4}{3}\wp_{33}\wp_{12}^2
+ \frac{2}{3}\wp_{23}\wp_{22}\lambda_{1} - \frac{2}{3}\wp_{22}\wp_{13}\lambda_{2}
- \frac{2}{3}\wp_{23}\wp_{12}\lambda_{2} \\
& & &\quad \textstyle + 3\wp_{12}\wp_{13}\lambda_{3} + \frac{2}{3}\wp_{33}^2\lambda_{3}\lambda_{1}
+ \frac{1}{3}\wp_{12}\lambda_{1} + \frac{16}{3}\wp_{33}^2\lambda_{0}
- \frac{4}{9}\wp_{33}\lambda_{2}Q_{1333}
\\
&\bm{(-16)} &
\wp_{113}\wp_{223} &= \textstyle \frac{4}{3}\wp_{11}\wp_{13} + \frac{4}{3}\wp_{13}\wp_{23}\wp_{12}
+ \frac{2}{3}\wp_{33}\wp_{11}\wp_{22} + \frac{4}{3}\wp_{23}^2\wp_{11}
+ \frac{4}{3}\wp_{23}\wp_{12}\lambda_{2} \\
& & &\quad \textstyle + \frac{4}{3}\wp_{13}^2\wp_{22} - \frac{2}{3}\wp_{33}\wp_{12}^2
- \frac{4}{3}\wp_{23}\wp_{22}\lambda_{1} + \frac{4}{3}\wp_{22}\wp_{13}\lambda_{2}  \\
& & &\quad \textstyle - \frac{1}{3}\wp_{33}^2\lambda_{3}\lambda_{1}
+ \frac{4}{3}\wp_{12}\lambda_{1} - \frac{8}{3}\wp_{33}^2\lambda_{0}
+ \frac{2}{9}\wp_{33}\lambda_{2}Q_{1333}
\\
&\bm{(-16)} &
\wp_{112}\wp_{233} &= \textstyle \frac{16}{3}\wp_{11}\wp_{13}
+ \frac{4}{3}\wp_{13}\wp_{23}\wp_{12} + \frac{2}{3}\wp_{33}\wp_{11}\wp_{22}
+ \frac{4}{3}\wp_{23}^2\wp_{11} - \frac{4}{9}\wp_{33}\lambda_{2}Q_{1333}  \\
& & &\quad \textstyle - \frac{2}{3}\wp_{13}^2\wp_{22} + \frac{4}{3}\wp_{33}\wp_{12}^2
- \frac{1}{3}\wp_{23}\wp_{22}\lambda_{1}
- \frac{2}{3}\wp_{22}\wp_{13}\lambda_{2} - 6\wp_{22}\lambda_{0}  \\
& & &\quad \textstyle + \frac{4}{3}\wp_{23}\wp_{12}\lambda_{2}
+ \frac{2}{3}\wp_{33}^2\lambda_{3}\lambda_{1} + \frac{10}{3}\wp_{12}\lambda_{1}
+ \frac{16}{3}\wp_{33}^2\lambda_{0}
\\
&\bm{(-17)} &
\wp_{122}\wp_{123} &= \textstyle 2\wp_{33}\lambda_{3}\lambda_{0}
+ 2\wp_{12}\wp_{13}\wp_{22} - 2\wp_{11}\wp_{12} + 2\lambda_{1}\wp_{13}\wp_{33}
+ 2\lambda_{3}\wp_{12}^2 \\
& & &\quad \textstyle + 2\wp_{23}\wp_{12}^2 - \frac{1}{3}\lambda_{1}Q_{1333}
- \frac{1}{3}\wp_{13}\lambda_{3}Q_{1333}
\\
&\bm{(-17)} &
\wp_{113}\wp_{222} &= \textstyle  - 8\wp_{23}\wp_{33}\lambda_{0}
- 4\wp_{33}\lambda_{3}\lambda_{0} + 8\lambda_{1}\wp_{13}\wp_{33} + 4\wp_{22}\wp_{12}\lambda_{2}
+ \frac{2}{3}\lambda_{1}Q_{1333}  \\
& & &\quad \textstyle + 4\wp_{12}\wp_{13}\wp_{22} + 4\wp_{11}\wp_{12}
- \lambda_{3}\wp_{23}\wp_{33}\lambda_{1} - 2\wp_{23}\wp_{12}^2 + 2\wp_{22}\wp_{23}\wp_{11} \\
& & &\quad \textstyle - 4\lambda_{3}\wp_{12}^2 - 2\wp_{22}^2\lambda_{1}
+ \frac{2}{3}\wp_{23}\lambda_{2}Q_{1333}
- \frac{4}{3}\wp_{13}\lambda_{3}Q_{1333}
\\
&\bm{(-17)} &
\wp_{112}\wp_{223} &= \textstyle 8\wp_{33}\lambda_{3}\lambda_{0}
+ \lambda_{3}\wp_{23}\wp_{33}\lambda_{1} - 4\lambda_{1}\wp_{13}\wp_{33}
+ 2\wp_{23}\wp_{12}^2  + \frac{2}{3}\wp_{13}\lambda_{3}Q_{1333} \\
& & &\quad \textstyle + 2\wp_{22}\wp_{23}\wp_{11} + 8\wp_{23}\wp_{33}\lambda_{0} + 2\lambda_{3}\wp_{12}^2
- \frac{2}{3}\wp_{23}\lambda_{2}Q_{1333} - \frac{4}{3}\lambda_{1}Q_{1333}
\\
&\bm{(-18)} &
\wp_{122}^2 &= \textstyle  - 4\lambda_{2}\wp_{13}^2 - 4\lambda_{2}\lambda_{0}
+ \lambda_{1}^2 - 16\lambda_{0}\wp_{13} - 8\lambda_{0}\wp_{23}^2 + 4\wp_{22}\wp_{12}^2 \\
& & &\quad \textstyle + 2\lambda_{3}\wp_{13}\lambda_{1} + \wp_{13}^2\lambda_{3}^2
+ 4\lambda_{1}\wp_{13}\wp_{23} + 4\lambda_{1}\wp_{12}\wp_{33}  \\
& & &\quad \textstyle + 8\wp_{33}\wp_{22}\lambda_{0} - \frac{4}{3}\wp_{12}\lambda_{3}Q_{1333}
+ \frac{4}{3}\wp_{11}Q_{1333} - 4\lambda_{3}\wp_{23}\lambda_{0}
\\
&\bm{(-18)} &
\wp_{111}\wp_{333} &= \textstyle  - 9\lambda_{3}\wp_{23}\lambda_{0}
- 2\wp_{13}\lambda_{2}^2 + \lambda_{2}\wp_{23}\lambda_{1} - 2\lambda_{2}\wp_{11}\wp_{33}
+ 2\wp_{11}Q_{1333} \\
& & &\quad \textstyle - 10\lambda_{0}\wp_{23}^2 + 6\wp_{33}\wp_{13}\wp_{11}
- 10\lambda_{0}\wp_{13} - 2\wp_{13}^3 + 7\lambda_{1}\wp_{13}\wp_{23} \\
& & &\quad \textstyle + 2\lambda_{1}\wp_{12}\wp_{33} - 2\wp_{33}\wp_{22}\lambda_{0}
+ 6\lambda_{3}\wp_{13}\lambda_{1} + 2\lambda_{1}^2 - 6\lambda_{2}\lambda_{0} - 4\lambda_{2}\wp_{13}^2
\\
\end{align*}
\begin{align*}
&\bm{(-18)} &
\wp_{113}\wp_{133} &= \textstyle 2\wp_{33}\wp_{13}\wp_{11} + 2\lambda_{2}\wp_{13}^2
+ 2\lambda_{0}\wp_{13} + 2\lambda_{0}\wp_{23}^2 + 2\wp_{13}^3  \\
& & &\quad \textstyle + \lambda_{1}\wp_{12}\wp_{33} - 2\wp_{33}\wp_{22}\lambda_{0}
- 2\lambda_{1}\wp_{13}\wp_{23}
\\
&\bm{(-18)} &
\wp_{112}\wp_{222} &= \textstyle 4\lambda_{2}\wp_{13}^2 + 2\wp_{23}\lambda_{3}\wp_{13}\lambda_{2}
- 2\lambda_{2}\wp_{23}\lambda_{1} + 4\lambda_{2}\wp_{11}\wp_{33} - \wp_{23}^2\lambda_{3}\lambda_{1} \\
& & &\quad \textstyle + 4\lambda_{2}\lambda_{0} - 2\lambda_{1}^2 + 8\lambda_{0}\wp_{23}^2
+ 2\wp_{11}\wp_{22}^2 + 2\wp_{22}\wp_{12}^2 + \frac{2}{3}\wp_{12}\lambda_{3}Q_{1333} \\
& & &\quad \textstyle - 2\lambda_{3}\wp_{33}\wp_{12}\lambda_{2}
+ 2\lambda_{3}\wp_{33}\wp_{22}\lambda_{1} + 4\wp_{13}\lambda_{2}^2
- 8\lambda_{3}\wp_{13}\lambda_{1} - \frac{8}{3}\wp_{11}Q_{1333}  \\
& & &\quad \textstyle - 8\lambda_{1}\wp_{13}\wp_{23} + 4\lambda_{1}\wp_{12}\wp_{33}
+ 8\lambda_{3}\wp_{23}\lambda_{0} - \frac{2}{3}\lambda_{2}\wp_{22}Q_{1333}
- 2\wp_{13}^2\lambda_{3}^2
\\
&\bm{(-19)} &
\wp_{113}\wp_{123} &= \textstyle  - 2\wp_{33}^2\lambda_{3}\lambda_{0}
+ 2\lambda_{2}\wp_{12}\wp_{13} + 2\wp_{12}\wp_{13}^2 + 2\wp_{12}\lambda_{0}
- 2\wp_{23}\wp_{22}\lambda_{0} \\
& & &\quad \textstyle + 2\wp_{13}\wp_{23}\wp_{11} + \frac{1}{3}\wp_{33}\lambda_{1}Q_{1333}
\\
&\bm{(-19)} &
\wp_{111}\wp_{233} &= \textstyle 2\wp_{12}\wp_{33}\wp_{11} - 2\wp_{12}\wp_{13}^2
+ 6\lambda_{1}\wp_{12}\lambda_{3} + 6\lambda_{1}\wp_{12}\wp_{23}  \\
& & &\quad \textstyle + 4\wp_{13}\wp_{23}\wp_{11} - 2\wp_{12}\lambda_{2}^2 - 2\wp_{11}\lambda_{1}
- 4\lambda_{2}\wp_{12}\wp_{13} + \lambda_{2}\wp_{22}\lambda_{1} \\
& & &\quad \textstyle - 2\lambda_{2}\wp_{23}\wp_{11} - 8\wp_{23}\wp_{22}\lambda_{0}
+ \wp_{13}\wp_{22}\lambda_{1} + 4\wp_{33}^2\lambda_{3}\lambda_{0} \\
& & &\quad \textstyle + 6\wp_{11}\wp_{13}\lambda_{3}  - 9\lambda_{3}\wp_{22}\lambda_{0}
+ 2\wp_{12}\lambda_{0} - \frac{2}{3}\wp_{33}\lambda_{1}Q_{1333}
\\
&\bm{(-19)} &
\wp_{112}\wp_{133} &= \textstyle 4\wp_{33}^2\lambda_{3}\lambda_{0} + 2\wp_{12}\wp_{33}\wp_{11}
+ 2\lambda_{2}\wp_{12}\wp_{13}  - 2\wp_{13}\wp_{22}\lambda_{1} + 2\wp_{12}\lambda_{0} \\
& & &\quad \textstyle + 2\wp_{12}\wp_{13}^2  + 4\wp_{23}\wp_{22}\lambda_{0}
- \frac{2}{3}\wp_{33}\lambda_{1}Q_{1333} - \lambda_{1}\wp_{12}\wp_{23}
\\
&\bm{(-20)} &
\wp_{113}\wp_{122} &= \textstyle 2\wp_{22}\wp_{13}\wp_{11} - 4\wp_{33}\lambda_{2}\lambda_{0}
+ 2\wp_{12}^2\wp_{13} - 4\wp_{22}^2\lambda_{0} + \wp_{33}\lambda_{1}^2 \\
& & &\quad \textstyle + 2\wp_{12}\wp_{22}\lambda_{1} - 4\wp_{23}\wp_{33}\lambda_{3}\lambda_{0}
+ \lambda_{1}\lambda_{3}\wp_{13}\wp_{33} - \frac{2}{3}\wp_{13}\lambda_{2}Q_{1333} \\
& & &\quad \textstyle + 8\wp_{13}\wp_{33}\lambda_{0} + \frac{4}{3}\lambda_{0}Q_{1333}
+ \frac{2}{3}\wp_{23}\lambda_{1}Q_{1333}
\\
&\bm{(-20)} &
\wp_{112}\wp_{123} &= \textstyle 8\wp_{33}\lambda_{2}\lambda_{0} + 2\wp_{12}^2\wp_{13}
+ 2\wp_{12}\wp_{23}\wp_{11} + 2\wp_{22}^2\lambda_{0} - 2\wp_{33}\lambda_{1}^2 \\
& & &\quad \textstyle + 2\lambda_{2}\wp_{12}^2 - 2\wp_{12}\wp_{22}\lambda_{1}
+ 2\wp_{23}\wp_{33}\lambda_{3}\lambda_{0} - \frac{8}{3}\lambda_{0}Q_{1333}
- \frac{1}{3}\wp_{23}\lambda_{1}Q_{1333}
\\
&\bm{(-20)} &
\wp_{111}\wp_{223} &= \textstyle  - 12\wp_{33}\lambda_{2}\lambda_{0} + 2\wp_{22}\wp_{13}\wp_{11}
+ 6\wp_{11}\lambda_{3}\wp_{12} - 4\wp_{11}^2 + 4\wp_{33}\lambda_{1}^2 \\
& & &\quad \textstyle - 4\wp_{22}^2\lambda_{0} - 2\lambda_{2}\wp_{11}\wp_{22}
- 8\wp_{13}\wp_{33}\lambda_{0}
- \frac{4}{3}\wp_{23}\lambda_{1}Q_{1333} \\
& & &\quad \textstyle - \lambda_{2}\wp_{33}\lambda_{3}\lambda_{1}
+ 8\wp_{23}\wp_{33}\lambda_{3}\lambda_{0} + \frac{2}{3}\wp_{13}\lambda_{2}Q_{1333}
+ \frac{2}{3}\lambda_{2}^2Q_{1333}   \\
& & &\quad \textstyle - 2\lambda_{2}\wp_{12}^2 - 2\wp_{12}^2\wp_{13} + 4\wp_{12}\wp_{23}\wp_{11}
- \lambda_{1}\lambda_{3}\wp_{13}\wp_{33}  \\
& & &\quad \textstyle - 2\lambda_{3}\lambda_{1}Q_{1333}
+ 4\wp_{12}\wp_{22}\lambda_{1} + \frac{4}{3}\lambda_{0}Q_{1333} + 9\wp_{33}\lambda_{3}^2\lambda_{0}
\\
&\bm{(-21)} &
\wp_{111}\wp_{222} &= \textstyle 6\lambda_{3}\lambda_{2}\lambda_{0}
- 6\lambda_{1}\lambda_{0} - 2\lambda_{3}\lambda_{1}^2 - 2\wp_{22}\lambda_{1}Q_{1333}
- 6\lambda_{3}^2\wp_{13}\lambda_{1} - 8\lambda_{3}\lambda_{0}\wp_{13} \\
& & &\quad \textstyle - 2\wp_{12}^3  + 2\wp_{12}\lambda_{2}Q_{1333}
+ 10\lambda_{3}\lambda_{0}\wp_{23}^2 + 4\wp_{23}\lambda_{2}^2\wp_{13}
- 2\lambda_{3}\wp_{11}Q_{1333}  \\
& & &\quad \textstyle + 6\wp_{11}\wp_{22}\wp_{12}  - 4\wp_{33}\wp_{12}\lambda_{2}^2
- 16\lambda_{0}\wp_{13}\wp_{23} + 9\lambda_{3}^2\wp_{23}\lambda_{0}
- 4\wp_{23}\lambda_{1}^2 \\
& & &\quad \textstyle - 2\lambda_{3}\lambda_{2}\wp_{13}^2 + 2\lambda_{3}\wp_{13}\lambda_{2}^2
+ 6\lambda_{2}\wp_{13}\lambda_{1} - 8\lambda_{0}\wp_{12}\wp_{33}
+ 10\lambda_{1}\wp_{13}^2 \\
& & &\quad \textstyle + 2\lambda_{3}\lambda_{2}\wp_{11}\wp_{33} - 2\wp_{23}^2\lambda_{2}\lambda_{1}
+ 4\lambda_{3}\lambda_{1}\wp_{12}\wp_{33} + 2\lambda_{2}\wp_{33}\wp_{22}\lambda_{1} \\
& & &\quad \textstyle - \lambda_{3}\lambda_{2}\wp_{23}\lambda_{1}  + 2\wp_{11}\wp_{33}\lambda_{1}
- 7\lambda_{3}\lambda_{1}\wp_{13}\wp_{23} + 2\lambda_{3}\lambda_{0}\wp_{22}\wp_{33}
\\
&\bm{(-21)} &
\wp_{112}\wp_{122} &= \textstyle 2\wp_{11}\wp_{22}\wp_{12} + 2\wp_{11}\wp_{33}\lambda_{1}
- 2\lambda_{1}\lambda_{0} - 8\lambda_{3}\lambda_{0}\wp_{13} + 2\wp_{12}^3 \\
& & &\quad \textstyle - 2\lambda_{3}\lambda_{0}\wp_{23}^2 + 8\lambda_{0}\wp_{12}\wp_{33}
+ 2\lambda_{3}\lambda_{0}\wp_{22}\wp_{33} + 2\lambda_{2}\wp_{13}\lambda_{1} \\
& & &\quad \textstyle + \lambda_{3}\lambda_{1}\wp_{13}\wp_{23} - \wp_{23}\lambda_{1}^2
- 2\lambda_{1}\wp_{13}^2 - \frac{2}{3}\wp_{12}\lambda_{2}Q_{1333}
\\
&\bm{(-22)} &
\wp_{113}^2 &= \textstyle 4\wp_{13}^2\wp_{11} - 4\lambda_{0}\wp_{13}\wp_{22}
- 4\lambda_{0}\wp_{12}\wp_{23} - 4\wp_{33}^2\lambda_{2}\lambda_{0} + \wp_{33}^2\lambda_{1}^2 \\
& & &\quad \textstyle + 4\lambda_{1}\wp_{12}\wp_{13} + \frac{4}{3}\wp_{33}\lambda_{0}Q_{1333}
\\
&\bm{(-22)} &
\wp_{111}\wp_{133} &= \textstyle 8\lambda_{0}\wp_{12}\wp_{23} - \lambda_{1}\wp_{12}\lambda_{2}
- 6\lambda_{2}\wp_{22}\lambda_{0} + 2\lambda_{2}\wp_{11}\wp_{13} + 2\wp_{22}\lambda_{1}^2 \\
& & &\quad \textstyle - 2\wp_{33}^2\lambda_{1}^2 - 6\lambda_{0}\wp_{11} + 2\wp_{33}\wp_{11}^2
+ 9\lambda_{0}\wp_{12}\lambda_{3} - \frac{8}{3}\wp_{33}\lambda_{0}Q_{1333} \\
& & &\quad \textstyle - \lambda_{1}\wp_{12}\wp_{13} - 2\lambda_{1}\wp_{11}\wp_{23}
+ 2\wp_{13}^2\wp_{11} - 4\lambda_{0}\wp_{13}\wp_{22}
+ 8\wp_{33}^2\lambda_{2}\lambda_{0}
\\
&\bm{(-23)} &
\wp_{112}\wp_{113} &= \textstyle - 4\wp_{23}\wp_{33}\lambda_{2}\lambda_{0} + 2\lambda_{1}\wp_{12}^2
+ \wp_{23}\wp_{33}\lambda_{1}^2 - 4\lambda_{0}\wp_{12}\wp_{22}  \\
& & &\quad \textstyle 4\wp_{12}\wp_{13}\wp_{11}  + \frac{4}{3}\wp_{23}\lambda_{0}Q_{1333}
- \frac{2}{3}\wp_{13}\lambda_{1}Q_{1333} + 4\lambda_{0}\lambda_{3}\wp_{13}\wp_{33}
\\
\end{align*}
\begin{align*}
&\bm{(-23)} &
\wp_{111}\wp_{123} &= \textstyle 2\wp_{12}\wp_{13}\wp_{11} - 2\wp_{33}\lambda_{3}\lambda_{1}^2
+ 2\lambda_{1}\wp_{33}\lambda_{0} - 2\lambda_{0}\lambda_{3}\wp_{13}\wp_{33} \\
& & &\quad \textstyle + 2\wp_{11}\wp_{12}\lambda_{2} - 2\wp_{11}\wp_{22}\lambda_{1}
+ 6\lambda_{2}\wp_{33}\lambda_{3}\lambda_{0} + 8\wp_{23}\wp_{33}\lambda_{2}\lambda_{0} \\
& & &\quad \textstyle + 2\lambda_{0}\wp_{12}\wp_{22} - 2\wp_{23}\wp_{33}\lambda_{1}^2
+ \frac{1}{3}\lambda_{2}\lambda_{1}Q_{1333} - 3\lambda_{3}\lambda_{0}Q_{1333} \\
& & &\quad \textstyle - \frac{8}{3}\wp_{23}\lambda_{0}Q_{1333} + \frac{1}{3}\wp_{13}\lambda_{1}Q_{1333}
+ 2\wp_{23}\wp_{11}^2
\\
&\bm{(-24)} &
\wp_{111}\wp_{122} &= \textstyle  - 3\lambda_{3}\lambda_{1}\lambda_{0} - \lambda_{2}\lambda_{1}^2
+ 4\lambda_{2}^2\lambda_{0} - 8\lambda_{0}^2 - \lambda_{1}\wp_{13}^2\lambda_{3}
- 8\lambda_{1}\lambda_{0}\wp_{23} - 2\wp_{23}^2\lambda_{1}^2  \\
& & &\quad \textstyle + 4\lambda_{2}\lambda_{0}\wp_{23}^2 - \frac{8}{3}\wp_{22}\lambda_{0}Q_{1333}
+ \frac{2}{3}\wp_{12}\lambda_{1}Q_{1333} - \frac{2}{3}\lambda_{2}\wp_{11}Q_{1333}
+ \lambda_{1}^2\wp_{13} \\
& & &\quad \textstyle + 8\lambda_{2}\lambda_{0}\wp_{13} - 9\lambda_{3}^2\lambda_{0}\wp_{13}
- 2\lambda_{3}\lambda_{1}^2\wp_{23} + 2\wp_{11}^2\wp_{22}
- 2\lambda_{2}\lambda_{1}\wp_{12}\wp_{33}  \\
& & &\quad \textstyle + 2\wp_{11}\wp_{12}^2 + 8\lambda_{0}\wp_{13}^2
+ \lambda_{2}\lambda_{3}\wp_{13}\lambda_{1} + 4\lambda_{3}\lambda_{0}\wp_{12}\wp_{33}
+ 2\lambda_{2}\lambda_{1}\wp_{13}\wp_{23} \\
& & &\quad \textstyle - 8\lambda_{3}\lambda_{0}\wp_{13}\wp_{23}
+ 4\lambda_{2}\wp_{33}\wp_{22}\lambda_{0} + 2\lambda_{3}\lambda_{1}\wp_{11}\wp_{33}
+ 6\lambda_{2}\lambda_{3}\wp_{23}\lambda_{0}
\\
&\bm{(-24)} &
\wp_{112}^2 &= \textstyle  - 8\lambda_{0}\wp_{13}^2 + 8\lambda_{0}\wp_{11}\wp_{33}
- 4\lambda_{1}\lambda_{0}\wp_{23} - 4\lambda_{2}\lambda_{0}\wp_{23}^2 - 8\lambda_{2}\lambda_{0}\wp_{13} \\
& & &\quad \textstyle + 4\wp_{11}\wp_{12}^2 - 8\lambda_{0}^2 + 4\lambda_{3}\lambda_{0}\wp_{13}\wp_{23}
+ 4\lambda_{3}\lambda_{0}\wp_{12}\wp_{33} + \wp_{23}^2\lambda_{1}^2 \\
& & &\quad \textstyle + 4\lambda_{1}^2\wp_{13}
+ \frac{4}{3}\wp_{22}\lambda_{0}Q_{1333} - \frac{4}{3}\wp_{12}\lambda_{1}Q_{1333}
\\
&\bm{(-26)} &
\wp_{111}\wp_{113} &= \textstyle 4\lambda_{0}\wp_{33}\wp_{13}\lambda_{2} + 4\wp_{11}^2\wp_{13}
- \wp_{33}\lambda_{2}\lambda_{1}^2 + 4\wp_{33}\lambda_{2}^2\lambda_{0} - \lambda_{1}^2\wp_{13}\wp_{33} \\
& & &\quad \textstyle - 3\wp_{33}\lambda_{3}\lambda_{1}\lambda_{0} + 2\wp_{12}\wp_{11}\lambda_{1}
- 6\wp_{22}\lambda_{0}\wp_{11} + 2\lambda_{0}\wp_{12}^2  \\
& & &\quad \textstyle - 2\lambda_{2}\lambda_{0}Q_{1333} - \frac{4}{3}\lambda_{0}\wp_{13}Q_{1333}
+ \frac{2}{3}\lambda_{1}^2Q_{1333} + 8\wp_{33}\lambda_{0}^2
\\
&\bm{(-27)} &
\wp_{111}\wp_{112} &= \textstyle  - \frac{4}{3}\wp_{12}\lambda_{0}Q_{1333}
+ 2\lambda_{3}\wp_{13}\lambda_{1}^2 + 8\lambda_{2}\lambda_{1}\lambda_{0}
+ \lambda_{1}^2\wp_{13}\wp_{23} - \frac{2}{3}\wp_{11}\lambda_{1}Q_{1333} \\
& & &\quad \textstyle + 6\lambda_{3}\wp_{33}\lambda_{0}\wp_{11} + 4\wp_{11}^2\wp_{12}
+ 4\wp_{23}\lambda_{2}^2\lambda_{0} - 18\lambda_{3}\lambda_{0}^2 - 2\lambda_{1}^3
- 16\wp_{23}\lambda_{0}^2 \\
& & &\quad \textstyle - 2\lambda_{3}\wp_{13}^2\lambda_{0}
- 6\lambda_{3}\wp_{13}\lambda_{2}\lambda_{0} - 3\lambda_{1}\lambda_{3}\wp_{23}\lambda_{0}
+ 2\lambda_{1}\wp_{33}\wp_{22}\lambda_{0} \\
& & &\quad \textstyle - \lambda_{2}\wp_{23}\lambda_{1}^2 - 2\lambda_{1}^2\wp_{12}\wp_{33}
- 2\lambda_{1}\lambda_{0}\wp_{23}^2 + 8\lambda_{1}\lambda_{0}\wp_{13}
+ 4\lambda_{0}\wp_{33}\wp_{12}\lambda_{2}
\\
&\bm{(-30)} &
\wp_{111}^2 &= \textstyle  - 4\wp_{23}\lambda_{1}^3 - 8\wp_{23}^2\lambda_{0}^2
- 8\wp_{13}\lambda_{2}^2\lambda_{0} - 4\lambda_{2}\lambda_{0}\wp_{13}^2
+ 16\wp_{23}\lambda_{2}\lambda_{1}\lambda_{0} \\
& & &\quad \textstyle + 4\wp_{23}\lambda_{1}\lambda_{0}\wp_{13}
+ 6\lambda_{3}\lambda_{1}\lambda_{0}\wp_{13} - 4\lambda_{0}\wp_{11}Q_{1333}
- 36\lambda_{3}\wp_{23}\lambda_{0}^2 \\
& & &\quad \textstyle + 18\lambda_{3}\lambda_{2}\lambda_{1}\lambda_{0} + \lambda_{2}^2\lambda_{1}^2
- 4\lambda_{3}\lambda_{1}^3 - 4\lambda_{2}^3\lambda_{0} - 4\lambda_{1}^2\lambda_{0}
- 27\lambda_{3}^2\lambda_{0}^2 \\
& & &\quad \textstyle + 12\lambda_{2}\lambda_{0}^2 + 2\wp_{13}\lambda_{2}\lambda_{1}^2
+ 16\wp_{13}\lambda_{0}^2 + \lambda_{1}^2\wp_{13}^2 - 4\wp_{12}\lambda_{1}\wp_{33}\lambda_{0} \\
& & &\quad \textstyle - 4\wp_{11}\wp_{33}\lambda_{1}^2 + 16\wp_{11}\wp_{33}\lambda_{2}\lambda_{0}
+ 4\wp_{11}^3 + 8\wp_{33}\wp_{22}\lambda_{0}^2
\end{align*}

\section{The three-term three-variable addition formula} \label{APP_34add}

The addition formula in Theorem \ref{thm:34_3t3v} is constructed using the following polynomials.

\begin{align*}
&P_{30}(\bu,\bv,\bw) = \textstyle
\frac{1}{8}\wp^{[2 2]}(\bu)\wp^{[1 1]}(\bv)\wp^{[2 2]}(\bw)
- \frac{1}{8}\wp_{112}(\bu)\wp^{[1 2]}(\bv)\partial_{3}Q_{1333}(\bw) \\
&\quad \textstyle - \frac{1}{48}\partial_{3}Q_{1333}(\bu)\wp^{[2 2]}(\bv)\partial_{3}Q_{1333}(\bw)
+ \frac{1}{8}\partial_{1}Q_{1333}(\bu)\wp^{[1 3]}(\bw)\wp_{133}(\bv) \\
&\quad \textstyle - \frac{3}{32}\wp_{112}(\bu)\wp_{112}(\bv)\wp^{[1 1]}(\bw)
- \frac{1}{32}\wp_{22}(\bv)\partial_{1}Q_{1333}(\bu)\partial_{1}Q_{1333}(\bw) \\
&\quad \textstyle + \frac{1}{24}\partial_{3}Q_{1333}(\bu)\wp^{[1 1]}(\bv)\wp_{111}(\bw)
+ \frac{3}{16}\wp_{112}(\bu)\wp^{[2 2]}(\bv)\wp_{222}(\bw)
+ \frac{3}{8}\wp^{[2 3]}(\bu)\wp_{22}(\bv)\wp^{[2 3]}(\bw) \\
&\quad \textstyle + \frac{1}{4}\wp_{333}(\bu)\wp_{111}(\bw)\wp^{[2 2]}(\bv)
+ \frac{1}{8}Q_{1333}(\bu)\wp^{[3 3]}(\bv)Q_{1333}(\bw)
+ \frac{3}{8}\wp_{112}(\bu)\wp_{13}(\bv)\wp_{112}(\bw) \\
&\quad \textstyle + \frac{1}{96}\partial_{2}Q_{1333}(\bu)\partial_{2}Q_{1333}(\bv)\wp^{[1 3]}(\bw)
- \frac{1}{8}\wp_{222}(\bu)\wp_{111}(\bv)\wp^{[1 2]}(\bw)\\
&\quad \textstyle + \frac{1}{16}\partial_{1}Q_{1333}(\bu)\wp_{12}(\bv)\partial_{2}Q_{1333}(\bw)
+ \frac{1}{8}\wp^{[1 3]}(\bu)\wp^{[1 3]}(\bv)\wp^{[1 3]}(\bw) \\
&\quad \textstyle + \frac{1}{8}\wp_{111}(\bu)\wp_{122}(\bw)\wp^{[1 1]}(\bv)
- \frac{3}{8}\wp^{[3 3]}(\bu)\wp_{123}(\bv)\wp_{123}(\bw)
+ \frac{3}{8}\wp^{[3 3]}(\bu)\wp_{223}(\bv)\wp_{113}(\bw) \\
&\quad \textstyle - \frac{3}{8}Q_{1333}(\bu)\wp_{113}(\bv)\wp_{113}(\bw)
- \frac{3}{16}\wp_{122}(\bu)\wp^{[2 2]}(\bw)\wp_{122}(\bv)
- \frac{3}{4}\wp^{[2 3]}(\bu)\wp_{12}(\bv)\wp^{[1 3]}(\bw) \\
&\quad \textstyle - \frac{1}{4}\partial_{1}Q_{1333}(\bu)\wp_{11}(\bv)\wp_{133}(\bw)
+ \wp^{[2 2]}(\bu)\wp^{[2 2]}(\bw)\wp_{13}(\bv)
+ \frac{3}{4}\wp^{[1 3]}(\bu)\wp_{11}(\bw)\wp^{[1 3]}(\bv) \\
\end{align*}
\begin{align*}
&\quad \textstyle + \frac{3}{8}\wp^{[3 3]}(\bu)\wp_{33}(\bw)\wp^{[3 3]}(\bv)
- \frac{1}{48}\partial_{2}Q_{1333}(\bu)\partial_{2}Q_{1333}(\bw)\wp_{11}(\bv) \\
&\quad \textstyle - \frac{1}{4}\wp^{[1 2]}(\bu)\wp^{[1 2]}(\bv)\wp^{[2 2]}(\bw)
- \frac{1}{6}\wp_{111}(\bv)\partial_{3}Q_{1333}(\bw)\wp_{13}(\bu)
+ \frac{1}{2}\wp_{11}(\bu)\wp_{11}(\bv)\wp_{11}(\bw) \\
&\quad \textstyle + \frac{1}{8}\wp^{[2 3]}(\bu)\partial_{2}Q_{1333}(\bv)\wp_{133}(\bw)
+ \frac{1}{4}\wp_{111}(\bu)\wp_{122}(\bw)\wp_{13}(\bv) - \frac{3}{8}\wp_{23}(\bu)\wp_{111}(\bw)\wp_{112}(\bv) \\
&\quad \textstyle - \frac{1}{8}\wp_{233}(\bu)\wp^{[2 3]}(\bv)\partial_{1}Q_{1333}(\bw)
+ \frac{3}{2}\wp^{[2 3]}(\bu)\wp_{11}(\bv)\wp_{12}(\bw) - \frac{1}{8}\wp_{111}(\bu)\wp_{111}(\bv)
\\ \\
&P_{27}(\bu,\bv,\bw) = \textstyle \Big[
\frac{3}{16}\wp_{12}(\bv)\partial_{1}Q_{1333}(\bu)\wp_{133}(\bw)
+ \frac{1}{8}\wp_{11}(\bw)\wp_{133}(\bv)d_{2}Q_{1333}(\bu) \\
&\quad \textstyle - \frac{1}{16}\wp^{[1 3]}(\bw)\wp_{133}(\bv)\partial_{2}Q_{1333}(\bu)
+ \frac{3}{8}\wp_{113}(\bw)\wp_{123}(\bv)Q_{1333}(\bu)
- \frac{3}{8}\wp^{[2 2]}(\bw)\wp^{[2 2]}(\bv)\wp_{23}(\bu) \\
&\quad \textstyle + \frac{1}{96}\partial_{3}Q_{1333}(\bw)\partial_{3}Q_{1333}(\bv)\wp^{[1 2]}(\bu)
- \frac{3}{16}\wp^{[2 2]}(\bv)\wp_{112}(\bw)\wp_{333}(\bu)
- \frac{3}{4}\wp^{[2 2]}(\bv)\wp^{[1 2]}(\bw)\wp_{13}(\bu) \\
&\quad \textstyle + \frac{9}{32}\wp_{23}(\bw)\wp_{112}(\bv)\wp_{112}(\bu)
+ \frac{1}{8}\wp^{[1 2]}(\bw)\wp^{[1 2]}(\bv)\wp^{[1 2]}(\bu)
+ \frac{3}{32}\wp_{122}(\bw)\wp_{122}(\bv)\wp^{[1 2]}(\bu)  \\
&\quad \textstyle - \frac{9}{16}\wp_{122}(\bv)\wp_{13}(\bw)\wp_{112}(\bu)
+ \frac{1}{16}\partial_{3}Q_{1333}(\bv)\wp^{[1 2]}(\bw)\wp_{122}(\bu)
- \frac{9}{8}\wp_{12}(\bu)\wp^{[2 3]}(\bw)\wp_{12}(\bv) \\
&\quad \textstyle - \frac{1}{8}\wp_{111}(\bv)\wp_{333}(\bw)\wp^{[1 2]}(\bu)
+ \frac{3}{16}\wp_{112}(\bv)\partial_{3}Q_{1333}(\bw)\wp_{13}(\bu) \Big]\lambda_{3} \\
\\
&P_{24}(\bu,\bv,\bw) = \textstyle \Big[
\frac{1}{4}\wp_{13}(\bu)\wp^{[1 1]}(\bv)\wp^{[2 2]}(\bw)
- \frac{1}{4}8\wp_{222}(\bu)\wp^{[1 2]}(\bv)\partial_{3}Q_{1333}(\bw) \\
&\quad \textstyle - \frac{1}{8}\partial_{1}Q_{1333}(\bu)\wp_{12}(\bv)\wp_{233}(\bw)
- \frac{3}{16}\wp_{122}(\bu)\wp_{112}(\bv)\wp_{23}(\bw) + \frac{3}{8}\wp^{[1 2]}(\bu)\wp_{333}(\bw)\wp_{112}(\bv) \\
&\quad \textstyle + \frac{3}{8}\wp_{12}(\bu)\wp_{12}(\bv)\wp^{[1 3]}(\bw)
+ \frac{3}{16}\wp_{222}(\bu)\wp_{112}(\bw)\wp_{13}(\bv)
+ \frac{3}{4}\wp^{[1 2]}(\bv)\wp_{23}(\bu)\wp^{[2 2]}(\bw)  \\
&\quad \textstyle - \frac{1}{6}\partial_{3}Q_{1333}(\bu)\wp_{122}(\bv)\wp_{13}(\bw)
- \frac{3}{8}\wp_{133}(\bu)\wp_{133}(\bv)\wp^{[1 3]}(\bw) + \frac{5}{16}\wp_{122}(\bu)\wp_{122}(\bv)\wp_{13}(\bw) \\
&\quad \textstyle - \frac{1}{16}\partial_{3}Q_{1333}(\bu)\wp_{112}(\bv)\wp_{23}(\bw)
+ \frac{3}{4}\wp_{133}(\bu)\wp_{133}(\bw)\wp_{11}(\bv) - \frac{1}{4}Q_{1333}(\bu)\wp_{123}(\bv)\wp_{123}(\bw) \\
&\quad \textstyle - \frac{1}{48}\wp^{[1 1]}(\bv)\partial_{3}Q_{1333}(\bw)\wp_{122}(\bu)
+ \frac{9}{8}\wp_{113}(\bu)\wp_{33}(\bw)\wp_{113}(\bv) + \frac{3}{4}\wp_{12}(\bv)\wp^{[2 3]}(\bw)\wp_{22}(\bu) \\
&\quad \textstyle - \frac{1}{4}\wp_{133}(\bu)\wp_{12}(\bv)\partial_{2}Q_{1333}(\bw)
- \frac{1}{16}\wp_{122}(\bu)\wp^{[1 2]}(\bv)\wp_{222}(\bw)
+ \frac{1}{4}\wp_{13}(\bv)\wp^{[1 2]}(\bu)\wp^{[1 2]}(\bw) \\
&\quad \textstyle - \frac{1}{4}\wp_{33}(\bu)\wp^{[3 3]}(\bw)Q_{1333}(\bv)
- \frac{1}{8}\wp_{223}(\bu)\wp_{113}(\bw)Q_{1333}(\bv)
- \frac{1}{32}\wp_{122}(\bu)\wp_{122}(\bv)\wp^{[1 1]}(\bw) \\
&\quad \textstyle - \frac{3}{4}\wp_{12}(\bu)\wp_{12}(\bv)\wp_{11}(\bw)
+ \frac{1}{4}\wp_{333}(\bu)\wp_{111}(\bv)\wp_{13}(\bw) - \frac{1}{8}\wp^{[1 1]}(\bv)\wp^{[1 2]}(\bu)\wp^{[1 2]}(\bw)  \\
&\quad \textstyle - \frac{1}{444}\wp_{13}(\bu)\partial_{3}Q_{1333}(\bv)\partial_{3}Q_{1333}(\bw)
- \frac{1}{288}\wp^{[1 1]}(\bu)\partial_{3}Q_{1333}(\bv)\partial_{3}Q_{1333}(\bw) \\
&\quad \textstyle + \frac{1}{24}\partial_{3}Q_{1333}(\bu)\wp_{333}(\bv)\wp^{[2 2]}(\bw)
- \frac{5}{2}\wp_{13}(\bu)\wp_{13}(\bv)\wp^{[2 2]}(\bw)
+ \frac{1}{8}\wp_{333}(\bu)\wp^{[2 2]}(\bv)\wp_{122}(\bw) \\
&\quad \textstyle
- \frac{1}{4}\wp^{[2 2]}(\bu)\wp^{[2 2]}(\bv)
+ \frac{3}{16}\wp_{112}(\bu)\wp_{112}(\bv)
\Big] \lambda_2
+  \Big[
- \frac{9}{32}\wp_{113}(\bv)\wp_{33}(\bu)\wp_{113}(\bw) \\
&\quad \textstyle
- \frac{3}{16}\wp_{133}(\bu)\wp_{133}(\bw)\wp_{11}(\bv)
+ \frac{3}{32}\wp_{133}(\bu)\wp_{133}(\bv)\wp^{[1 3]}(\bw)
+ \frac{9}{8}\wp_{13}(\bu)\wp_{13}(\bv)\wp^{[2 2]}(\bw)
\Big] \lambda_3^2 \\
\\
&P_{21}(\bu,\bv,\bw) = \textstyle
\Big[
\frac{3}{8}\wp_{233}(\bu)\wp^{[1 3]}(\bw)\wp_{133}(\bv)
- \frac{9}{4}\wp_{113}(\bu)\wp_{123}(\bw)\wp_{33}(\bv)
+ \frac{9}{8}\wp_{12}(\bu)\wp_{12}(\bv)\wp_{12}(\bw) \\
&\quad \textstyle + \frac{11}{4}\wp^{[2 2]}(\bu)\wp_{23}(\bv)\wp_{13}(\bw)
- \frac{1}{4}\wp^{[1 1]}(\bu)\wp_{13}(\bw)\wp^{[1 2]}(\bv)
- \frac{1}{2}\wp^{[2 2]}(\bu)\wp_{23}(\bv)\wp^{[1 1]}(\bw) \\
&\quad \textstyle - \frac{3}{8}\wp^{[2 3]}(\bu)\wp_{22}(\bv)\wp_{22}(\bw)
+ \frac{3}{32}\wp_{122}(\bu)\wp_{122}(\bw)\wp_{23}(\bv)
- \frac{3}{4}\wp^{[1 3]}(\bu)\wp_{22}(\bv)\wp_{12}(\bw) \\
&\quad \textstyle + \frac{1}{48}\partial_{3}Q_{1333}(\bu)\wp^{[1 1]}(\bw)\wp_{222}(\bv)
+ \frac{3}{16}\partial_{3}Q_{1333}(\bu)\wp_{23}(\bv)\wp_{122}(\bw) \\
&\quad \textstyle + \frac{1}{96}\partial_{3}Q_{1333}(\bu)\wp_{23}(\bw)\partial_{3}Q_{1333}(\bv)
- \frac{9}{16}\wp_{112}(\bv)\wp_{122}(\bw)
- \frac{3}{16}\wp^{[1 1]}(\bu)\wp_{112}(\bw)\wp_{333}(\bv) \\
&\quad \textstyle + \frac{1}{8}\wp_{22}(\bu)\partial_{2}Q_{1333}(\bw)\wp_{133}(\bv)
+ \frac{1}{8}\wp^{[1 1]}(\bu)\wp^{[1 1]}(\bw)\wp^{[1 2]}(\bv) \\
&\quad \textstyle - \frac{3}{4}\wp_{11}(\bu)\wp_{233}(\bw)\wp_{133}(\bv)
- \frac{1}{8}\wp_{333}(\bu)\wp_{111}(\bv)\wp_{23}(\bw)
- \frac{1}{16}\wp_{112}(\bu)\partial_{3}Q_{1333}(\bv) \\
&\quad \textstyle - \frac{5}{8}\wp_{23}(\bu)\wp^{[1 2]}(\bv)\wp^{[1 2]}(\bw)
+ \frac{1}{16}\wp_{233}(\bu)\partial_{1}Q_{1333}(\bw)\wp_{22}(\bv) \\
&\quad \textstyle - \frac{3}{8}\wp_{333}(\bu)\wp_{122}(\bv)\wp^{[1 2]}(\bw)
+ \frac{3}{16}\wp_{233}(\bu)\partial_{2}Q_{1333}(\bw)\wp_{12}(\bv)
+ \frac{5}{4}\wp^{[2 2]}(\bv)\wp^{[1 2]}(\bw) \\
&\quad \textstyle - \frac{3}{8}\wp_{112}(\bv)\wp_{13}(\bw)\wp_{333}(\bu)
- \frac{7}{16}\wp_{222}(\bu)\wp_{13}(\bw)\wp_{122}(\bv)
+ \frac{7}{2}\wp^{[1 2]}(\bu)\wp_{13}(\bv)\wp_{13}(\bw) \\
\end{align*}
\begin{align*}
&\quad \textstyle + \frac{3}{2}\wp_{22}(\bv)\wp_{11}(\bu)\wp_{12}(\bw)
+ \frac{1}{16}\wp_{222}(\bu)\wp^{[1 1]}(\bw)\wp_{122}(\bv)
- \frac{9}{8}\wp_{12}(\bu)\wp_{133}(\bv)\wp_{133}(\bw) \\
&\quad \textstyle - \frac{1}{16}\wp^{[2 2]}(\bu)\wp_{333}(\bv)\wp_{222}(\bw)
+ \frac{5}{48}\wp_{222}(\bu)\wp_{13}(\bw)\partial_{3}Q_{1333}(\bv) \\
&\quad \textstyle + \frac{3}{8}Q_{1333}(\bu)\wp_{123}(\bv)\wp_{223}(\bw)
+ \frac{1}{32}\wp_{222}(\bu)\wp_{222}(\bw)\wp^{[1 2]}(\bv)
\Big] \lambda_{1} \\
&\quad \textstyle -  \Big[ \frac{3}{4}\wp^{[2 2]}(\bu)\wp_{23}(\bv)\wp_{13}(\bw)
+ \frac{3}{4}\wp_{13}(\bu)\wp_{13}(\bv)\wp^{[1 2]}(\bw)
+ \frac{1}{16}\wp^{[1 2]}(\bu)\wp_{122}(\bw)\wp_{333}(\bv) \\
&\quad \textstyle + \frac{3}{16}\wp_{112}(\bv)\wp_{13}(\bu)\wp_{333}(\bw)
+ \frac{1}{48}\partial_{3}Q_{1333}(\bu)\wp_{333}(\bv)\wp^{[1 2]}(\bw)
\Big] \lambda_2\lambda_3 \\
\\
&P_{18}(\bu,\bv,\bw) = \textstyle
\Big[
- \frac{3}{4}\wp^{[1 2]}(\bv)\wp^{[1 2]}(\bw)
- \frac{9}{4}\wp^{[2 2]}(\bu)\wp^{[1 1]}(\bv)
+ \frac{9}{16}\wp_{112}(\bv)\wp_{222}(\bu) \\
&\quad \textstyle - 6\wp_{13}(\bu)\wp_{23}(\bv)\wp^{[1 2]}(\bw)
+ \wp_{333}(\bu)\wp_{13}(\bv)\wp_{122}(\bw)
+ 2\wp_{13}(\bu)\wp_{13}(\bv)\wp_{13}(\bw) \\
&\quad \textstyle - \frac{27}{8}\wp_{22}(\bu)\wp_{12}(\bv)\wp_{12}(\bw)
+ \frac{1}{3}\wp_{13}(\bu)\partial_{3}Q_{1333}(\bv)\wp_{333}(\bw)
+ \frac{9}{16}\wp_{122}(\bu)\wp_{122}(\bw) \\
&\quad \textstyle - \frac{1}{16}\partial_{3}Q_{1333}(\bu)\partial_{3}Q_{1333}(\bw)
+ \frac{9}{4}\wp_{233}(\bu)\wp_{133}(\bw)\wp_{12}(\bv)
- \frac{9}{4}\wp^{[2 2]}(\bu)\wp_{23}(\bv)\wp_{23}(\bw) \\
&\quad \textstyle - \frac{3}{8}\wp_{233}(\bu)\wp_{233}(\bw)\wp^{[1 3]}(\bv)
+ \frac{1}{4}\wp^{[2 2]}(\bu)\wp_{333}(\bv)\wp_{333}(\bw)
+ \frac{3}{4}\wp_{112}(\bu)\wp_{23}(\bw)\wp_{333}(\bv)\\
&\quad \textstyle - \frac{3}{2}\wp_{33}(\bu)\wp^{[3 3]}(\bv)\wp_{33}(\bw)
+ \frac{3}{4}\wp_{233}(\bu)\wp_{233}(\bv)\wp_{11}(\bw)
- \frac{3}{8}\wp_{223}(\bu)\wp_{223}(\bv)Q_{1333}(\bw) \\
&\quad \textstyle - \frac{3}{32}\wp^{[1 1]}(\bu)\wp_{222}(\bv)\wp_{222}(\bw)
+ \frac{3}{8}\wp_{13}(\bu)\wp_{222}(\bv)\wp_{222}(\bw)
+ 3\wp_{123}(\bu)\wp_{123}(\bv)\wp_{33}(\bw) \\
&\quad \textstyle + \frac{9}{8}\wp^{[1 3]}(\bu)\wp_{22}(\bv)\wp_{22}(\bw)
- \frac{3}{8}\wp_{222}(\bu)\wp_{23}(\bv)\partial_{3}Q_{1333}(\bw)
- \frac{9}{4}\wp_{11}(\bu)\wp_{22}(\bv)\wp_{22}(\bw) \\
&\quad \textstyle - 3\wp^{[1 1]}(\bu)\wp_{13}(\bv)\wp_{13}(\bw)
+ \frac{1}{2}\wp_{122}(\bu)\wp_{333}(\bv)\wp^{[1 1]}(\bw)
- \frac{1}{8}\wp^{[1 1]}(\bu)\wp^{[1 1]}(\bv)\wp^{[1 1]}(\bw) \\
&\quad \textstyle - \frac{1}{12}\wp^{[1 1]}(\bu)\wp_{333}(\bv)\partial_{3}Q_{1333}(\bw)
+ \frac{3}{2}\wp_{223}(\bu)\wp_{113}(\bv)\wp_{33}(\bw)  \\
&\quad \textstyle - \frac{3}{8}\wp_{233}(\bu)\wp_{22}(\bw)\partial_{2}Q_{1333}(\bv)
+ \frac{1}{2}Q_{1333}(\bu)Q_{1333}(\bw)\wp_{33}(\bv)
- \frac{1}{4}\wp_{111}(\bu)\wp_{333}(\bw) \\
&\quad \textstyle + \frac{9}{8}\wp_{133}(\bu)\wp_{133}(\bv)\wp_{22}(\bw)
+ \frac{1}{4}\wp^{[1 2]}(\bu)\wp_{333}(\bv)\wp_{222}(\bw)
+ \frac{3}{2}\wp^{[1 1]}(\bu)\wp_{23}(\bw)\wp^{[1 2]}(\bv)
\Big] \lambda_{0} \\
&\quad \textstyle +  \Big[
\frac{3}{8}\wp_{22}(\bu)\wp_{12}(\bv)\wp_{12}(\bw)
+ \wp_{13}(\bu)\wp_{13}(\bv)\wp_{13}(\bw)
- \frac{1}{24}\wp_{122}(\bu)\partial_{3}Q_{1333}(\bw) \\
&\quad \textstyle+ \frac{1}{24}Q_{1333}(\bu)Q_{1333}(\bv)\wp_{33}(\bw)
+ \frac{3}{4}\wp_{13}(\bu)\wp_{23}(\bv)\wp^{[1 2]}(\bw)
+ \frac{1}{4}\wp^{[1 2]}(\bu)\wp^{[1 2]}(\bw) \\
&\quad \textstyle+ \frac{1}{8}\wp_{333}(\bu)\wp_{13}(\bv)\wp_{122}(\bw)
+ \frac{1}{8}\wp_{13}(\bu)\wp^{[1 1]}(\bw)\wp_{13}(\bv)
+ \frac{1}{24}\wp_{13}(\bu)\partial_{3}Q_{1333}(\bv)\wp_{333}(\bw) \\
&\quad \textstyle- \frac{1}{2}\wp^{[2 2]}(\bu)\wp_{13}(\bv)
- \frac{1}{444}\partial_{3}Q_{1333}(\bu)\partial_{3}Q_{1333}(\bw)
- \frac{1}{16}\wp_{122}(\bu)\wp_{122}(\bw)
\Big] \lambda_2^2 \\
&\quad \textstyle +  \Big[
\frac{3}{4}\wp^{[2 2]}(\bu)\wp_{13}(\bv)
+ \frac{3}{16}\wp_{122}(\bu)\wp_{122}(\bv)
- \frac{1}{16}\wp_{333}(\bu)\wp^{[2 2]}(\bv)\wp_{333}(\bw)  \\
&\quad \textstyle- \frac{3}{4}\wp_{13}(\bu)\wp_{23}(\bv)\wp^{[1 2]}(\bw)
+ \frac{1}{8}\wp_{13}(\bu)\wp_{333}(\bw)\wp_{122}(\bv)
+ \frac{1}{8}\partial_{3}Q_{1333}(\bu)\wp_{122}(\bw) \\
&\quad \textstyle+ \frac{1}{48}\partial_{3}Q_{1333}(\bv)\partial_{3}Q_{1333}(\bw)
- \frac{1}{12}\partial_{3}Q_{1333}(\bu)\wp_{333}(\bw)\wp_{13}(\bv)
- \frac{3}{4}\wp^{[1 2]}(\bv)\wp^{[1 2]}(\bw) \\
&\quad \textstyle+ \frac{3}{8}\wp_{123}(\bv)\wp_{123}(\bw)\wp_{33}(\bu)
+ \frac{1}{16}\wp_{333}(\bu)\wp^{[1 2]}(\bw)\wp_{222}(\bv) \\
&\quad \textstyle- \frac{9}{4}\wp_{13}(\bu)\wp_{13}(\bv)\wp_{13}(\bw)
+ \frac{3}{16}\wp_{113}(\bu)\wp_{33}(\bv)\wp_{223}(\bw)
+ \frac{3}{8}\wp_{23}(\bu)\wp^{[2 2]}(\bw)\wp_{23}(\bv) \\
&\quad \textstyle+ \frac{3}{16}\wp_{23}(\bu)\wp_{112}(\bv)\wp_{333}(\bw)
- \frac{1}{8}\wp_{33}(\bu)Q_{1333}(\bv)Q_{1333}(\bw) \\
&\quad \textstyle+ \frac{9}{16}\wp_{233}(\bu)\wp_{133}(\bw)\wp_{12}(\bv)
- \frac{9}{8}\wp_{12}(\bu)\wp_{12}(\bv)\wp_{22}(\bw)
+ \frac{3}{8}\wp_{33}(\bu)\wp^{[3 3]}(\bw)\wp_{33}(\bv) \\
&\quad \textstyle+ \frac{1}{16}\wp_{333}(\bu)\wp_{122}(\bv)\wp^{[1 1]}(\bw)
+ \frac{1}{4}8\partial_{3}Q_{1333}(\bu)\wp_{333}(\bw)\wp^{[1 1]}(\bv)
\Big] \lambda_3\lambda_1 \\
\\
&P_{15}(\bu,\bv,\bw) = \textstyle
\Big[
- \frac{9}{4}\wp_{13}(\bu)\wp^{[1 2]}(\bw)
- \frac{9}{16}\wp_{222}(\bu)\wp_{122}(\bv)
- \frac{9}{8}\wp_{133}(\bu)\wp_{22}(\bw)\wp_{233}(\bv) \\
&\quad \textstyle + \frac{9}{8}\wp^{[1 2]}(\bu)\wp_{23}(\bv)\wp_{23}(\bw)
- \frac{1}{8}\wp_{333}(\bu)\wp_{333}(\bv)\wp^{[1 2]}(\bw)
+ \frac{9}{16}\wp_{333}(\bu)\wp_{112}(\bv) \\
&\quad \textstyle + \frac{9}{4}\wp^{[1 2]}(\bu)\wp^{[1 1]}(\bw)
- \frac{9}{8}\wp_{333}(\bu)\wp_{23}(\bw)\wp_{122}(\bv)
+ \frac{27}{8}\wp_{22}(\bu)\wp_{22}(\bv)\wp_{12}(\bw) \\
&\quad \textstyle- \frac{3}{16}\wp_{222}(\bu)\partial_{3}Q_{1333}(\bw)
+ \frac{9}{2}\wp_{13}(\bu)\wp_{13}(\bv)\wp_{23}(\bw)
- \frac{3}{8}\wp_{333}(\bu)\wp_{13}(\bw)\wp_{222}(\bv) \\
&\quad \textstyle- \frac{9}{8}\wp_{233}(\bu)\wp_{233}(\bv)\wp_{12}(\bw)
- \frac{9}{4}\wp_{33}(\bu)\wp_{223}(\bw)\wp_{123}(\bv) \\
\end{align*}
\begin{align*}
&\quad \textstyle- \frac{3}{16}\wp_{222}(\bu)\wp^{[1 1]}(\bv)\wp_{333}(\bw)
+ \frac{9}{4}\wp_{23}(\bu)\wp^{[1 1]}(\bw)\wp_{13}(\bv)
\Big] \lambda_3\lambda_0 \\
&\quad \textstyle +  \Big[
\frac{3}{4}\wp_{13}(\bu)\wp^{[1 2]}(\bw)
- \frac{1}{2}\wp^{[1 1]}(\bu)\wp_{13}(\bw)\wp_{23}(\bv)
- \frac{1}{16}\wp_{122}(\bu)\wp_{23}(\bw)\wp_{333}(\bv) \\
&\quad \textstyle + \frac{1}{4}\wp^{[2 2]}(\bu)\wp_{23}(\bv)
- \frac{1}{48}\wp_{23}(\bv)\wp_{333}(\bw)\partial_{3}Q_{1333}(\bu)
+ \frac{1}{16}\wp_{222}(\bu)\wp_{122}(\bv) \\
&\quad \textstyle- \frac{3}{8}\wp_{23}(\bu)\wp_{23}(\bv)\wp^{[1 2]}(\bw)
- \frac{7}{4}\wp_{13}(\bu)\wp_{13}(\bv)\wp_{23}(\bw)
- \frac{1}{16}\wp_{13}(\bv)\wp_{222}(\bw)\wp_{333}(\bu) \\
&\quad \textstyle - \frac{1}{4}\wp^{[1 2]}(\bu)\wp^{[1 1]}(\bv)
+ \frac{1}{48}\wp_{222}(\bv)\partial_{3}Q_{1333}(\bw)
- \frac{3}{8}\wp_{22}(\bu)\wp_{22}(\bv)\wp_{12}(\bw)
\Big] \lambda_2\lambda_1\\
&\quad \textstyle+  \Big[
\frac{9}{8}\wp_{13}(\bu)\wp_{13}(\bv)\wp_{23}(\bw)
+ \frac{1}{3}2\wp_{333}(\bu)\wp_{333}(\bw)\wp^{[1 2]}(\bv)
\Big] \lambda_3^2\lambda_1 \\
\\
&P_{12}(\bu,\bv,\bw) = \textstyle
\Big[
\frac{9}{2}\wp_{13}(\bv)\wp_{13}(\bw)
- \frac{3}{4}\wp^{[1 1]}(\bu)\wp^{[1 1]}(\bv)
- \frac{5}{4}\wp_{333}(\bu)\wp_{13}(\bw)\wp_{333}(\bv) \\
&\quad \textstyle + \frac{3}{16}\wp_{222}(\bu)\wp_{222}(\bv)
+ \frac{9}{4}\wp_{23}(\bu)\wp_{13}(\bw)\wp_{23}(\bv)
- \frac{9}{8}\wp_{22}(\bu)\wp_{22}(\bv)\wp_{22}(\bw) \\
&\quad \textstyle+ \frac{3}{8}\wp_{333}(\bu)\wp^{[1 1]}(\bw)\wp_{333}(\bv)
+ \frac{3}{4}\wp^{[1 1]}(\bw)\wp_{13}(\bu)
+ \frac{5}{8}\partial_{3}Q_{1333}(\bv)\wp_{333}(\bw) \\
&\quad \textstyle
+ \frac{3}{8}\wp_{122}(\bv)\wp_{333}(\bw)
- \frac{9}{8}\wp_{23}(\bu)\wp^{[1 1]}(\bv)\wp_{23}(\bw)
+ \frac{9}{8}\wp_{233}(\bu)\wp_{22}(\bw)\wp_{233}(\bv) \\
&\quad \textstyle- \frac{3}{4}\wp_{23}(\bu)\wp^{[1 2]}(\bw)
+ \frac{9}{8}\wp_{333}(\bu)\wp_{222}(\bv)\wp_{23}(\bw)
+ \frac{9}{8}\wp_{223}(\bu)\wp_{33}(\bw)\wp_{223}(\bv) \\
&\quad \textstyle
- 4\wp_{33}(\bu)\wp_{33}(\bv)Q_{1333}(\bw)
\Big] \lambda_2\lambda_0 +  \Big[
\frac{9}{8}\wp_{33}(\bu)\wp_{33}(\bw)Q_{1333}(\bv)
- \frac{9}{16}\wp_{122}(\bu)\wp_{333}(\bw) \\
&\quad \textstyle - \frac{3}{16}\partial_{3}Q_{1333}(\bv)\wp_{333}(\bw)
- \frac{27}{8}\wp_{23}(\bu)\wp_{13}(\bw)\wp_{23}(\bv) \\
&\quad \textstyle- \frac{3}{32}\wp_{333}(\bu)\wp^{[1 1]}(\bw)\wp_{333}(\bv)
+ \frac{3}{8}\wp_{333}(\bu)\wp_{13}(\bw)\wp_{333}(\bv)
\Big] \lambda_3^2\lambda_0\\
&\quad \textstyle+  \Big[
- \frac{9}{32}\wp_{33}(\bv)\wp_{223}(\bu)\wp_{223}(\bw)
- \frac{1}{16}\wp_{222}(\bu)\wp_{222}(\bw) - \frac{25}{8}\wp_{13}(\bv)\wp_{13}(\bw) \\
&\quad \textstyle+ \frac{5}{8}\wp_{23}(\bu)\wp_{13}(\bv)\wp_{23}(\bw)
- \frac{3}{32}\wp_{333}(\bu)\wp^{[1 1]}(\bv)\wp_{333}(\bw)
+ \frac{3}{8}\wp_{333}(\bu)\wp_{13}(\bv)\wp_{333}(\bw) \\
&\quad \textstyle- \frac{3}{16}\wp_{122}(\bv)\wp_{333}(\bw) + \frac{1}{8}\wp^{[2 2]}(\bw)
- \frac{9}{32}\wp_{22}(\bu)\wp_{233}(\bv)\wp_{233}(\bw) \\
&\quad \textstyle+ \frac{9}{8}\wp_{33}(\bu)\wp_{33}(\bw)Q_{1333}(\bv)
- \frac{3}{16}\partial_{3}Q_{1333}(\bv)\wp_{333}(\bw)
+ \frac{3}{8}\wp_{22}(\bu)\wp_{22}(\bv)\wp_{22}(\bw) \\
&\quad \textstyle+ \frac{1}{2}\wp_{23}(\bv)\wp^{[1 1]}(\bu)\wp_{23}(\bw)
+ \frac{1}{4}\wp^{[1 1]}(\bv)\wp^{[1 1]}(\bw)
- \frac{1}{2}\wp^{[1 1]}(\bv)\wp_{13}(\bw) \\
&\quad \textstyle - \frac{1}{4}\wp_{23}(\bu)\wp_{222}(\bv)\wp_{333}(\bw)
- \frac{1}{4}\wp_{23}(\bu)\wp^{[1 2]}(\bv)
\Big] \lambda_1^2
+  \Big[
\frac{3}{4}\wp_{13}(\bv)\wp_{13}(\bw) \\
&\quad \textstyle+ \frac{1}{48}\wp_{333}(\bu)\partial_{3}Q_{1333}(\bv)
- \frac{1}{8}\wp_{33}(\bu)\wp_{33}(\bw)Q_{1333}(\bv)
+ \frac{1}{16}\wp_{122}(\bu)\wp_{333}(\bw) \\
&\quad \textstyle + \frac{3}{8}\wp_{23}(\bu)\wp_{13}(\bw)\wp_{23}(\bv)
- \frac{1}{16}\wp_{333}(\bu)\wp_{13}(\bw)\wp_{333}(\bv)
\Big] \lambda_3\lambda_2\lambda_1
-  \frac{1}{4}\wp_{13}(\bv)\wp_{13}(\bw)\lambda_{2}^3 \\
\\
&P_{9}(\bu,\bv,\bw) = \textstyle
\frac{1}{32}\wp_{23}(\bu)\wp_{333}(\bv)\wp_{333}(\bw)\lambda_{3}\lambda_{1}^2
- \frac{3}{8}\wp_{23}(\bu)\wp_{23}(\bv)\wp_{23}(\bw)\lambda_{3}\lambda_{1}^2\\
&\quad \textstyle - \frac{9}{2}\wp_{13}(\bv)\wp_{23}(\bw)\lambda_{3}\lambda_{2}\lambda_{0}
- \frac{9}{8}\wp_{23}(\bu)\wp_{23}(\bv)\wp_{23}(\bw)\lambda_{1}\lambda_{0}
- \frac{1}{8}\wp_{222}(\bv)\wp_{333}(\bw)\lambda_{3}\lambda_{1}^2 \\
&\quad \textstyle- \frac{1}{8}\wp_{23}(\bu)\wp_{333}(\bv)\wp_{333}(\bw)\lambda_{1}\lambda_{0}
+ \frac{3}{8}\wp_{333}(\bu)\wp_{222}(\bv)\lambda_{3}\lambda_{2}\lambda_{0}
- \frac{3}{4}\wp^{[1 2]}(\bv)\lambda_{1}\lambda_{0} \\
&\quad \textstyle+ \frac{3}{4}\wp^{[1 1]}(\bv)\wp_{23}(\bw)\lambda_{1}\lambda_{0}
+ \frac{9}{8}\wp_{23}(\bu)\wp_{23}(\bv)\wp_{23}(\bw)\lambda_{3}\lambda_{2}\lambda_{0}
+ \frac{3}{4}\wp_{13}(\bw)\wp_{23}(\bu)\lambda_{3}\lambda_{1}^2 \\
&\quad \textstyle + \frac{1}{4}\wp_{13}(\bu)\wp_{23}(\bv)\lambda_{2}^2\lambda_{1}
+ \frac{33}{4}\wp_{13}(\bw)\wp_{23}(\bv)\lambda_{1}\lambda_{0}
+ \frac{3}{16}\wp_{222}(\bu)\wp_{333}(\bv)\lambda_{1}\lambda_{0} \\
\\
&P_{6}(\bu,\bv,\bw) = \textstyle 9\wp_{13}(\bw)\lambda_{0}^2
+ \frac{3}{8}\wp_{33}(\bu)\wp_{33}(\bv)\wp_{33}(\bw)\lambda_{3}^2\lambda_{1}^2
+ \frac{3}{16}\wp_{333}(\bv)\wp_{333}(\bw)\lambda_{3}^2\lambda_{2}\lambda_{0}\\
&\quad \textstyle + \frac{9}{8}\wp^{[1 1]}(\bu)\lambda_{0}^2
+ \frac{9}{2}\wp_{33}(\bu)\wp_{33}(\bv)\wp_{33}(\bw)\lambda_{2}^2\lambda_{0}
+ \frac{9}{16}\wp_{333}(\bv)\wp_{333}(\bw)\lambda_{3}\lambda_{1}\lambda_{0}\\
&\quad \textstyle- \frac{9}{8}\wp_{23}(\bu)\wp_{23}(\bv)\lambda_{3}\lambda_{1}\lambda_{0}
- \frac{27}{4}\wp_{23}(\bv)\wp_{23}(\bw)\lambda_{0}^2
- \frac{3}{2}\wp_{13}(\bw)\lambda_{2}^2\lambda_{0} \\
&\quad \textstyle- \frac{3}{4}\wp_{333}(\bu)\wp_{333}(\bv)\lambda_{2}^2\lambda_{0}
+ \frac{1}{2}\wp_{13}(\bw)\lambda_{2}\lambda_{1}^2
- \frac{5}{4}\wp_{333}(\bv)\wp_{333}(\bw)\lambda_{0}^2 \\
&\quad \textstyle+ 6\wp_{33}(\bu)\wp_{33}(\bv)\wp_{33}(\bw)\lambda_{0}^2
- \frac{9}{8}\wp_{33}(\bu)\wp_{33}(\bv)\wp_{33}(\bw)\lambda_{2}\lambda_{1}^2
+ \frac{9}{4}\wp_{23}(\bv)\wp_{23}(\bw)\lambda_{2}^2\lambda_{0} \\
&\quad \textstyle- \frac{9}{8}\wp_{33}(\bu)\wp_{33}(\bv)\wp_{33}(\bw)\lambda_{3}^2\lambda_{2}\lambda_{0}
- 3\wp_{33}(\bu)\wp_{33}(\bv)\wp_{33}(\bw)\lambda_{3}\lambda_{1}\lambda_{0}\\
&\quad \textstyle - \frac{1}{16}\wp_{333}(\bu)\wp_{333}(\bv)\lambda_{3}^2\lambda_{1}^2
- \frac{5}{8}\wp_{23}(\bu)\wp_{23}(\bv)\lambda_{2}\lambda_{1}^2
+ \frac{3}{16}\wp_{333}(\bu)\wp_{333}(\bv)\lambda_{2}\lambda_{1}^2 \\
\end{align*}

\begin{align*}
&P_{3}(\bu,\bv,\bw) = \textstyle
- \frac{11}{8}\wp_{23}(\bw)\lambda_{1}^3
- \frac{81}{8}\wp_{23}(\bv)\lambda_{3}\lambda_{0}^2
+ \frac{21}{4}\wp_{23}(\bw)\lambda_{2}\lambda_{1}\lambda_{0} \\
\\
&P_{0}(\bu,\bv,\bw) = \textstyle
\frac{9}{4}\lambda_{3}\lambda_{2}\lambda_{1}\lambda_{0}
+ \frac{1}{8}\lambda_{2}^2\lambda_{1}^2
- \frac{1}{2}\lambda_{3}\lambda_{1}^3
- \frac{1}{2}\lambda_{2}^3\lambda_{0}
- \frac{3}{4}\lambda_{1}^2\lambda_{0}
- \frac{27}{8}\lambda_{3}^2\lambda_{0}^2
+ \frac{9}{4}\lambda_{2}\lambda_{0}^2
\end{align*}

\subsection*{Acknowledgments}

It is a pleasure to thank to Chris Athorne, Victor Enolski, John
Gibbons, Shigeki Matsutani and Atsushi Nakayashiki for useful
conversations.  Part of the work was done while Y.O. was visiting Heriot-Watt University, under the financial support of the Edinburgh Mathematical Society and JSPS grant-in-aid for scientific research no. 19540002.

\bibliography{Genus_Three}{}
\bibliographystyle{plain}

\subsection*{Authors}

\noindent J.~C.~Eilbeck \\
Department of Mathematics and the Maxwell Institute for Mathematical Sciences \\
Heriot-Watt University, Edinburgh \\ EH14 4AS, UK \\
J.C.Eilbeck@hw.ac.uk

\bigskip

\noindent M.~England [\textbf{Corresponding Author}] \\
School of Mathematics and Statistics, \\ University of Glasgow, \\
Glasgow G12 8QQ, UK. \\
Matthew.England@glasgow.ac.uk

\bigskip

\noindent Y.~\^Onishi \\
4-3-11, Takeda, Kofu, 400-8511, \\
Faculty of Education and Human Sciences, \\
University of Yamanashi, Japan. \\
yonishi@yamanashi.ac.jp

\end{document}